\definecolor{trp}{rgb}{1,1,1}
\definecolor{red}{rgb}{1,0,.2}
\definecolor{myred1}{RGB}{255, 0, 0}
\definecolor{myyellow1}{RGB}{255, 255, 219}
\definecolor{mygreen1}{RGB}{0, 255, 0}
\definecolor{mygreen2}{RGB}{0, 126, 0}
\definecolor{myblue1}{RGB}{0, 0, 255}
\theoremstyle{plain}
\newtheorem{theorem}{Theorem}[section]
\newtheorem{claim}[theorem]{Claim}
\newtheorem{fact}[theorem]{Fact}
\newtheorem{corollary}[theorem]{Corollary}
\newtheorem{definition}[theorem]{Definition}
\newtheorem{example}[theorem]{Example}
\newtheorem{lemma}[theorem]{Lemma}
\newtheorem{PA}[theorem]{Principal Assumption}
\newtheorem{proposition}[theorem]{Proposition}
\numberwithin{equation}{section}
\newcommand*{\ind}{\mathbbm{1}}
\newcommand*{\arabicdec}[1]{\the\numexpr\value{#1}\relax}
\definecolor{blue}{rgb}{0,0,1}
\definecolor{Buff}{rgb}{0.94, 0.86, 0.51}
\definecolor{red}{rgb}{1,0,.7}
\newcommand{\sci}{\subset}
\newcommand{\set}[1]{\{#1\}}
\newcommand{\ga}{\alpha}
\newcommand{\gb}{\beta}
\newcommand{\gk}{\kappa}
\newcommand{\tit}{\textit}
\newcommand{\C}[1]{\mathcal{#1}}
\newcommand{\D}[1]{\mathbb{#1}}
\newcommand{\te}{\text}
\newcommand{\ol}{\overline}
\newcommand{\ul}{\underline}
\begin{document}
\title[]{Quantization dimension for self-similar measures of overlapping construction}

\author{MRINAL KANTI ROYCHOWDHURY}
\address{ Mrinal Kanti Roychowdhury, University of Texas Rio Grande Valley,
1201 West University Drive, Edinburg, TX 78539-2999, USA} \email{mrinal.roychowdhury@utrgv.edu}

\author{K\'aroly Simon}
\address{K\'aroly Simon, Budapest University of Technology and Economics, MTA-BME Stochastics Research Group, P.O. Box 91, 1521 Budapest, Hungary} \email{simonk@math.bme.hu}

\subjclass[2000]{Primary 28A80; Secondary  37A50, 94A15, 60D05}
\keywords{Quantization dimension, self-similar measure, Weak Separation Property.}
\maketitle
\pagestyle{myheadings}\markboth{Mrinal Kanti Roychowdhury and Karoly Simon}{Quantization dimension for self-similar measures of overlapping construction}

\begin{abstract}
  Quantization dimension has been computed for many invariant measures of dynamically defined fractals having well separated cylinders, that is, in the cases when the so-called Open Set Condition (OSC) holds. To attack the same problem in case of heavy overlaps between the cylinders, we consider a
  family of self-similar measures, for which the underlying Iterated Function System satisfies
   the so-called Weak Separation Property (WSP) but does not satisfy the OSC since complete overlaps occur in between the cylinders.
   The work in this paper also shows that the quantization dimension determined for the set of overlap self-similar construction satisfying the WSP has a relationship with the temperature function of the thermodynamic formalism.
\end{abstract}

\section{Introduction}

The basic goal of quantization for probability distribution is to reduce the number of values, which is typically uncountable, describing a probability distribution to some finite set and thus approximation of a continuous probability distribution by a discrete distribution. It has broad applications in signal processing, telecommunications, data compression, image processing and cluster analysis.  Over the years, many useful theorems have been proved and numerous other results and algorithms have been obtained in quantization.  For a detailed survey on the subject and comprehensive lists of references to the literature one is referred to \cite{B, BW, G, GG, GKL, GL1, GL2, GL4, GN, Z1, Z2}. Rigorous mathematical treatment of the fundamentals of the quantization theory is provided in Graf-Luschgy's book (see \cite{GL1}). In general, these theorems almost exclusively involve absolutely continuous probability measures on $\D R^d$. Two main goals of the theory are: $(1)$ finding the exact configuration of a so-called `$n$-optimal set' which corresponds to the support of the quantized version of the distribution, and $(2)$ estimating the rate at which some specified measure of the error goes to zero as $n$ goes to infinity. This paper deals with the second kind of problem.

Given a Borel
probability measure $\mu$ on $\D R^d$, a number $r \in (0, +\infty)$
and a natural number $n \in \D N$, the $n$th \tit{quantization
error} of order $r$ for $\mu$, is defined by
\begin{equation*}\label{eq0}  V_{n, r}:=V_{n,r}(\mu)=\te{inf}\left\{\int d(x, \ga)^r d
\mu(x): \ga \sci \D R^d, \, 1\leq \te{card}(\ga) \leq n\right\},\end{equation*}
where $d(x, \ga)$ denotes the distance from the point $x$ to the set
$\ga$ with respect to a given norm $\|\cdot\|$ on $\D R^d$. If $\ga$ is a finite set, the error $\int d(x, \ga)^r d
\mu(x) $ is often referred to as the \tit{cost,} or \tit{distortion error} of order $r$ for $\mu$ and $\ga$. It is known that for a Borel probability measure $\mu$,  if its support contains infinitely many elements and $\int \| x\|^r d\mu(x)$ is finite, then an optimal set of $n$-means always has exactly $n$-elements (see \cite{AW, GKL, GL, GL1}). This set $\ga$ can then be used to give a best approximation of
$\mu$ by a discrete probability supported on a set with no more than
$n$ points.  Such a set $\ga$ for which the infimum occurs and contains no more than $n$ points is called an \tit{optimal set of $n$-means}, or \tit{optimal set of $n$-quantizers} (of order $r$). Under suitable conditions this can be done by giving
each point $a \in \ga $ a mass corresponding to $\mu(A_a)$, where
$A_a$ is the set of points $x \in \D R^d$ such that $d(x, \ga)=d(x,
a)$. So, $\set{A_a : a \in \ga}$ is the `Voronoi' partition of $\D R^d$ induced by $\ga$. Of course, the idea of `best approximation' is, in general,
dependent on the choice of $r$.
For some recent work in the direction of optimal sets of $n$-means, one can see \cite{CR, DR1, DR2, GL5, R, R1, R2, R3, R4, R5, R6, RR1}. The set of all optimal sets of $n$-means for a Borel probability measure $\mu$ is denoted by $\C C_{n,r}(\mu)$, i.e.,
\begin{equation} \label{a85}
\C C_{n,r}(\mu):=\set{\ga \sci \D R^d : 1\leq \te{card}(\ga) \leq n \te{ and } V_{n, r}(\mu)=\int d(x, \ga)^r d
\mu(x)}.
\end{equation}
Write $e_{n, r}(\mu):=V_{n, r}^{\frac 1r}(\mu)$.
The numbers
\begin{equation} \label{eq55} \ul D_r(\mu):=\liminf_{n\to \infty}  \frac{\log n}{-\log e_{n,r}(\mu)}, \te{ and } \ol D_r(\mu):=\limsup_{n\to \infty} \frac{\log n}{-\log e_{n, r}(\mu)}, \end{equation}
are called the \tit{lower} and the \tit{upper quantization dimensions} of $\mu$ of order $r$, respectively. If $\ul D_r (\mu)=\ol D_r (\mu)$, the common value is called the \tit{quantization dimension} of $\mu$ of order $r$ and is denoted by $D_r(\mu)$. Quantization dimension measures the speed at which the specified measure of the error goes to zero as $n$ tends to infinity.
For any $\gk>0$, the numbers $\liminf_n n^{\frac r \gk}  V_{n, r}(\mu)$ and $\limsup_n  n^{\frac r \gk}V_{n, r}(\mu)$ are called the \tit{$\gk$-dimensional lower} and \tit{upper quantization coefficients} for $\mu$, respectively. The quantization coefficients provide us with more accurate information about the asymptotics of the quantization error than the quantization dimension. Compared to the calculation of quantization dimension, it is usually much more difficult to determine whether the lower and the upper quantization coefficients are finite and positive. It follows from \cite[Proposition~11.3]{GL1} that if
\begin{equation}\label{w86}
0\leq t<\ul D_r<s, \te{ then }
\lim_{n\to \infty} n e^t_{n,r}=+\infty \te{ and } \liminf_{n\to \infty} n e^s_{n, r}=0,
\end{equation}
 and if
\begin{equation}\label{y97}
0\leq t<\ol D_r<s, \te{ then }
\limsup_{n\to \infty} n e^t_{n,r}=+\infty \te{ and } \lim_{n\to \infty} n e^s_{n, r}=0.
\end{equation}

For probabilities with non-vanishing absolutely continuous parts the numbers $D_r$ are all equal to the dimension $d$ of the underlying space, but for singular probabilities the family $\{D_r\}_{r>0}$ gives an interesting description of their geometric (multifractal) structures. A detailed account of this theory can be found in \cite{GL1}. There the quantization dimension is introduced as a new type of fractal dimension spectrum and a formula for its determination is derived in the case of self-similar probabilities with the strong separation property. In \cite{GL2}, Graf and Luschgy extended the above result and determined the quantization dimension $D_r$ of self-similar probabilities with the weaker open set condition, but there it remained open whether the $D_r$-dimensional lower quantization coefficient is positive. Later they answered it in \cite{GL3}. Under the open set condition, Lindsay and
Mauldin (see \cite{LM}) determined the quantization dimension function $D_r$, where $r\in (0, +\infty)$, of an $F$-conformal measure $m$ associated
with a conformal iterated function system determined by finitely
many conformal mappings. Subsequently, quantization
dimension has been computed for many invariant measures of
dynamically defined fractals having well separated cylinders (that is the
cases when the so-called Open Set Condition holds), for example, one can see \cite{MR, R7, R8, R9, R10, R11, R12, R13, R14, R15}.
From all the known results it can be seen that if the quantization dimension function $D_r(\mu)$ for $r>0$ of a fractal probability measure $\mu$ exists, it has a relationship with the temperature function of the thermodynamic formalism that arises in multifractal analysis of $\mu$ (see Figure~\ref{Fig0}).

In a very recent paper,
among many other interesting applications, S. Zhu \cite{Zhu} has solved the problem of the computation of quantization dimension for the complete overlapping case in the following very special situation: Let $\mathcal{S}=\{S_k\}_{k\leq m}$
be a self-similar IFS with the following properties: $(1)$ there are distinct $i$ and $j$ with $S_i=S_j$. $(2)$ The self-similar IFS $\mathcal{S}=\{S_k\}_{k\leq m, k\ne j}$ satisfies the so-called strong separation property. That is, for any two distinct $u,v\in\{1,\dots,m\}\setminus\{j\}$, $S_u(\Lambda)\cap S_v(\Lambda)=\emptyset$, where $\Lambda$ is the attractor of the IFSs $\mathcal{S}$.
In general, if we are given a self-similar IFS which satisfies the so-called Weak Separation Property (WSP), but does not satisfy the OSC we cannot get rid of the overlapping feature of the system just by throwing out some of the mappings of the IFS. Even if we get rid of one of the mappings which causes the total overlapping feature, we still have overlaps in the remaining system (which are not complete overlaps).

In this paper, we make a step towards our goal to determine the quantization dimension for self-similar measures on the line in the case when the underlying self-similar system satisfies the WSP (for the definition and basic properties see \cite{Zer}).
Namely, we solve this problem for a special family which has the above mentioned properties.  Our work also shows that the quantization dimension determined for a set of overlap self-similar construction satisfying the WSP has a relationship with the temperature function of the thermodynamic formalism.

Recently, Kesseb\"{o}hmer et al. \cite[Corollary 1.12]{KNZ} proved  that the quantization dimension $D_r$ exists for every self-conformal measure, and it is determined by the intersection point of the $L^q$-spectrum $\beta (q)$ of the measure and the line through the origin with slope $r$ as indicated by Figure \ref{Fig0}. In our paper, using a completely different technique for an IFS we also calculated the quantization dimension. Since the $L^q$-spectrum
for the self-similar measure studied in our paper has not been described explicitly before, our result is different and has its importance because of the different techniques of the work. Moreover, the combination of \cite[Corollary 1.2]{KNZ} and our main result Theorem \ref{x70} yields an explicit formula
(see Corollary \ref{r97})
for the $L^q$-spectrum
$\beta (q)$, $q\in(0,1)$ for the self-similar measure with overlaps studied in this paper.

\begin{figure}
\vspace{-1.4 in}
\centerline{\includegraphics[width=8 in, height=4.2 in]{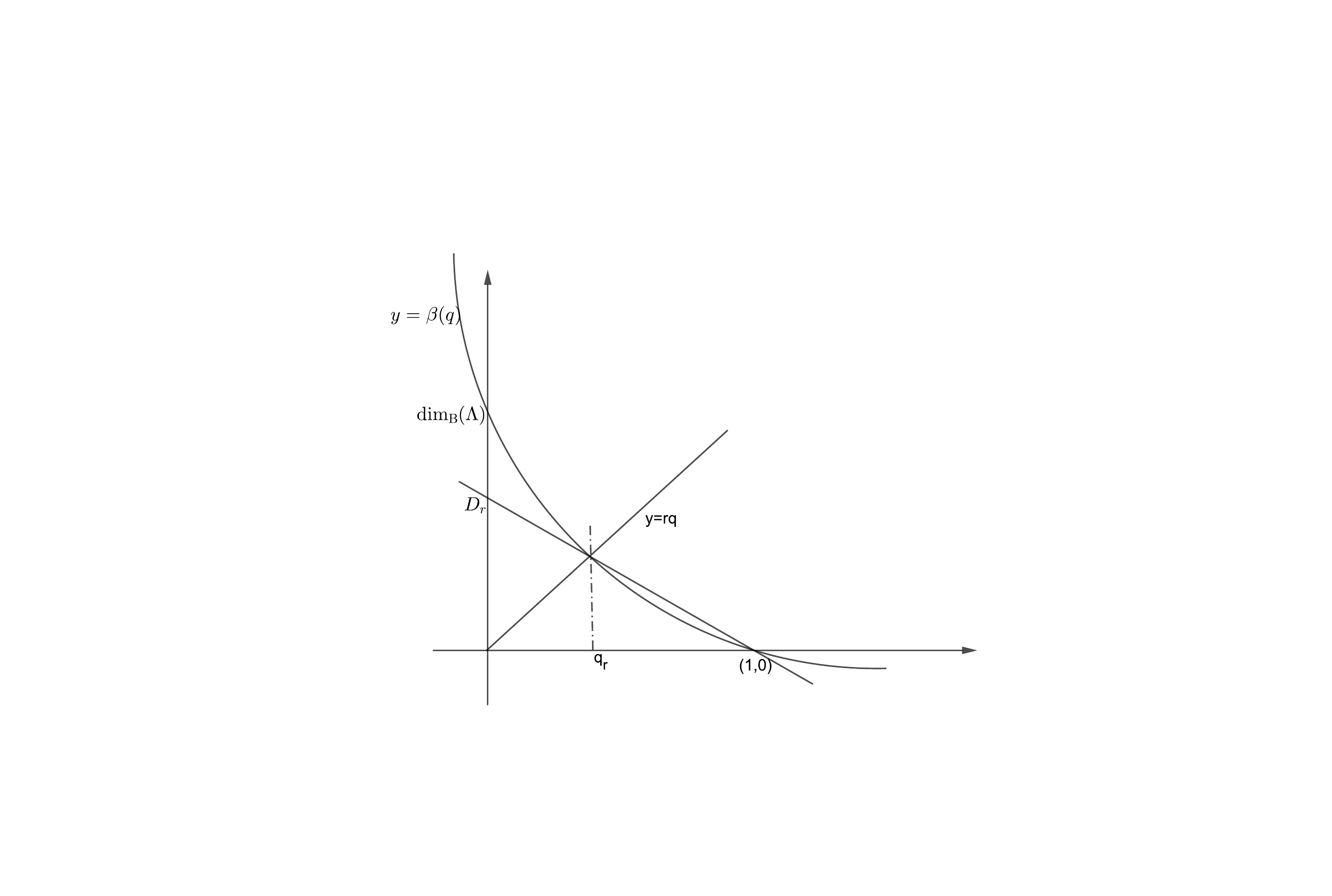}}
\vspace{-1. in}
\caption {\footnotesize{To determine $D_r$ first find the point of intersection of $y=\gb(q)$ and the line $y=rq$. Then, $D_r$ is the $y$-intercept of the line through this point and the point $(1, 0)$.}}  \label{Fig0}
\vspace{-0.4 in}
\end{figure}

\section{An overlapping self-similar IFS on the line}

We consider the following self-similar IFS on $\mathbb{R}$
\begin{equation}\label{xx81}
  \mathcal{S}=\left\{S_i(x)=\frac{1}{3}x+i\right\}_{i\in\left\{0,1,3\right\}}.
\end{equation}

Naturally the alphabet
corresponds to this IFS is
$\mathcal{A}:=\left\{ 0,1,3 \right\}$.
We write $\Sigma $  ($\Sigma ^*$) for the set of infinite (finite) words, respectively, over the alphabet $\mathcal{A}$.
As usual we write $\sigma $ for the left shift on
$\Sigma \cup\Sigma ^*$.
We write
\begin{equation}
\label{y73}
\mathbf{i}^-:=(i_1,\dots  ,i_{n-1}) \quad
\text{ for an }\quad
\mathbf{i}=(i_1,\dots  ,i_n)\in\mathcal{A}^n.
\end{equation}
We say that
$\Gamma \subset \Sigma ^*$ is a maximal finite antichain if for every $\mathbf{i}\in\Sigma $ there exists a unique $n$ such that $\mathbf{i}|_n\in\Gamma $.

Let $\Lambda$ be the attractor of $\mathcal{S}$. That is,
$\Lambda $ is the unique non-empty compact set satisfying $\Lambda =\bigcup\limits_{i\in\mathcal{A}} S_i(\Lambda )$.
The smallest interval that contains $\Lambda $ is $I=\left[ 0,\frac{9}{2} \right]$.
Put $I_{i_1\dots  i_n}:=S_{i_1\dots  i_n}(I)$, where we use the shorthand notation $S_{i_1\dots  i_n}=S_{i_1}\circ\cdots\circ S_{i_n}$.

The natural projection $\Pi :\Sigma \cup\Sigma ^*\to\Lambda  $  is defined by
\begin{equation}
\label{y96}
\Pi (\mathbf{i}):=\sum _{ k=1}^{|\mathbf{i}| }
i_k3^{-(k-1)},
\end{equation}
where $|\mathbf{i}|=n$ if $\mathbf{i}\in \mathcal{A}^n$ and $|\mathbf{i}|=\infty  $ if $\mathbf{i}\in\Sigma $.
For a finite word $\mathbf{i}\in\Sigma ^*$ the projection $\Pi (\mathbf{i})$ is
the left end point of the interval $I_{\mathbf{i}}$.
That is,
\begin{equation}
\label{y91}
\text{ for }\mathbf{i},\mathbf{j}\in \Sigma^* \text{ with }
|\mathbf{i}|=|\mathbf{j}|
\text{ we have }
\Pi (\mathbf{i})=\Pi (\mathbf{j})
 \Longleftrightarrow
I_{\mathbf{i}}=I_{\mathbf{j}}.
\end{equation}
 We are given a probability vector $\mathbf{p}:=(p_0,p_1,p_3)$. That is,
$p_i>0$ and $\sum _{i\in\mathcal{A}}p_i=1$.

\begin{figure}[h!]
    \centering
    \includegraphics[width=\textwidth]{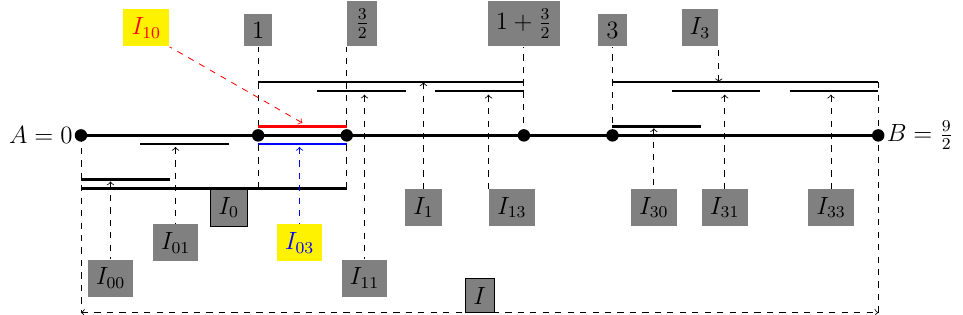}
    \caption{\footnotesize{$I_{i_1\dots  i_n}:=S_{i_1\dots  i_n}(I)$, $(i_1,\dots  ,i_n)\in\mathcal{A}^n$, and
     $I_{10}=I_{03}$.}}\label{y95}
  \end{figure}

We form the corresponding product measure $\mu :=\mathbf{p}^{\mathbb{N}}$ and define its push forward measure $\nu :=\Pi _*\mu $. Then,
$\nu $ is the \texttt{self-similar measure} corresponding to the probability vector $\mathbf{p}$. That is,
for every maximal finite antichain $\Gamma \subset \Sigma ^*$ we have
\begin{equation}
\label{y92}
\nu =\sum_{\mathbf{i}\in\Gamma }
p_{\mathbf{i}}\cdot\nu\circ S _{\mathbf{ i}}^{-1 }.
\end{equation}
The peculiarity of this IFS $\mathcal{S}$ is that we have complete overlap:
$
I_{0,3}=I_{1,0}.
$
Moreover, an easy case analysis yields:
\begin{fact}\label{w92}
  Assume that $\mathbf{i},\mathbf{j}\in \mathcal{A}^n$, with
  $i_1< j_1$ such that
$$
\Pi (\mathbf{i})=\Pi (\mathbf{j})\quad \text{\emph{but}}\quad
\Pi (\mathbf{i}|_k)\ne\Pi (\mathbf{j}|_k),\quad
\forall k<n.
$$
Then, $\mathbf{i}=(\underbrace{1,\dots  ,1 }_{n-1},0)$
and $\mathbf{j}=(0,\underbrace{3,\dots  ,3 }_{n-1})$.

\end{fact}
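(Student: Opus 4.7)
The plan is to induct on the common length $n$. I read the hypothesis as $i_1>j_1$ to match the stated conclusion (where $\mathbf{i}$ starts with $1$ and $\mathbf{j}$ with $0$; the printed $i_1<j_1$ appears to be a transposition). The argument has three stages: pin down the first two symbols of each word, reduce to a length-$(n-1)$ problem by deleting the second symbol, and invoke the inductive hypothesis.

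\emph{Stage 1: the first two symbols.} The level-one cylinders are $I_0=[0,3/2]$, $I_1=[1,5/2]$, $I_3=[3,9/2]$; the only distinct pair with nonempty intersection is $\{I_0,I_1\}$. Since $\Pi(\mathbf{i})=\Pi(\mathbf{j})\in I_{i_1}\cap I_{j_1}$, the assumption $i_1>j_1$ forces $i_1=1$, $j_1=0$. Writing $\Pi(\mathbf{i})=1+\Pi(\sigma\mathbf{i})/3$ and $\Pi(\mathbf{j})=\Pi(\sigma\mathbf{j})/3$, equality becomes $\Pi(\sigma\mathbf{j})-\Pi(\sigma\mathbf{i})=3$. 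With $\Pi(\sigma\mathbf{i})\ge 0$ this forces $\Pi(\sigma\mathbf{j})\ge 3$, which among the three level-one cylinders is possible only in $I_3$; so $j_2=3$. Symmetrically $\Pi(\sigma\mathbf{i})\le 3/2$ gives $i_2\in\{0,1\}$. For $n\ge 3$, the case $i_2=0$ makes $\Pi(\mathbf{i}|_2)=1=\Pi(\mathbf{j}|_2)$, violating the no-collision hypothesis at $k=2<n$; hence $i_2=1$.

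\emph{Stage 2: reduction.} For $n\ge 3$ I would define
\[
\tilde{\mathbf{i}}:=(1,i_3,i_4,\dots ,i_n),\qquad \tilde{\mathbf{j}}:=(0,j_3,j_4,\dots ,j_n),
\]
both of length $n-1$. A direct expansion of the projection sums yields, for every $k\ge 1$,
\[
\Pi(\mathbf{i}|_{k+1})=1+\tfrac{1}{3}\Pi(\tilde{\mathbf{i}}|_k),\qquad \Pi(\mathbf{j}|_{k+1})=1+\tfrac{1}{3}\Pi(\tilde{\mathbf{j}}|_k).
\]
Taking $k=n-1$ gives $\Pi(\tilde{\mathbf{i}})=\Pi(\tilde{\mathbf{j}})$; for $1\le k<n-1$, the required $\Pi(\tilde{\mathbf{i}}|_k)\ne\Pi(\tilde{\mathbf{j}}|_k)$ is equivalent to $\Pi(\mathbf{i}|_{k+1})\ne\Pi(\mathbf{j}|_{k+1})$, which is part of the hypothesis. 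Together with $\tilde i_1=1>0=\tilde j_1$, the pair $(\tilde{\mathbf{i}},\tilde{\mathbf{j}})$ satisfies the full hypothesis at length $n-1$.

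\emph{Stage 3: base case and conclusion.} For $n=2$, the equation $i_1+i_2/3=j_1+j_2/3$ with $i_1>j_1$ and digits in $\{0,1,3\}$ has the unique solution $(i_1,i_2,j_1,j_2)=(1,0,0,3)$. For $n\ge 3$, the inductive hypothesis applied to $\tilde{\mathbf{i}},\tilde{\mathbf{j}}$ gives $\tilde{\mathbf{i}}=(\underbrace{1,\dots ,1}_{n-2},0)$ and $\tilde{\mathbf{j}}=(0,\underbrace{3,\dots ,3}_{n-2})$; reinstating the deleted entries $i_2=1$ and $j_2=3$ produces the claimed forms of $\mathbf{i}$ and $\mathbf{j}$. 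The main bookkeeping to watch is the projection identity linking $\Pi(\mathbf{i}|_{k+1})$ with $\Pi(\tilde{\mathbf{i}}|_k)$; once that identity is in hand, the transfer of the no-collision property is automatic.
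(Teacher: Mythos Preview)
Your proof is correct. You rightly flag the transposition in the hypothesis: the stated conclusion has $i_1=1>0=j_1$, so the inequality should read $i_1>j_1$ (or, equivalently, one swaps the roles of $\mathbf{i}$ and $\mathbf{j}$). Your three stages are all sound; the key reduction identity $\Pi(\mathbf{i}|_{k+1})=1+\tfrac{1}{3}\Pi(\tilde{\mathbf{i}}|_k)$ (and its analogue for $\mathbf{j}$) follows directly from the explicit formula \eqref{y96} together with $i_1=i_2=1$ and $j_1=0,\ j_2=3$, and it cleanly transfers both the equality at level $n$ and the no-collision conditions at levels $<n$ to the shortened words.

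As for comparison: the paper does not actually give a proof of this Fact. It is introduced with the phrase ``an easy case analysis yields'' and left at that. Your inductive argument is a clean and complete realization of that case analysis, so there is nothing further to compare.
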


\begin{definition}
  Let
\begin{equation}\label{xx69}
  A:=\left(
     \begin{array}{ccc}
       1 & 1 & 0 \\
       1 & 1 & 1\\
       1 & 1 & 1 \\
     \end{array}
   \right)
  \text{ and }
   B:=\left(
     \begin{array}{ccc}
       1 & 1 & 1 \\
       0 & 1 & 1\\
       1 & 1 & 1 \\
     \end{array}
   \right),
\end{equation}
where we index the rows and columns with ${0,1,3}$
in increasing order.
We introduce the subshifts of finite types
$$
\Sigma _A:=\left\{ \mathbf{i}\in \Sigma :
 (i_k,i_{k+1})\ne (0,3), \forall k\right\},\
 \Sigma _B:=\left\{ \mathbf{i}\in \Sigma :
 (i_k,i_{k+1})\ne (1,0), \forall k\right\}.
$$
Analogously, we define
$$
\mathcal{T}_n:=\left\{ \mathbf{i}\in\mathcal{A}^n:
(i_k,i_{k+1})\ne (0,3), \forall k<n
\right\},
$$ and
$\mathcal{U}_n:=\left\{ \mathbf{i}\in\mathcal{A}^n:
(i_k,i_{k+1})\ne (1,0), \forall k<n
\right\}.
$
Set
$$
\Sigma _A^*:=\bigcup _{n=1}^{\infty   }\mathcal{T}_n\cup \flat \text{ and }
\Sigma _B^*:=\bigcup _{n=1}^{\infty   }\mathcal{U}_n\cup \flat,
$$
where $\flat$ is the empty word.

\end{definition}

An obvious case analysis shows (see
\cite[Fact 4.2.18]{BSSbook}) that the following fact holds:
\begin{fact}\label{y90}
  \begin{enumerate}[{\bf (a)}]
\item Assume that for the distinct $\mathbf{i},\mathbf{j}\in \mathcal{T}_n$ we have $I_{\mathbf{i}}\cap I_{\mathbf{j}}\ne \emptyset $. Then, $|\mathbf{i}\wedge \mathbf{j}|=n-1$ and $\left\{ i_n,j_n \right\}=\left\{ 0,1 \right\}$, where $\mathbf{i}\wedge \mathbf{j}$ is the common prefix of the words $\mathbf{i}$ and $\mathbf{j}$.
\item Assume that for the distinct $\mathbf{i},\mathbf{j}\in \mathcal{U}_n$ we have $I_{\mathbf{i}}\cap I_{\mathbf{j}}\ne \emptyset $. Then,
there exists a $k\leq n-2$ and an $\omega \in \mathcal{T}_k$ such that $\omega _{k}\ne 1$,
 $\mathbf{i}=\omega 0 \overline{3}^{n-k-1}$ and
 $\mathbf{j}=\omega \overline{1}^{n-k} $.
  \end{enumerate}
\end{fact}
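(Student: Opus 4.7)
The plan for both parts is to peel off the common prefix of $\mathbf{i}$ and $\mathbf{j}$ and then exploit the geometry of the level-one cylinders. Set $\omega:=\mathbf{i}\wedge\mathbf{j}$ and $k:=|\omega|$. Since $S_\omega$ is an affine bijection, $I_{\mathbf{i}}\cap I_{\mathbf{j}}\ne\emptyset$ is equivalent to $I_{i_{k+1}\cdots i_n}\cap I_{j_{k+1}\cdots j_n}\ne\emptyset$. Writing $\mathbf{i}':=(i_{k+1},\dots,i_n)$ and $\mathbf{j}':=(j_{k+1},\dots,j_n)$, their first letters differ by the maximality of $\omega$. A direct check gives $I_0=[0,3/2]$, $I_1=[1,5/2]$, $I_3=[3,9/2]$, so only the pair $I_0,I_1$ meets (in $[1,3/2]$). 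This forces $\{i'_1,j'_1\}=\{0,1\}$, and I take without loss of generality $i'_1=0$, $j'_1=1$.

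For part \textbf{(a)}, if $n-k\geq 2$ then $I_{\mathbf{i}'}\subseteq I_0$ must still meet $[1,3/2]$; but among the three level-$2$ subcylinders of $I_0$, namely $I_{00}=[0,1/2]$, $I_{01}=[1/3,5/6]$ and $I_{03}=[1,3/2]$, only $I_{03}$ touches $[1,3/2]$. Hence $i'_2=3$, contradicting $\mathbf{i}\in\mathcal{T}_n$. Therefore $n-k=1$, so $k=n-1$ and $\{i_n,j_n\}=\{0,1\}$. For part \textbf{(b)}, the same forcing gives $i'_2=3$, which is now permissible because $\mathcal{U}_n$ only bans $(1,0)$. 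The analogous level-$2$ analysis for $\mathbf{j}'$ rules out $j'_2=3$ (since $I_{13}=[2,5/2]$ is disjoint from $[1,3/2]$) and $j'_2=0$ (forbidden in $\mathcal{U}_n$), leaving $j'_2=1$.

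The key step for (b) is the recursion. Using the complete-overlap identity $S_0 S_3=S_1 S_0$ (i.e.\ $I_{03}=I_{10}$), I rewrite
\[
I_{\omega\,0\,3\,\alpha}=S_{\omega}S_{1}\bigl(I_{0\alpha}\bigr),\qquad
I_{\omega\,1\,1\,\beta}=S_{\omega}S_{1}\bigl(I_{1\beta}\bigr),
\]
so the intersection problem for $\mathbf{i},\mathbf{j}$ reduces to the very same problem for the pair $(0\alpha,1\beta)$, one level shorter. Iterating peels off one $(3,1)$ pair from the suffixes at each step and halts only when a length-one reduced pair remains, producing $\mathbf{i}=\omega\,0\,\overline{3}^{n-k-1}$ and $\mathbf{j}=\omega\,\overline{1}^{n-k}$. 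Finally, $\mathbf{i}\in\mathcal{U}_n$ excludes the pair $(\omega_k,0)=(1,0)$, which gives $\omega_k\ne 1$. The main obstacle is the bookkeeping in the recursion: at each step I must verify that the new reduced pair still avoids the forbidden pattern $(1,0)$ and that the rewrite via $S_0 S_3=S_1 S_0$ is compatible with subshift membership. Once this is in place, both (a) and (b) follow by a straightforward induction on $n-k$.
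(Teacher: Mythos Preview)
Your argument is correct and is exactly the ``obvious case analysis'' the paper alludes to; the paper itself gives no proof here, only the sentence ``An obvious case analysis shows (see \cite[Fact 4.2.18]{BSSbook})''. The reduction via the common prefix, the level-one geometry forcing $\{i'_1,j'_1\}=\{0,1\}$, the level-two forcing $i'_2=3$ (contradiction for (a), start of recursion for (b)), and the use of $S_0S_3=S_1S_0$ to iterate are all sound, and your remark that the reduced pairs $0\alpha,1\beta$ remain in $\mathcal{U}$ is the only point needing care, which you flag correctly.

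Two remarks about the \emph{statement} of (b) rather than your proof. First, the assertion $\omega\in\mathcal{T}_k$ appears to be a slip for $\omega\in\mathcal{U}_k$: with $n=4$, the pair $\mathbf{i}=(0,3,0,3)$, $\mathbf{j}=(0,3,1,1)$ lies in $\mathcal{U}_4$, their cylinders intersect, yet $\omega=(0,3)\notin\mathcal{T}_2$. Second, the bound $k\le n-2$ should read $k\le n-1$, since $\mathbf{i}=\omega 0$, $\mathbf{j}=\omega 1$ with $\omega_{n-1}\ne 1$ already gives intersecting cylinders. Neither point affects how (b) is used afterwards (only to derive that each $\mathcal{U}_n$-cylinder meets at most one other), so there is no need to adjust your argument.
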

For an $\mathbf{i}\in\mathcal{T}_n$ there can be exponentially
many $\mathbf{j}\in\mathcal{A}^n$ with $I_{\mathbf{i}}\cap I_{\mathbf{j}}\ne \emptyset $. However,
the previous
 Fact implies the following corollary.
\begin{corollary}\label{y89}
 If  $\mathbf{i}\in \mathcal{T}_n$,  then there is at most one $\mathbf{j}\in\mathcal{T}_n\setminus\left\{ \mathbf{i} \right\}$ such that
 $I_{\mathbf{i}}\cap I_{\mathbf{j}}\ne \emptyset $.
 The same remains valid if we replace $\mathcal{T}_n$
 with $\mathcal{U}_n$.
\end{corollary}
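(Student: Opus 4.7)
The plan is to derive the corollary directly from Fact~\ref{y90}.

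For the statement about $\mathcal{T}_n$, I would fix $\mathbf{i}\in\mathcal{T}_n$ and consider any $\mathbf{j}\in\mathcal{T}_n\setminus\{\mathbf{i}\}$ with $I_{\mathbf{i}}\cap I_{\mathbf{j}}\ne\emptyset$. Fact~\ref{y90}(a) forces $\mathbf{j}$ to agree with $\mathbf{i}$ on the first $n-1$ coordinates and forces $\{i_n,j_n\}=\{0,1\}$. Hence $\mathbf{j}$ is completely determined by $\mathbf{i}$: if $i_n\in\{0,1\}$, then $j_n$ is the unique element of $\{0,1\}\setminus\{i_n\}$, and one checks immediately that the resulting word lies in $\mathcal{T}_n$ because $j_n\in\{0,1\}$ prevents the creation of a forbidden $(0,3)$ pair at position $n-1$, while the other coordinates are inherited from $\mathbf{i}\in\mathcal{T}_n$; if instead $i_n=3$, then no such $\mathbf{j}$ exists at all. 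Either way, at most one candidate $\mathbf{j}$ appears.

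For the statement about $\mathcal{U}_n$, I would apply Fact~\ref{y90}(b). If $\mathbf{j}\in\mathcal{U}_n\setminus\{\mathbf{i}\}$ overlaps $\mathbf{i}$, then there are $k\leq n-2$ and $\omega\in\mathcal{T}_k$ with $\omega_k\ne 1$ such that the pair $\{\mathbf{i},\mathbf{j}\}$ equals $\{\omega\, 0\,\overline{3}^{\,n-k-1},\ \omega\,\overline{1}^{\,n-k}\}$. Since $n-k-1\geq 1$, the first of these words ends in the letter $3$, while the second ends in $1$, so $\mathbf{i}$ is compatible with at most one of the two shapes. Within whichever shape fits $\mathbf{i}$, the parameter $k$ is recovered by reading off the maximal trailing block of $3$'s (respectively of $1$'s), and then $\omega$ is simply the length-$k$ prefix of $\mathbf{i}$. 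Consequently $(k,\omega)$, and hence the partner $\mathbf{j}$, is uniquely determined by $\mathbf{i}$, giving again at most one candidate.

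The proof is essentially bookkeeping on top of Fact~\ref{y90}; the only content is the observation that the two admissible shapes for $\mathbf{i}$ in part~(b) end in different symbols and therefore cannot be realised simultaneously. I do not foresee any real obstacle, and expect the argument to fit in a few lines.
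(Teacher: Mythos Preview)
Your proposal is correct and follows exactly the route the paper indicates: the paper simply states the corollary as an immediate consequence of Fact~\ref{y90} without further argument, and your write-up supplies precisely the bookkeeping needed to extract uniqueness from parts (a) and (b) of that Fact. The only minor remark is that the verification that the candidate $\mathbf{j}$ actually lies in $\mathcal{T}_n$ is unnecessary for an ``at most one'' statement, but it does no harm.
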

If $p_1\geq p_3$, then we should work with
$\Sigma _A$ and $\mathcal{T}_n$, $n\in\mathbb{N}$. On the other hand, if
$p_3> p_1$, then we should work with
$\Sigma _B$, and $\mathcal{U}_n$, $n\in\mathbb{N}$.
\begin{PA}\label{y88}
  We always assume in this note that
\begin{equation}
 \label{y87}
 p_3\leq p_1.
 \end{equation}
\end{PA}
For the symmetry pointed out in Corollary \ref{y89}, we may assume without any loss of generality for the rest of the paper that \eqref{y87} holds.

 It is immediate from part (a) of Fact \ref{y90} that
\begin{equation}
\label{y86}
\mathbf{i},\mathbf{j}\in\mathcal{T}_n,\qquad
\mathbf{i}=
\mathbf{j} \Longleftrightarrow
S_{\mathbf{i}}= S_{\mathbf{j}}
\Longleftrightarrow
\Pi (\mathbf{i})= \Pi (\mathbf{j}).
\end{equation}

\section{The main result}

\begin{definition}[Pressure of a potential]\label{u95}
  We call a continuous function
   $f: \Sigma^* _A\to [0,\infty  )$ a potential.
   The pressure of the potential $f$
 is defined by
\begin{equation}
\label{u94}
P(f):=
\lim\limits_{n\to\infty}
\frac{1}{n}\log \sum _{\mathbf{ i}\in \mathcal{T}_n }
f (\mathbf{i}),
\end{equation}
if the limit exists, otherwise we replace the limit with limsup.
\end{definition}

\subsection{The main result}
As we mentioned above we assume that $p_1\geq p_3$. If $p_1\leq
p_3$, then all $\mathcal{T}_n$ below should be replaced by $\mathcal{U}_n$ and all the results remain unchanged.
We define
\begin{equation}
\label{u99}
\mathcal{I}_{\mathbf{i}}:=\left\{ \pmb{\eta}\in
\mathcal{A}^{n}:
S_{\pmb{\eta}}=S_{\mathbf{i}}
\right\}\quad \text{ and }\quad
\psi (\mathbf{i}):=\sum_{\pmb{\eta}\in \mathcal{I}_{\mathbf{i}}} p_{\pmb{\eta}},\quad\text{ for every }
\mathbf{i}\in\Sigma ^*_A,
\end{equation}
and we define
\begin{equation}
\label{u39}
\psi(\flat):=1, \text{ where } \flat \text{ is the empty word. }
\end{equation}
We will point out in \eqref{c97} that
\begin{equation}
\label{c99}
\#\mathcal{I}_n\leq n.
\end{equation}
We will prove in Section \ref{c98} that the limit in the following definition exists:
\begin{equation}
\label{u98}
p(t):=P(\psi ^t)=
\lim\limits_{n\to\infty}\frac{1}{n}\log \sum_{\mathbf{ i}\in \mathcal{T}_n}\psi^t (\mathbf{i}),\quad
t\geq 0.
\end{equation}
It follows from \eqref{c99} that for every $t\geq 0$ and $\varepsilon >0$
\begin{equation}
\label{c96}
\frac{p(t+\varepsilon )-p(t)}{\varepsilon }
\in
\left(
  \log p_{\min},\log p_{\max}
 \right).
\end{equation}
Using this and the definition of $\psi (\mathbf{i})$ we get that
the function $p(t)$ has the following properties:\label{c92}
\begin{enumerate}
[{\bf (i)}]
\item $p(t)$ is a continuous and strictly decreasing function on $[0,\infty  )$,
  \item $p(1)=0$ and so $p(t)>0$ for $t\in[0,1)$.
\end{enumerate}
This implies that  there exists a unique $t_r\in(0,1)$ such that
\begin{equation}
\label{u90}
p(t_r):=
r t_r\log 3, \quad \text{ for every } r\in(0,\infty  ).
\end{equation}
Observe that by the properties (i) and (ii) of $p(t)$ we obtain that
\begin{equation}
\label{c94}
t_r>0,\quad \text{ for all } r>0.
\end{equation}
We define
$\chi _r$ such that
\begin{equation}
\label{x71}
t_r=\frac{\chi _r}{r+\chi _r},
\text{ that is, }\quad
\chi _r=\frac{t_r r}{1-t_r}.
\end{equation}

\begin{theorem}\label{x70} The quantization dimension of the measure $\nu $ is
  $D_r(\nu )=\chi _r$.
\end{theorem}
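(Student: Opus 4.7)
The plan is to prove $\ul D_r(\nu)=\ol D_r(\nu)=\chi_r$ by constructing, for each $N\in\mathbb{N}$, a suitable \emph{adapted antichain} $\Gamma_N\subset\Sigma_A^*$ of cardinality $\asymp N$, and bounding $V_{|\Gamma_N|,r}(\nu)$ both from above and from below via this antichain. The blueprint is the one Graf--Luschgy use for the OSC case, with two modifications forced by the heavy overlaps: the role of the probability $p_{\mathbf{i}}$ is taken over by $\psi(\mathbf{i})$, and the overlap pattern inside $\mathcal{T}_n$ is controlled via Corollary \ref{y89}.

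A preliminary step is to establish the almost-multiplicativity of $\psi$ on $\Sigma_A^*$: there exist $c_1,c_2>0$ such that
$$c_1\,\psi(\mathbf{i})\psi(\mathbf{j})\le \psi(\mathbf{i}\mathbf{j})\le c_2\,\psi(\mathbf{i})\psi(\mathbf{j})\qquad\text{whenever }\mathbf{i},\mathbf{j},\mathbf{i}\mathbf{j}\in\Sigma_A^*.$$
This follows from Fact \ref{w92}, which pins down all coding collisions as the swaps $\underbrace{1\cdots 1}_{m-1}0\leftrightarrow 0\underbrace{3\cdots 3}_{m-1}$: the equivalence class $\mathcal{I}_{\mathbf{i}\mathbf{j}}$ essentially factorises as $\mathcal{I}_{\mathbf{i}}\times\mathcal{I}_{\mathbf{j}}$ up to a bounded boundary correction at the junction. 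A Fekete-type argument then yields existence of the limit in \eqref{u98}, continuity and strict monotonicity of $t\mapsto p(t)$, and the unique $t_0\in(0,1)$ with $p(t_0)=rt_0\log 3$ (Fact \ref{W87}, which we take for granted). Along the way one checks $\sum_{\mathbf{i}\in\Gamma}\psi^{t_0}(\mathbf{i})\cdot 3^{-|\mathbf{i}|rt_0}\asymp 1$ uniformly over all maximal antichains $\Gamma\subset\Sigma_A^*$.

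Now set
$$\Gamma_N:=\left\{\mathbf{i}\in\Sigma_A^*\ :\ \psi(\mathbf{i})^{t_0}\cdot 3^{-|\mathbf{i}|rt_0}\le \tfrac{1}{N}<\psi(\mathbf{i}^-)^{t_0}\cdot 3^{-(|\mathbf{i}|-1)rt_0}\right\}.$$
Almost-multiplicativity forces each summand above to be of order $1/N$, so $|\Gamma_N|\asymp N$ and $\psi(\mathbf{i})\cdot 3^{-|\mathbf{i}|r}\le N^{-1/t_0}$ for every $\mathbf{i}\in\Gamma_N$. For the upper bound, choose any $a_{\mathbf{i}}\in I_{\mathbf{i}}\cap\Lambda$ and set $\alpha_N:=\{a_{\mathbf{i}}:\mathbf{i}\in\Gamma_N\}$; decomposing $\nu$ via \eqref{y92} collapsed through the equivalence $\mathcal{I}$ yields
$$V_{|\alpha_N|,r}(\nu)\le \sum_{\mathbf{i}\in\Gamma_N}\psi(\mathbf{i})\,|I_{\mathbf{i}}|^r\le C\,|\Gamma_N|\,N^{-1/t_0}\le C'\,N^{-(1-t_0)/t_0},$$
whence $\ol D_r(\nu)\le rt_0/(1-t_0)=\chi_r$. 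For the lower bound, fix a large constant $K$ and work with $\Gamma_{KN}$. By Corollary \ref{y89}, cylinders within any single generation overlap at most pairwise, so after bucketing the elements of $\Gamma_{KN}$ according to their shared overlap partners one obtains $\asymp KN$ ``super-intervals'' $J$ of bounded geometric multiplicity, each of $\nu$-mass $\asymp \psi(\mathbf{i})$ and length $\asymp 3^{-|\mathbf{i}|}$. A pigeonhole count shows that any $n$-quantizer $\alpha$ lies within distance $\tfrac13|J|$ of at most $Mn$ super-intervals (some absolute $M$); so once $n\le \tfrac{1}{2M}KN$, at least half of the super-intervals are ``empty,'' each contributing at least $c\,\psi(\mathbf{i})\cdot 3^{-|\mathbf{i}|r}\asymp (KN)^{-1/t_0}$ to the error, giving $V_{n,r}(\nu)\gtrsim n^{-(1-t_0)/t_0}$ and therefore $\ul D_r(\nu)\ge \chi_r$.

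The main obstacle is the bounded-multiplicity step in the lower bound: Corollary \ref{y89} only controls overlaps \emph{within} a fixed generation $\mathcal{T}_n$, whereas $\Gamma_N$ mixes cylinders of many generations, and a quantizer near a ``short'' (high-measure, long) cylinder could otherwise pretend to serve many ``long'' (low-measure, short) cylinders piled on top of it. Converting the WSP into a quantitative uniform bounded-multiplicity statement for $\Gamma_N$ is the crucial technical ingredient; I would handle it by a generation-wise induction combined with the almost-multiplicativity of $\psi$ and the uniform lower bound $\psi(\mathbf{i})\gtrsim p_3^{|\mathbf{i}|}$, possibly invoking the WSP characterization from Zerner's paper cited earlier in the text.
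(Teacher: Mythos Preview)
Your preliminary step---the two-sided almost-multiplicativity of $\psi$---fails in general. Take $\mathbf{i}=\overline{1}^n$ and $\mathbf{j}=(0)$; then $\mathcal{I}_{\mathbf{i}}=\{\overline{1}^n\}$ and $\mathcal{I}_{\mathbf{j}}=\{(0)\}$, but $\mathcal{I}_{\mathbf{i}\mathbf{j}}=\mathcal{I}_{\overline{1}^n 0}$ has $n+1$ elements (see~\eqref{u84}), so
\[
\frac{\psi(\mathbf{i}\mathbf{j})}{\psi(\mathbf{i})\,\psi(\mathbf{j})}=\sum_{\ell=0}^{n}\Big(\frac{p_3}{p_1}\Big)^{\ell},
\]
which diverges as $n\to\infty$ whenever $p_1=p_3$, a case expressly permitted by Principal Assumption~\ref{y88}. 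The ``bounded boundary correction at the junction'' is therefore \emph{not} bounded: a single appended letter can retroactively enlarge the equivalence class by a factor of order $n$. This is not a technicality---the paper's entire final section is devoted to constructing the modified potential $\widehat{\psi}$ of~\eqref{a91}, proving that $\widehat{\psi}$ (not $\psi$) is weak quasi-multiplicative, and showing $\psi\le\widehat{\psi}\le C_3 n\,\psi$ on $\mathcal{T}_n$ (Property-\ref{w97}). The polynomial discrepancy is harmless at the level of pressure, but the sub-multiplicativity of $\widehat{\psi}$ is what makes Feng's Gibbs-measure theorem (Theorem~\ref{y36}) applicable and hence the counting $\#\widehat{\Gamma}(\varepsilon)\asymp 1/\varepsilon$ legitimate. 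Without it your $\Gamma_N$ need not satisfy $\#\Gamma_N\asymp N$, and the rest of the argument has no foundation.

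Your lower-bound sketch has a second, independent gap, which you yourself flag: Corollary~\ref{y89} controls overlaps only within a fixed $\mathcal{T}_n$, whereas $\Gamma_N$ mixes generations. The paper sidesteps this entirely by following the Graf--Luschgy $u_{n,r}$ route: one proves the recursive inequality $u_{n,r}(\nu)\ge\sum_{\mathbf{i}\in\mathcal{T}_m}\psi(\mathbf{i})\,3^{-mr}u_{n_{\mathbf{i}},r}(\nu)$ for a \emph{single fixed} level $m$ (Lemma~\ref{x67}) and then closes by induction and the reversed H\"older inequality (Proposition~\ref{x49}). This needs only the pairwise overlap bound of Corollary~\ref{y89} at that one level $m$ (to obtain $\sum_{\mathbf{i}} n_{\mathbf{i}}\le 2n$) and never confronts cross-generation multiplicity at all. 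Your proposed WSP/Zerner packing route might be salvageable, but it is a substantially harder path than the one the paper actually takes.
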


\subsection{An explicit formula for the $L^q$-spectrum of $\nu $}
As we mentioned, the combination of \cite[Corolary 1.12]{KNZ} and Theorem \ref{x70} yields an explicit formula for the $L^q$-spectrum of $\nu $
for $q\in(0,1)$. Namely,
the $L^q$-spectrum of the measure $\nu $ is defined for $q>0$ by
\begin{equation}
\label{r99}
\beta (q):=\limsup\limits_{n\to\infty  }
\frac{\log \sum\limits_{C\in\mathcal{D}_n}\nu (C)^q}{\log 2^n},
\end{equation}
where $\mathcal{D}_n:=\left\{(k/2^n,(k+1)/2^n] :
k\in\mathbb{Z}
\right\}$ is the partition of $\mathbb{R}$
into dyadic intervals. In \cite{KNZ} the authors introduced
\begin{equation}
\label{r93}
q_r:=\inf\left\{ q>0:\beta (q)<qr \right\}.
\end{equation}
In our special case,  \cite[Corolary 1.12]{KNZ} yields
\begin{equation}
\label{r96}
D_r(\nu )=\frac{r q_r}{1-q_r}, \quad \text{ for every } r>0.
\end{equation}
Putting together \eqref{x71}, the assertion of Theorem \ref{x70} and \eqref{r96} we get
\begin{equation}
\label{r95}
\frac{r t_r}{1-t_r}=
D_r(\nu )=\frac{r q_r}{1-q_r},  \quad \text{ for every } r>0.
\end{equation}
The combination of this and \eqref{c94} yields
\begin{equation}
\label{c95}
q_r=t_r>0.
\end{equation}
We know that the function $\beta (q)$ convex and in this way continuous on $(0,\infty  )$. Hence, by the definition of $q_r$ we get
\begin{equation}
\label{r94}
\beta (q_r)=rq_r.
\end{equation}

\begin{corollary}\label{r97}
For every $q\in(0,1)$ we have
\begin{equation}
\label{r98}
\beta (q)=\frac{p(q)}{\log 3}.
\end{equation}
\end{corollary}
\begin{proof}[Proof of Corollary \ref{r97} assuming Theorem \ref{x70}]
 Putting together \eqref{u90}, \eqref{c95} and \eqref{r94} we get
\begin{equation}
\label{c93}
\frac{p(t_r)}{r\log 3}=t_r=q_r=
\frac{\beta (q_r)}{r}, \quad \text{for all } r>0.
\end{equation}
Hence,
\begin{equation}
\label{r92}
p(q_r)=  \beta (q_r)\cdot\log 3,  \quad \text{ for every } r>0.
\end{equation}
Choose an arbitrary $q'\in (0,1)$.
Let $r:=p(q')/q'$.
Observe that   $r>0$ by property (ii) of the function $p(t)$. Then,
$q_r=q'$. Hence, by \eqref{c93} we get that
$\beta (q')=\frac{p(q')}{\log 3}$.
\end{proof}

\subsection{The main Proposition}\label{w99}
To prove our main result we need our Main Proposition
(Proposition \ref{u18}) below. To state it we need some further notation.

\subsubsection{ A projection $\Phi : \Sigma\cup \Sigma ^* \to \Sigma_A\cup \Sigma_A^*$}
Recall that by definition $\Sigma\cup \Sigma ^*$ is the collection of finite or infinite words over the alphabet $\mathcal{A}$. Similarly, $\Sigma_A\cup \Sigma_A^*\subset \Sigma\cup \Sigma ^*$
is the collection of those elements of $\Sigma\cup \Sigma ^*$, which do not contain the sequence $(0,3)$.
First we define such a mapping $\Phi: \Sigma\cup \Sigma ^* \to \Sigma_A\cup \Sigma_A^* $ which has the following properties:
For every $\pmb{\eta}\in \Sigma\cup \Sigma ^*$
\begin{enumerate}
[{\bf (a)}]
  \item $\Phi $ preserves the length of every word: $|\pmb{\eta}|=|\Phi (\pmb{\eta})|$,
  \item $\Pi (\pmb{\eta})=\Pi (\Phi (\pmb{\eta}))$.
\end{enumerate}
In the rest of the paper we frequently use the following notation:
 For a digit $a$ and $n\in \mathbb{N}\cup\{\infty  \}$,
$$
\overline{a}^n:=\underbrace{
a,a,\dots  ,a}_n.
$$
\begin{definition}\label{w91}
 We define $\Phi :\Sigma\cup \Sigma ^* \to \Sigma_A\cup \Sigma_A^*$ as follows:
Let $\pmb{\eta }\in \Sigma $.
We obtain $\mathbf{i}:=(i_1,i_2,\dots  ):=\Phi (\pmb{\eta })\in \Sigma_A$ from $\pmb{\eta }\in \Sigma $ by successive substitutions as follows
\begin{enumerate}
[{\bf (a)}]
  \item For every $1\leq k<\ell <\infty  $ such that $(\eta _k,\dots  ,\eta _{\ell })=(0,\overline{3}^{\ell-k })$ and $\eta _{\ell +1}\ne 3$,  we define
  $(i_k,\dots  ,i_{\ell }):=(\overline{1}^{\ell-k },0)$.
\item If there exists a $1\leq k<\infty  $ such that
$(\eta _k, \eta_{k+1},  \eta_{k+1},\dots  )=(0,\overline{3}^{\infty  })$, then we define
$(i_k,i_{k+1},\dots  ):= (\overline{1}^{\infty  })$.
\end{enumerate}
\end{definition}
Then, we get rid of all the $(\eta _k,\eta _{k+1})=(0,3)$ in $\pmb{\eta }$, so
$\Phi (\pmb{\eta })\in \Sigma _A\cup \Sigma_A^*$ and
by Fact \ref{w92}:
\begin{equation}
\label{w89}
\Pi (\Phi (\pmb{\eta} ))=\Pi (\pmb{\eta }),\qquad
\forall \pmb{\eta }\in \Sigma \cup\Sigma ^*.
\end{equation}
However,
considerable technical difficulties are caused by the fact that
\begin{equation}
\label{w88}
\Phi \circ \sigma \ne \sigma \circ \Phi.
\end{equation}
Namely, $\sigma \Phi (0,3)=\sigma  10=0\ne 3=\Phi (3)= \Phi (\sigma 03)$.

Moreover,
\begin{equation}
\label{y08}
\exists\, \pmb{\eta },\text{ and } \exists\, m<|\pmb{\eta }|,
\qquad
\Phi (\pmb{\eta}|_m)\ne \Phi (\pmb{\eta})|_m.
\end{equation}
For example,
\begin{equation*}
\label{y07}
\Phi (0,0,3)=(0,1,0), \text{ but }
\Phi ((0,0,3)|_2)=\Phi (0,0)=(0,0)\ne
(0,1)=\Phi (0,0,3)|_2.
\end{equation*}

D.J. Feng \cite{feng2011equilibrium} introduced
a very important family of potentials.
This family was
termed quasi-multiplicative potentials by
A. K\"aenm\"aki and H.W. Reeve \cite{kaenmaki2014multifractal}).

\subsubsection{Weak quasi-multiplicative potentials}

\begin{definition}\label{u19}
  We say that a function  $\phi: \Sigma _{A}^{*}\to [0,\infty  ) $ is a
  \texttt{weak quasi-}
  \texttt{multiplicative potential} on $\Sigma^* _A$
if the following two conditions hold:
\begin{enumerate}[{\bf (a)}]
\item
There is an $\pmb{\ell }\in \Sigma _A^*$ which is not the empty word such that $\phi (\pmb{\ell })>0$. Moreover,
there exist
$C_1,C_2>0$ such that
\begin{equation}
\label{y85}
\phi (\mathbf{i}\mathbf{j})\leq C_1
\phi (\mathbf{i})\phi (\mathbf{j}),\qquad
\mathbf{i}\mathbf{j}\in\Sigma _A^*.
\end{equation}
\item
There exists a $z\in \mathbb{N}$ such that
\begin{equation}
\label{y84}
\forall \mathbf{i},\mathbf{j}\in \Sigma _A^*,
\exists \mathbf{k}\in \bigcup _{\ell =1}^{z }
\mathcal{T}_{\ell }\cup \flat
\text{ such that }
\mathbf{i}\mathbf{k}\mathbf{j}\in \Sigma _A^*
\text{ and }
\phi (\mathbf{i})\phi(\mathbf{j})\leq
C_2\phi (\mathbf{i}\mathbf{k}\mathbf{j}).
\end{equation}
\end{enumerate}
\end{definition}

First we introduce a potential $\widehat{\psi }:\Sigma^* _A \to (0,1]$ as follows:

\begin{equation}
  \label{a91}
  \widehat{\psi}(\mathbf{i}):
  =\left\{
  \begin{array}{ll}
    \max\left\{
      \psi(\mathbf{i}),\psi(\mathbf{i}^-0)
       \right\}
  ,&
  \hbox{if $\mathbf{i}_{|\mathbf{i}|}=1$;}
  \\
  \psi(\mathbf{i})
  ,&
  \hbox{if $\mathbf{i}_{|\mathbf{i}|}\ne 1$,}
  \end{array}
  \right. \quad
  \text{ for } \mathbf{i}\in\Sigma^* _A.
  \end{equation}

\subsubsection{The statement of the  Main Proposition}

\begin{proposition}\label{u18}\
  The following properties hold:
  \begin{enumerate}[{\bf Property}-\bf 1]
\item\label{w98} $\widehat{\psi }$ is a weak quasi-multiplicative potential on $\Sigma^* _A$.
\item\label{w97}
There exists a $C_3>0$ such that
for every $n\geq 1$
\begin{equation}
\label{u15}
1\leq \frac{\widehat{\psi}(\mathbf{i})}{\psi(\mathbf{i})}\leq C_3\cdot n,\qquad
\forall  \mathbf{i}\in \mathcal{T}_n.
\end{equation}
\item\label{w96}  For every $n\geq 1$
\begin{equation}
\label{w95}
\sum_{ \mathbf{i}\in\mathcal{T}_n}
\ind_{\mathcal{I}_{\mathbf{i}}}(x)\leq 2.
\end{equation}
\item\label{w93}
 For every $n\in \mathbb{N}\cup\{\infty  \}$, $\pmb{\eta }\in \mathcal{A}^n$, $1\leq z<n$,  $\mathbf{i}:=\Phi (\pmb{\eta })|_z$

 \begin{equation}
\label{v99}
\pmb{\eta }|_n\subset \mathcal{I}_{\mathbf{i}}\cup \mathcal{I}_{\mathbf{i}^-0}.
\end{equation}

\end{enumerate}
\end{proposition}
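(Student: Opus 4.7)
My plan exploits, throughout, the single overlap identity $S_0 S_3 = S_1 S_0$ and its iterates $S_0 S_3^m = S_1^m S_0$, which by Fact~\ref{w92} generate every non-trivial coincidence among the maps $S_{\mathbf{i}}$. In particular, every element of $\mathcal{I}_{\mathbf{i}}$ is obtained from $\mathbf{i}$ by some finite sequence of local substitutions $(1,0)\leftrightarrow(0,3)$. I would verify the four properties roughly in the order $4, 3, 2, 1$, as they have increasing analytic content.

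For Property~\ref{w93}, I would do a case analysis on where the cut-point $z$ falls relative to the maximal substitution blocks $(\eta_k,\dots,\eta_\ell)=(0,3^{\ell-k})$ of $\pmb{\eta}$. If no such block strictly straddles position $z$, the $\Phi$-substitutions on the first $z$ coordinates are all complete, and the identity yields $S_{\pmb{\eta}|_z} = S_{\Phi(\pmb{\eta})|_z} = S_{\mathbf{i}}$, placing $\pmb{\eta}|_z \in \mathcal{I}_{\mathbf{i}}$. Otherwise $k\leq z < \ell$ for some block; then $\pmb{\eta}|_z$ ends with the partial pattern $0\,3^{z-k}$ while $\mathbf{i}$ ends with $1^{z-k+1}$, and isolating the last $S_0$ via $S_0 S_3^{z-k} = S_1^{z-k} S_0$ gives $S_{\pmb{\eta}|_z} = S_{\mathbf{i}^-0}$, hence $\pmb{\eta}|_z \in \mathcal{I}_{\mathbf{i}^-0}$. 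Property~\ref{w96} is a direct reading of Corollary~\ref{y89} (interpreting the indicator as that of the cylinder interval $I_{\mathbf{i}}$): every point lies in at most two of the intervals $\{I_{\mathbf{i}}:\mathbf{i}\in\mathcal{T}_n\}$.

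For Property~\ref{w97}, $\widehat{\psi}\geq\psi$ is immediate from \eqref{a91}. For the upper bound, I would write any $\mathbf{i}\in\mathcal{T}_n$ ending in $1$ as $\mathbf{i} = (\mathbf{u},1^{k+1})$ with $\mathbf{u}$ ending in a symbol other than $1$. The only substitutions in $\mathbf{i}^-0 = (\mathbf{u},1^k,0)$ beyond those internal to $\mathbf{u}$ move through the trailing pattern via $1^k 0 \leftrightarrow 1^{m}\,0\,3^{k-m}$, giving
\[
  \psi(\mathbf{i}^-0) = \psi(\mathbf{u})\cdot p_0 \sum_{m=0}^{k} p_1^{m}p_3^{k-m}, \qquad \psi(\mathbf{i}) = \psi(\mathbf{u})\cdot p_1^{k+1}.
\]
Principal Assumption~\ref{y88} ($p_3\leq p_1$) bounds each summand by $p_1^{k}$, so the ratio is at most $(p_0/p_1)(k+1)\leq(p_0/p_1)\,n$, and $C_3 := p_0/p_1$ works.

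The main obstacle is Property~\ref{w98}. For the submultiplicativity \eqref{y85}, junction substitutions occur only when $\mathbf{i}=(\mathbf{i}_*,1^m)$ (with $\mathbf{i}_*$ ending in non-$1$, $m\geq 1$) and $\mathbf{j}=(0,\mathbf{j}_*)$; the same enumeration then gives
\[
  \psi(\mathbf{i}\mathbf{j}) \;=\; \psi(\mathbf{i})\psi(\mathbf{j})\sum_{k=0}^{m}(p_3/p_1)^k.
\]
When $p_3 < p_1$ the sum is bounded by $p_1/(p_1-p_3)$; when $p_3 = p_1$ it grows like $m+1$, but the lower bound $\widehat{\psi}(\mathbf{i})\geq \psi(\mathbf{i}^-0)\asymp (m+1)\,\psi(\mathbf{i})$ supplied by Property~\ref{w97} exactly compensates, yielding $\psi(\mathbf{i}\mathbf{j})\leq C_1\widehat{\psi}(\mathbf{i})\widehat{\psi}(\mathbf{j})$; a parallel argument bounds $\psi((\mathbf{i}\mathbf{j})^-0)$, so $\widehat{\psi}(\mathbf{i}\mathbf{j})\leq C_1\widehat{\psi}(\mathbf{i})\widehat{\psi}(\mathbf{j})$. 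For the reverse inequality \eqref{y84} I would insert a short connector $\mathbf{k}$ from a finite list (chosen according to the last symbol of $\mathbf{i}$ and the first symbol of $\mathbf{j}$) so that $\mathbf{i}\mathbf{k}\mathbf{j}\in\Sigma_A^*$ and the connector destroys the $(1,0)$ pattern at both new junctions; with no junction substitutions possible, $\psi(\mathbf{i}\mathbf{k}\mathbf{j})\geq \psi(\mathbf{i})\,p_{\mathbf{k}}\,\psi(\mathbf{j})$, and the transition $\psi\to\widehat{\psi}$ costs only a bounded factor because $\mathbf{k}$ controls the last symbol of $\mathbf{i}\mathbf{k}$. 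The delicate point is producing a \emph{uniformly bounded} list of connectors $\mathbf{k}$ that works for every $(\mathbf{i},\mathbf{j})$ while keeping $\mathbf{i}\mathbf{k}\mathbf{j}\in\Sigma_A^*$, which is precisely what forces the \emph{weak} form of quasi-multiplicativity (with an insertion of length up to $z$ rather than direct multiplicativity).
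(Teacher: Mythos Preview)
Your treatment of Properties~\ref{w97}, \ref{w96}, \ref{w93} is essentially the paper's argument, and your computation for Property~\ref{w97} reproduces exactly the constant $C_3=p_0/p_1$ that appears in the proof of Fact~\ref{u83}. The submultiplicative half of Property~\ref{w98} is also on the right track, though your case ``$\mathbf{j}=(0,\mathbf{j}_*)$'' is not the only one producing a junction good block: if $\mathbf{j}$ begins with $1^p0$ (any $p\geq 1$) the block $1^{m+p}0$ straddles the cut and the same analysis is needed. This is fixable along the lines you indicate.

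There is, however, a genuine gap in your supermultiplicativity argument. You propose to choose a connector $\mathbf{k}$ that \emph{destroys} the $(1,0)$ pattern at both new junctions, so that no junction substitutions occur and
\[
\psi(\mathbf{i}\mathbf{k}\mathbf{j})\;=\;\psi(\mathbf{i})\,\psi(\mathbf{k})\,\psi(\mathbf{j}).
\]
But then the right-hand side contains $\psi(\mathbf{i})$, not $\widehat{\psi}(\mathbf{i})$, and by Property~\ref{w97} the ratio $\widehat{\psi}(\mathbf{i})/\psi(\mathbf{i})$ is \emph{not} uniformly bounded: when $p_1-p_3<p_0$ and $\mathbf{i}=\mathbf{i}_*\,1^m$ with $m$ large one has $\widehat{\psi}(\mathbf{i})\asymp m\,\psi(\mathbf{i})$ (your own computation). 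Hence $\psi(\mathbf{i}\mathbf{k}\mathbf{j})\asymp m^{-1}\widehat{\psi}(\mathbf{i})\widehat{\psi}(\mathbf{j})$, which kills any uniform lower bound. Choosing $\mathbf{k}$ to ``control the last symbol of $\mathbf{i}\mathbf{k}$'' does not help: that only forces $\widehat{\psi}(\mathbf{i}\mathbf{k})=\psi(\mathbf{i}\mathbf{k})$, which is not the quantity appearing in~\eqref{y84}.

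The remedy is the opposite of what you propose: one should \emph{create} a $0$ immediately after $\mathbf{i}$, so that the trailing block $1^m$ becomes the good block $1^m0$ and the junction substitutions themselves supply the missing factor. Concretely (this is the paper's Lemma~\ref{u32}), the single connector $\mathbf{k}=01$ works for all $\mathbf{i},\mathbf{j}\in\Sigma_A^*$: one has $(\mathbf{i}01\mathbf{j})^*=\mathbf{i}0\,(1\mathbf{j})^*$, position $n+1$ lies in $D_{\text{Good}}$, and then
\[
\widehat{\psi}(\mathbf{i}01\mathbf{j})=\psi(\mathbf{i}0)\,\psi\bigl((1\mathbf{j})^*\bigr)\;\geq\;p_0\,\widehat{\psi}(\mathbf{i})\cdot p_1\,\widehat{\psi}(\mathbf{j}),
\]
where the two inequalities are precisely Claims~\ref{u29} and~\ref{u24}: appending $0$ to $\mathbf{i}$ converts $\psi$ to $\widehat{\psi}$ on the $\mathbf{i}$ side, and prepending $1$ to $\mathbf{j}$ does the analogous job on the $\mathbf{j}$ side.
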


Property-\ref{w97} is an immediate consequence of Fact \ref{u83},

 Property-\ref{w96} is an immediate consequence of Part (a) of Fact \ref{y90},

Property-\ref{w93} is proved  in Part (c) of Fact \ref{y13}.

\textbf{The organization of the rest of the paper}
\begin{enumerate}[{\bf (a)}]
  \item In Section \ref{w94} we introduce further pressure functions and finite maximal antichains. Moreover, we prove some of their properties, assuming Proposition \ref{u18}.
  \item In Sections \ref{y16} and \ref{a99}
  we prove Theorem \ref{x70} using only those properties of $\widehat{\psi}(\mathbf{i})$ which are listed in Proposition \ref{u18}.
  \item In Section \ref{aa99} we prove that $\widehat{\psi}(\mathbf{i})$
  satisfies the Properties-\ref{w98},\ref{w97} listed in Proposition \ref{u18}.
\end{enumerate}

\section{Pressure functions and finite maximal antichains}\label{w94}
We will use the following theorem due to D.J. Feng
\cite[Theorem 5.5]{feng2011equilibrium}.
\begin{theorem}[Feng]\label{y36}
Let $\phi $ be a weak quasi-multiplicative potential on $\Sigma^* _A$. Then, there exists a unique invariant ergodic measure $ \mathfrak{m} $ on $\Sigma _A$ with the following property
\begin{equation}
\label{y83}
\mathfrak{m}(\mathbf{i})
\thickapprox
\frac{\phi (\mathbf{i})}{\sum _{\mathbf{ j}\in \mathcal{T}_n}\phi (\mathbf{j})}
\thickapprox
\phi (\mathbf{i})\exp\left(-n P(\phi )  \right),
\end{equation}
where
$ a(\mathbf{i})\thickapprox b(\mathbf{i})$ if there exists a $c>0$ such that
$\frac{1}{c}b(\mathbf{i})\leq
a(\mathbf{i})\leq c b(\mathbf{i})$
for all $\mathbf{i}\in\Sigma _A^*$ and
\begin{equation}
\label{u17}
P(\phi )=\lim\limits_{n\to\infty}\log \sum\limits_{\mathbf{i}\in\mathcal{T}_n}
\phi (\mathbf{i}).
\end{equation}
\end{theorem}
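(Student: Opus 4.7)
The plan is to follow the classical Ruelle–Perron–Frobenius / Bowen construction of Gibbs measures, adapted to the looser axioms of Definition \ref{u19}. Set $Z_n := \sum_{\mathbf{i}\in\mathcal{T}_n}\phi(\mathbf{i})$. Property (a) yields $Z_{n+m}\leq C_1 Z_n Z_m$, because every $\mathbf{k}\in\mathcal{T}_{n+m}$ factors uniquely as a legal concatenation $\mathbf{i}\mathbf{j}$ with $\mathbf{i}\in\mathcal{T}_n$, $\mathbf{j}\in\mathcal{T}_m$, and then $\phi(\mathbf{i}\mathbf{j})\leq C_1\phi(\mathbf{i})\phi(\mathbf{j})$. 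Hence $a_n := \log(C_1 Z_n)$ is subadditive and Fekete's lemma gives existence of $P(\phi) = \lim_n \tfrac{1}{n}\log Z_n$. Property (b), combined with pigeonholing over the finitely many possible bridge words $\mathbf{k}$, produces a matching reverse inequality of the form $Z_n Z_m \leq C\sum_{\ell=0}^{z} Z_{n+\ell+m}$, which promotes the pressure to a genuine limit and delivers the uniform two-sided asymptotic $Z_n \asymp e^{nP(\phi)}$.

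Next I would build the measure as a weak-$\ast$ limit. For each $n$ define a probability $\mathfrak{m}_n$ on $\Sigma_A$ that assigns mass $\phi(\mathbf{i})/Z_n$ to each cylinder $[\mathbf{i}]$, $\mathbf{i}\in\mathcal{T}_n$, distributing the remaining mass arbitrarily on longer cylinders. Cesàro-averaging the shifts, $\widetilde{\mathfrak{m}}_N := \tfrac{1}{N}\sum_{k=0}^{N-1}\mathfrak{m}_N\circ\sigma^{-k}$, and extracting a weak-$\ast$ subsequential limit on the compact space $\Sigma_A$ yields a $\sigma$-invariant probability $\mathfrak{m}$, by the Krylov--Bogolyubov procedure.

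For the Gibbs estimate, fix $\mathbf{i}\in\mathcal{T}_k$. The upper bound
\begin{equation*}
\mathfrak{m}_n([\mathbf{i}])
= \frac{1}{Z_n}\sum_{\mathbf{j}\,:\,\mathbf{i}\mathbf{j}\in\mathcal{T}_n}\phi(\mathbf{i}\mathbf{j})
\leq C_1\,\phi(\mathbf{i})\,\frac{Z_{n-k}}{Z_n}
\asymp \phi(\mathbf{i})\,e^{-kP(\phi)}
\end{equation*}
is immediate from submultiplicativity and the pressure asymptotics. For the matching lower bound, invoke the bridge condition: each $\mathbf{j}\in\mathcal{T}_{n-k}$ admits some $\ell\leq z$ and $\mathbf{k}_\ell\in\mathcal{T}_\ell\cup\{\flat\}$ with $\mathbf{i}\mathbf{k}_\ell\mathbf{j}\in\mathcal{T}_{n+\ell}$ and $\phi(\mathbf{i})\phi(\mathbf{j})\leq C_2\phi(\mathbf{i}\mathbf{k}_\ell\mathbf{j})$. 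Summing over $\mathbf{j}$ and using the pigeonhole over $\ell\in\{0,\dots,z\}$ gives $\phi(\mathbf{i})Z_{n-k}\leq C_2(z+1)\max_{0\leq\ell\leq z}Z_{n+\ell}$, which transfers to $\mathfrak{m}_n([\mathbf{i}])\gtrsim \phi(\mathbf{i})e^{-kP(\phi)}$. These two-sided bounds survive the Cesàro averaging and the weak-$\ast$ limit, yielding the Gibbs property \eqref{y83}.

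Finally, the Gibbs estimate combined with the bridge condition yields a quasi-Bernoulli mixing statement, $\mathfrak{m}([\mathbf{i}]\cap\sigma^{-(|\mathbf{i}|+\ell)}[\mathbf{j}])\asymp \mathfrak{m}([\mathbf{i}])\mathfrak{m}([\mathbf{j}])$ for some $0\leq\ell\leq z$, uniformly in $\mathbf{i},\mathbf{j}$; this is enough to conclude ergodicity by a standard cylinder-approximation argument applied to the mean ergodic theorem. Uniqueness follows because any other shift-invariant probability $\mathfrak{m}'$ obeying \eqref{y83} must be comparable to $\mathfrak{m}$ on every cylinder with a uniform constant, hence absolutely continuous with a bounded Radon--Nikodym derivative, which ergodicity forces to be constant (and thus equal to $1$). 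The main technical obstacle is that the bridge word $\mathbf{k}$ in part (b) depends on the pair $(\mathbf{i},\mathbf{j})$ and has variable length up to $z$; only the finiteness of the pool of possible bridges allows the pigeonhole that closes all estimates up to uniform constants, and this is precisely the nontrivial extension of the classical Bowen/Ruelle argument beyond genuinely quasi-multiplicative potentials.
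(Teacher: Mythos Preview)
The paper does not prove Theorem~\ref{y36} at all: it is stated with attribution to Feng and cited as \cite[Theorem~5.5]{feng2011equilibrium}, and the authors simply invoke it as a black box to obtain \eqref{u16} and Proposition~\ref{y34}. So there is no ``paper's own proof'' to compare against.

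Your sketch is a reasonable outline of the standard Bowen--Ruelle construction, and it is indeed in the spirit of what Feng does in the cited reference (his setting is more general, treating factor maps between subshifts, but the mechanism is the same). Two places in your sketch are a bit loose and would need care in a full proof. First, in the lower Gibbs bound you derive an inequality of the form $\phi(\mathbf{i})Z_{n-k}\leq C_2(z+1)\max_{\ell}Z_{n+\ell}$ and then assert this ``transfers to'' $\mathfrak{m}_n([\mathbf{i}])\gtrsim\phi(\mathbf{i})e^{-kP(\phi)}$; in fact the bridge construction only furnishes the lower bound for $\mathfrak{m}_{n+\ell}([\mathbf{i}])$ for \emph{some} $\ell\in\{0,\dots,z\}$ depending on $n$, so you must argue that the Ces\`aro averages still inherit a uniform lower bound (they do, since the bounded-length discrepancy is harmless after averaging, but this needs to be said). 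Second, your ergodicity claim relies on a quasi-Bernoulli comparison $\mathfrak{m}([\mathbf{i}]\cap\sigma^{-(|\mathbf{i}|+\ell)}[\mathbf{j}])\asymp\mathfrak{m}([\mathbf{i}])\mathfrak{m}([\mathbf{j}])$ for \emph{some} $\ell\leq z$; deducing ergodicity from this requires a short additional argument (e.g.\ averaging over the finitely many admissible gaps, or passing through the variational principle as Feng does), not just the one-gap estimate by itself.
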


\subsection{Various pressure functions}\label{c98}

By Theorem \ref{y36}, we obtain that
\begin{equation}
\label{u16}
\text{ the limit }
\quad
\lim\limits_{n\to\infty}\frac{1}{n}\log \sum_{\mathbf{ i}\in\mathcal{T}_n}\left( \widehat{\psi}(\mathbf{i}) \right)^t
\quad
\text{exists.}
\end{equation}
Recall that the pressure function $p(t)=P(\psi ^t)$
was defined in \eqref{u98} with the comment that the existence of the limit in \eqref{u98} would be proved later.
Using Property-\ref{w97} and \eqref{u16} we get that
the second equation below holds:
\begin{equation}
\label{u14}
p(t)=\lim\limits_{n\to\infty}\frac{1}{n}\log \sum_{\mathbf{ i}\in\mathcal{T}_n}\left( \psi(\mathbf{i}) \right)^t
=
\lim\limits_{n\to\infty}\frac{1}{n}\log \sum_{\mathbf{ i}\in\mathcal{T}_n}\left( \widehat{\psi}(\mathbf{i}) \right)^t.
\end{equation}
In particular, the first limit (which was defined as $p(t)$ in~\eqref{u98}) exists.

For every $t>0$ we define the potentials
\begin{equation}
\label{y39}
\phi _t(\mathbf{i}):=
\left( \psi(\mathbf{i})\cdot 3^{-|\mathbf{i}|r}
\right)^t  \quad
\text{ and } \quad
\widehat{\phi }_t(\mathbf{i}):=
\left( \widehat{\psi}(\mathbf{i})\cdot 3^{-|\mathbf{i}|r}
\right)^t.
\end{equation}
We obtain from Property-\ref{w98} that
\begin{corollary}\label{y38}
  For every $t>0$ the potential
$ \mathbf{i}\mapsto \widehat{\phi }_t(\mathbf{i}) $
is also quasi-multiplicative.
\end{corollary}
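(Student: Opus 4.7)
The plan is to verify directly that $\widehat{\phi}_t$ satisfies the two conditions (a) and (b) of Definition \ref{u19}, inheriting them from the corresponding properties of $\widehat{\psi}$ which are granted by Property-\ref{w98} of Proposition \ref{u18}. The key structural observation is that the additional factor $3^{-|\mathbf{i}|r}$ appearing in the definition \eqref{y39} of $\widehat{\phi}_t$ is \emph{exactly} multiplicative with respect to concatenation, i.e.\ $3^{-|\mathbf{i}\mathbf{j}|r}=3^{-|\mathbf{i}|r}\cdot 3^{-|\mathbf{j}|r}$, and the map $x\mapsto x^t$ respects products. Hence both conditions transfer with only a harmless adjustment of the constants.

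For condition (a), the positivity is immediate: choose the non-empty word $\pmb{\ell}\in \Sigma_A^*$ with $\widehat{\psi}(\pmb{\ell})>0$ guaranteed by Property-\ref{w98}; then $\widehat{\phi}_t(\pmb{\ell})=\widehat{\psi}(\pmb{\ell})^t\cdot 3^{-|\pmb{\ell}|rt}>0$. For sub-multiplicativity, starting from $\widehat{\psi}(\mathbf{i}\mathbf{j})\leq C_1\widehat{\psi}(\mathbf{i})\widehat{\psi}(\mathbf{j})$, I raise to the $t$-th power and multiply by $3^{-|\mathbf{i}\mathbf{j}|rt}=3^{-|\mathbf{i}|rt}\cdot 3^{-|\mathbf{j}|rt}$ to obtain
\begin{equation*}
\widehat{\phi}_t(\mathbf{i}\mathbf{j})\leq C_1^t\,\widehat{\phi}_t(\mathbf{i})\,\widehat{\phi}_t(\mathbf{j}),
\end{equation*}
so condition (a) holds with constant $C_1^t$.

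For condition (b), I fix $\mathbf{i},\mathbf{j}\in\Sigma_A^*$ and use Property-\ref{w98} to pick the bridging word $\mathbf{k}\in\bigcup_{\ell=1}^{z}\mathcal{T}_\ell\cup\flat$ such that $\mathbf{i}\mathbf{k}\mathbf{j}\in\Sigma_A^*$ and $\widehat{\psi}(\mathbf{i})\widehat{\psi}(\mathbf{j})\leq C_2\widehat{\psi}(\mathbf{i}\mathbf{k}\mathbf{j})$. Raising to the $t$-th power and inserting the scale factors, using $|\mathbf{i}|+|\mathbf{j}|=|\mathbf{i}\mathbf{k}\mathbf{j}|-|\mathbf{k}|$ with $|\mathbf{k}|\leq z$, I get
\begin{equation*}
\widehat{\phi}_t(\mathbf{i})\widehat{\phi}_t(\mathbf{j})\leq C_2^t\,3^{|\mathbf{k}|rt}\,\widehat{\phi}_t(\mathbf{i}\mathbf{k}\mathbf{j})\leq \bigl(C_2^t\cdot 3^{zrt}\bigr)\,\widehat{\phi}_t(\mathbf{i}\mathbf{k}\mathbf{j}),
\end{equation*}
which is exactly condition (b) with the new constant $C_2^t\cdot 3^{zrt}$ and the same integer $z$.

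There is no genuine obstacle here: the statement is essentially the assertion that the class of weak quasi-multiplicative potentials is closed under multiplication by an exactly multiplicative (length-dependent) factor and under the power map $\phi\mapsto \phi^t$, both of which are clear from the arithmetic above. All non-trivial combinatorial content sits already in Property-\ref{w98}; the corollary merely repackages it for $\widehat{\phi}_t$ so that Feng's Theorem \ref{y36} becomes applicable to $\widehat{\phi}_t$ in the subsequent sections.
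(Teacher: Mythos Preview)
Your proof is correct and is exactly the argument the paper has in mind: the corollary is stated there as an immediate consequence of Property-\ref{w98}, with no further details given, and you have simply written out the routine verification that multiplying a weak quasi-multiplicative potential by the exactly length-multiplicative factor $3^{-|\mathbf{i}|r}$ and raising to the $t$-th power preserves both conditions of Definition \ref{u19}, with constants $C_1^t$ and $C_2^t\cdot 3^{zrt}$.
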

We cannot say the same about the potential  $\phi _t$
but by \eqref{u14} the pressure functions of
$
\phi _t
$
and
$
\widehat{\phi }_t
$ are the same:
\begin{equation}
\label{u13}
P(t):=\lim\limits_{n\to\infty}
\frac{1}{n}\log \sum_{\mathbf{j}\in\mathcal{T}_n }
\phi _t(\mathbf{j})
=\lim\limits_{n\to\infty}
\frac{1}{n}\log \sum_{\mathbf{j}\in\mathcal{T}_n }
\widehat{\phi }_t(\mathbf{j})
=p(t)-rt\log 3.
\end{equation}
Using the properties of the function $p(t)$ stated on page \pageref{c92}
we obtain that
$t\mapsto P(t)$ is also strictly decreasing,
$P(0)=p(0)>0  $
and $P(1)=-r\log 3$. So, we obtain that
\begin{fact}\label{W87}
There is a unique
$t_r\in(0,1)$ such that
\begin{equation}
\label{y35}
p(t_r)=rt_0\log 3,\quad
P(t_r)=0, \quad P(t)>0 \text{ if }
t\in [0,t_r) \text{ and }
P(t)<0 \text{ if } t>t_r.
\end{equation}
\end{fact}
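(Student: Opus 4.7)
The plan is to deduce Fact \ref{W87} by applying the intermediate value theorem to $P(t) = p(t) - rt\log 3$ after verifying that $P$ is continuous and strictly decreasing on $[0,1]$ with $P(0)>0>P(1)$.

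\emph{Existence and continuity of $p$.} By Corollary \ref{y38} the potential $\widehat{\phi}_t$ is weak quasi-multiplicative, so Feng's Theorem \ref{y36} supplies the limit $\lim_n \tfrac{1}{n}\log\sum_{\mathcal{T}_n}\widehat{\psi}(\mathbf{i})^t$. Property-\ref{w97} pinches $\widehat{\psi}^t$ between $\psi^t$ and $(C_3 n)^t\psi^t$; after $\tfrac{1}{n}\log$ and $n\to\infty$ the polynomial prefactor disappears, so \eqref{u14} holds and $p(t)$ is well-defined for every $t>0$. At $t=0$, $\sum_{\mathcal{T}_n}\psi^0 = |\mathcal{T}_n|$ grows like the Perron eigenvalue $\lambda_A$ of $A$, giving $p(0)=\log\lambda_A = \dim_{\rm H}\Lambda > 0$. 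H\"older's inequality makes $t\mapsto\log\sum_{\mathcal{T}_n}\psi(\mathbf{i})^t$ convex in $t$ for each $n$, and the limit inherits convexity; hence $p$ is continuous on $(0,\infty)$, with continuity at $0$ extending by a standard convex-function argument.

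\emph{Boundary values.} The map $\Phi:\mathcal{A}^n\to\mathcal{T}_n$ of Definition \ref{w91} is length-preserving and surjective, and by \eqref{y86} its fibers are exactly the classes $\mathcal{I}_{\mathbf{i}}$. Summing probabilities fiberwise,
\[
\sum_{\mathbf{i}\in\mathcal{T}_n}\psi(\mathbf{i})=\sum_{\mathbf{i}\in\mathcal{T}_n}\sum_{\pmb{\eta}\in\mathcal{I}_{\mathbf{i}}}p_{\pmb{\eta}}=\sum_{\pmb{\eta}\in\mathcal{A}^n}p_{\pmb{\eta}}=1,
\]
so $p(1)=0$. This yields $P(0)=p(0)>0$ and $P(1)=-r\log 3<0$.

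\emph{Strict monotonicity and conclusion.} The main obstacle is the strict decrease of $p$, which is not automatic from convexity. The plan is to apply Feng's Theorem \ref{y36} to $\widehat{\psi}^t$ to produce a Gibbs-type invariant measure $\mathfrak{m}_t$ on $\Sigma_A$ satisfying $\mathfrak{m}_t(\mathbf{i})\thickapprox \widehat{\psi}(\mathbf{i})^t e^{-np(t)}$; a standard thermodynamic-formalism computation then gives $p'(t)=\int \log\widehat{\psi}\,d\mathfrak{m}_t<0$, since $\widehat{\psi}\leq 1$ with strict inequality on a set of positive $\mathfrak{m}_t$-measure (as $\sum_{\mathcal{T}_n}\widehat{\psi}\lesssim 1$ precludes $\widehat{\psi}\equiv 1$). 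Consequently $P$ is continuous and strictly decreasing on $[0,1]$ with $P(0)>0>P(1)$, so the intermediate value theorem yields a unique $t_0\in(0,1)$ with $P(t_0)=0$, equivalently $p(t_0)=rt_0\log 3$; the sign statements on $[0,t_0)$ and $(t_0,\infty)$ follow at once from strict monotonicity.
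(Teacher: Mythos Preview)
Your outline matches the paper's exactly: just before Fact~\ref{W87} the paper simply asserts that ``a standard argument shows $t\mapsto p(t)$ is strictly decreasing and continuous, $p(0)=\dim_{\rm H}\Lambda>0$ and $p(1)=0$'', and then invokes the intermediate value theorem. You supply the details the paper omits, and your computation of $p(1)=0$ via the partition $\mathcal{A}^n=\bigcup_{\mathbf{i}\in\mathcal{T}_n}\mathcal{I}_{\mathbf{i}}$ is exactly the right mechanism.

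Two small points. First, the chain $p(0)=\log\lambda_A=\dim_{\rm H}\Lambda$ (which the paper also writes) is a slip: $p(0)=\log\lambda_A$ while $\dim_{\rm H}\Lambda=(\log\lambda_A)/\log 3$; only $p(0)>0$ is used, so this is harmless. Second, your route to strict monotonicity via $p'(t)=\int\log\widehat{\psi}\,d\mathfrak{m}_t$ presupposes differentiability of the pressure for weak quasi-multiplicative potentials, which is true but goes beyond what Theorem~\ref{y36} records; also the parenthetical bound $\sum_{\mathcal{T}_n}\widehat{\psi}\lesssim 1$ should be $\lesssim n$ by Property-\ref{w97}. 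There is a much lighter fix that sidesteps all of this: since the $\mathcal{I}_{\mathbf{i}}$ partition $\mathcal{A}^n$, one has $\psi(\mathbf{i})\le 1$ for every $\mathbf{i}$, so $t\mapsto p(t)$ is non-increasing, and therefore $P(t)=p(t)-rt\log 3$ is the sum of a non-increasing and a strictly decreasing function, hence strictly decreasing. That is all Fact~\ref{W87} actually needs.
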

Finally, we introduce the potential $\widehat{\phi} :\Sigma ^*_A\to [0,\infty  )$,
\begin{equation}
\label{y14}
\widehat{\phi}(\mathbf{i})
:=\widehat{\phi }_{t_r}(\mathbf{i})=\left( \widehat{\psi}(\mathbf{i})\cdot 3^{-|\mathbf{i}|r}
\right)^{t_r}.
\end{equation}
Then, by definition, the pressure of $\widehat{\phi} $ is equal to $0$:
$$
P(\widehat{\phi} )=\lim\limits_{n\to\infty}\log \sum\limits_{\mathbf{i}\in\mathcal{T}_n}
\widehat{\phi} (\mathbf{i})
=0.
$$ Moreover, as a corollary of Feng Theorem  (Theorem \ref{y36}) and Corollary \ref{y38} we obtain:
\begin{proposition}\label{y34}
  There is a $C_4>1$ and a unique invariant ergodic measure
  $\mathfrak{m}$ on $\Sigma _A$ such that
  \begin{equation}
  \label{y33}
  C _{4}^{-1 }<
\frac{\mathfrak{m}([\mathbf{i}])}{\widehat{\phi} (\mathbf{i})}
  <C_4,\quad \text{ for all }\quad \mathbf{i}\in\Sigma _A^*.
  \end{equation}
\end{proposition}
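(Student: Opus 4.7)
The plan is to apply Feng's Theorem (Theorem \ref{y36}) directly to the potential $\widehat{\phi}=\widehat{\phi}_{t_0}$ defined in \eqref{y14}. The hypotheses of Feng's Theorem require $\widehat{\phi}$ to be a weak quasi-multiplicative potential on $\Sigma_A^*$, and this is precisely the content of Corollary \ref{y38} applied at $t=t_0$; note that $t_0\in(0,1)$ is positive by Fact \ref{W87}, so the corollary is applicable.

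Having verified the hypotheses, Feng's Theorem furnishes a unique invariant ergodic measure $\mathfrak{m}$ on $\Sigma_A$ and a constant $c>0$ such that
\begin{equation*}
\tfrac{1}{c}\,\widehat{\phi}(\mathbf{i})\exp\bigl(-|\mathbf{i}|\,P(\widehat{\phi})\bigr)
\leq \mathfrak{m}([\mathbf{i}])
\leq c\,\widehat{\phi}(\mathbf{i})\exp\bigl(-|\mathbf{i}|\,P(\widehat{\phi})\bigr)
\end{equation*}
for every $\mathbf{i}\in\Sigma_A^*$. The key observation that makes the statement clean is that, by the very choice of the exponent $t_0$ together with \eqref{u13} and Fact \ref{W87}, one has $P(\widehat{\phi})=P(t_0)=0$. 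Hence the exponential factor is identically $1$, and the two-sided inequality reduces to
\begin{equation*}
\tfrac{1}{c}\,\widehat{\phi}(\mathbf{i}) \leq \mathfrak{m}([\mathbf{i}]) \leq c\,\widehat{\phi}(\mathbf{i}),
\end{equation*}
which is exactly the conclusion of the proposition with $C_4:=c+1>1$ (or indeed any constant strictly larger than $c$ to get strict inequalities as stated).

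Thus the entire proof is essentially a verification and reassembly: Corollary \ref{y38} supplies the quasi-multiplicativity, Fact \ref{W87} together with \eqref{u13} supplies the vanishing of the pressure, and then Feng's Theorem gives existence, uniqueness, ergodicity and the two-sided comparison in a single stroke. There is no genuine obstacle here; the only thing to be mildly careful about is bookkeeping the difference between the pressure functions $p(t)$ and $P(t)$ (which differ by the term $rt\log 3$), and making sure that the $t_0$ which appears in the definition of $\widehat{\phi}$ is the zero of $P(t)$ rather than of $p(t)$, so that the exponential correction $\exp(-nP(\widehat{\phi}))$ does drop out. Once this is noted, absorbing Feng's implicit constant into $C_4$ finishes the argument.
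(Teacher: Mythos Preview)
Your proposal is correct and follows precisely the route the paper takes: the proposition is stated in the paper as an immediate corollary of Feng's Theorem (Theorem \ref{y36}) together with Corollary \ref{y38}, using that $P(\widehat{\phi})=P(t_0)=0$ so that the exponential factor in \eqref{y83} disappears. Your write-up simply makes explicit the bookkeeping that the paper leaves implicit.
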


\subsection{Finite maximal antichains of $\Sigma_A$ and $\Sigma $}
Let $X$ be either $\Sigma _A$ or $\Sigma $.
A finite collection $\Gamma $ of finite words  $\mathbf{i}$ of $X$ is a
\texttt{finite maximal antichain} of $X$ if for every $\pmb{\omega}\in X$
we can find a unique $\mathbf{i}\in \Gamma $ such that $\pmb{\omega}\in [\mathbf{i}]$.

\begin{definition}\label{u11}
Let $\widetilde{q}_i:=\left( p_i3^{-r} \right)^{t_r}$ and
$\varepsilon _0:=\min\limits_{i\in\mathcal{A}}\widetilde{q}_i$. For an $0<\varepsilon <\varepsilon _0$ we define
\begin{equation}
\label{u10}
\widehat{\Gamma} (\varepsilon )\!:\!=
\Big\{
  \mathbf{i}\in\Sigma ^*_A:
  \widehat{\phi} (\mathbf{i})<\varepsilon ,\
  \forall p<|\mathbf{i}|,\
  \widehat{\phi} (\mathbf{i}|_p)\geq \varepsilon
\Big\} \text{ and }
 \widehat{\Gamma}^- (\varepsilon ):=
 \Big\{ \mathbf{i}^-\!
   :\!\mathbf{i}\in \widehat{\Gamma} (\varepsilon)
 \Big\}.
\end{equation}
\end{definition}
It is clear that both  $\widehat{\Gamma} (\varepsilon )$
and $\widehat{\Gamma}^- (\varepsilon )$
are  maximal antichains of $\Sigma _{A}$. From this and from
\eqref{y33}, we get that
\begin{equation}
\label{a88}
\sum_{\mathbf{ i}\in \widehat{\Gamma}(\varepsilon)}
\widehat{\phi}(\mathbf{i})\leq C_4.
\end{equation}

\begin{lemma}\label{u12}
  There exists a $\gamma '>0$ such that for all
  $0<\varepsilon <\varepsilon _0$, we have
  \begin{equation}
  \label{u09}
  \frac{\sum\limits_{\mathbf{i}\in\widehat{\Gamma }(\varepsilon ) }
 \widehat{\phi}(\mathbf{i}) }
  {\sum\limits_{\mathbf{i}\in\widehat{\Gamma }(\varepsilon ) }
  \widehat{\phi}(\mathbf{i}^-)}>\gamma '.
  \end{equation}
\end{lemma}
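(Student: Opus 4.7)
The plan is to bound both sums by passing to the Feng invariant measure $\mathfrak{m}$ supplied by Proposition \ref{y34}. For any finite maximal antichain $\Gamma\subset\Sigma^*_A$ the cylinders $\{[\mathbf{i}]:\mathbf{i}\in\Gamma\}$ partition $\Sigma_A$ modulo $\mathfrak{m}$-null boundaries, whence $\sum_{\mathbf{i}\in\Gamma}\mathfrak{m}([\mathbf{i}])=1$; combined with $C_4^{-1}<\mathfrak{m}([\mathbf{i}])/\widehat{\phi}(\mathbf{i})<C_4$ this pins the sum $\sum_{\mathbf{i}\in\Gamma}\widehat{\phi}(\mathbf{i})$ between $C_4^{-1}$ and $C_4$. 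Applied to $\Gamma=\widehat{\Gamma}(\varepsilon)$ this immediately gives the lower bound $\sum_{\mathbf{i}\in\widehat{\Gamma}(\varepsilon)}\widehat{\phi}(\mathbf{i})\geq C_4^{-1}$ on the numerator of \eqref{u09}.

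For the denominator, I would regroup the sum by parents,
\begin{equation*}
\sum_{\mathbf{i}\in\widehat{\Gamma}(\varepsilon)}\widehat{\phi}(\mathbf{i}^-)=\sum_{\mathbf{j}\in\widehat{\Gamma}^-(\varepsilon)}n(\mathbf{j})\,\widehat{\phi}(\mathbf{j}),
\end{equation*}
where $n(\mathbf{j}):=|\{\mathbf{i}\in\widehat{\Gamma}(\varepsilon):\mathbf{i}^-=\mathbf{j}\}|\leq|\mathcal{A}|=3$, since a parent admits at most three one-letter extensions in $\Sigma^*_A$. Applying the same invariant-measure argument to the finite maximal antichain $\widehat{\Gamma}^-(\varepsilon)$ yields $\sum_{\mathbf{j}\in\widehat{\Gamma}^-(\varepsilon)}\widehat{\phi}(\mathbf{j})\leq C_4$, so the denominator is at most $3C_4$. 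Dividing this by the previous lower bound produces the uniform constant $\gamma':=1/(3C_4^2)$ and completes the proof.

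The only nontrivial ingredient is the claim that $\widehat{\Gamma}^-(\varepsilon)$ is itself a finite maximal antichain of $\Sigma_A$. Its cylinders cover $\Sigma_A$ automatically because $[\mathbf{i}]\subset[\mathbf{i}^-]$, and the restriction $\varepsilon<\varepsilon_0$ guarantees that every $\mathbf{i}\in\widehat{\Gamma}(\varepsilon)$ has length at least two (the value of $\widehat{\phi}$ at any single letter is bounded below by $\varepsilon_0$), so the empty word never appears as a parent. Incomparability of distinct elements is the delicate step: an inclusion $\mathbf{i}^-\subsetneq(\mathbf{i}')^-$ with $\mathbf{i},\mathbf{i}'\in\widehat{\Gamma}(\varepsilon)$ distinct must be excluded, and here one exploits the threshold definition of $\widehat{\Gamma}(\varepsilon)$ together with the quasi-multiplicative bounds on $\widehat{\phi}$ from Property-\ref{w98} to force the relevant intermediate prefixes of $\mathbf{i}'$ to violate the condition $\widehat{\phi}(\mathbf{i}'|_p)\geq\varepsilon$.
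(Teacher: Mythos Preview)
Your proof is essentially identical to the paper's: both pass to the Feng measure $\mathfrak{m}$ from Proposition~\ref{y34}, lower-bound the numerator by $C_4^{-1}$ using that $\widehat{\Gamma}(\varepsilon)$ is a maximal antichain, upper-bound the denominator by $3C_4$ via the same regrouping by parents (multiplicity at most $3$) together with the maximal-antichain property of $\widehat{\Gamma}^-(\varepsilon)$, and arrive at the same constant $\gamma'=1/(3C_4^2)$.

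One remark on your final paragraph: the paper does not prove the antichain property of $\widehat{\Gamma}^-(\varepsilon)$ inside this lemma either---it is asserted as ``clear'' immediately after Definition~\ref{u11} and then invoked. Your sketched justification of incomparability via quasi-multiplicativity does not quite close, since sub-/super-multiplicativity of $\widehat{\phi}$ does not by itself force $\widehat{\phi}(\mathbf{i}'|_{|\mathbf{i}|})<\varepsilon$ when only one particular one-letter extension of $\mathbf{i}^-$ is known to fall below $\varepsilon$; but this does not create any discrepancy with the paper's argument, which treats that property as given.
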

 \begin{proof}
  Observe that
\begin{equation}
\label{u08}
\sum\limits_{\mathbf{i}\in\widehat{\Gamma }(\varepsilon ) }
  \widehat{\phi}(\mathbf{i}^-) \leq
  3
  \sum\limits_{\mathbf{j}\in\widehat{\Gamma }^-(\varepsilon ) }
  \widehat{\phi}(\mathbf{j})\leq
  3C_4 \sum\limits_{\mathbf{j}\in\widehat{\Gamma }^-(\varepsilon ) }
  \mathfrak{m}([\mathbf{i}])\leq 3C_4.
\end{equation}
Namely, the first inequality follows from the fact that
for every $\mathbf{j}\in\widehat{\Gamma }^-(\varepsilon )$
 there are at most three $\mathbf{i}\in\widehat{\Gamma }(\varepsilon )$
such that $\mathbf{j}=\mathbf{i}^-$. The second inequality is immediate from \eqref{y33}. The third inequality is a consequence of the fact that $\widehat{\Gamma}^- (\varepsilon )$ is a maximal antichain of $\Sigma _A$. On  the other hand,
\begin{equation}
\label{u07}
\sum\limits_{\mathbf{i}\in\widehat{\Gamma }(\varepsilon ) }
 \widehat{\phi}(\mathbf{i})
 \geq
 C _{4}^{-1}
 \sum\limits_{\mathbf{i}\in\widehat{\Gamma }(\varepsilon ) }
 \mathfrak{m}(\mathbf{i})=C _{4}^{-1}.
\end{equation}
  Putting together \eqref{u08} and \eqref{u07} we obtain that
  \eqref{u09} holds with the choice of $\gamma ':=\frac{1}{3C _{4}^{2}}$.
 \end{proof}
 \begin{corollary}\label{u06} Let $0<\varepsilon <\varepsilon _0$. Then, we have
\begin{equation}
\label{u05}
\# \widehat{\Gamma }(\varepsilon )\leq
  \frac{3C_4^3}{\varepsilon }.
\end{equation}
 \end{corollary}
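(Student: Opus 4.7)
The plan is to combine three facts that are already available: the bound \eqref{a88} saying $\sum_{\mathbf{i}\in\widehat{\Gamma}(\varepsilon)} \widehat{\phi}(\mathbf{i}) \leq C_4$, the ratio estimate from Lemma \ref{u12} giving $\sum_{\mathbf{i}\in\widehat{\Gamma}(\varepsilon)} \widehat{\phi}(\mathbf{i}) > \gamma' \sum_{\mathbf{i}\in\widehat{\Gamma}(\varepsilon)} \widehat{\phi}(\mathbf{i}^-)$ with $\gamma' = \tfrac{1}{3C_4^2}$, and the defining property of $\widehat{\Gamma}(\varepsilon)$ which forces each $\mathbf{i}^-$ to be ``large''.

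First I would argue that the parent word $\mathbf{i}^-$ is well-defined and non-empty for every $\mathbf{i}\in\widehat{\Gamma}(\varepsilon)$ when $0<\varepsilon<\varepsilon_0$. A one-letter word $(i)$ satisfies $\widehat{\phi}((i)) \geq (p_i 3^{-r})^{t_0} = \widetilde{q}_i \geq \varepsilon_0 > \varepsilon$, so it cannot lie in $\widehat{\Gamma}(\varepsilon)$. Hence every $\mathbf{i}\in\widehat{\Gamma}(\varepsilon)$ has $|\mathbf{i}|\geq 2$, and by the defining condition in \eqref{u10} applied with $p = |\mathbf{i}|-1$ we get $\widehat{\phi}(\mathbf{i}^-)\geq \varepsilon$. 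Summing over $\widehat{\Gamma}(\varepsilon)$ yields
\begin{equation*}
\sum_{\mathbf{i}\in\widehat{\Gamma}(\varepsilon)} \widehat{\phi}(\mathbf{i}^-) \;\geq\; \varepsilon\cdot \#\widehat{\Gamma}(\varepsilon).
\end{equation*}

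The second step just chains the three inequalities together. By Lemma \ref{u12} and then \eqref{a88},
\begin{equation*}
\gamma'\cdot \varepsilon\cdot \#\widehat{\Gamma}(\varepsilon)
\;\leq\;
\gamma'\sum_{\mathbf{i}\in\widehat{\Gamma}(\varepsilon)} \widehat{\phi}(\mathbf{i}^-)
\;<\;
\sum_{\mathbf{i}\in\widehat{\Gamma}(\varepsilon)} \widehat{\phi}(\mathbf{i})
\;\leq\; C_4,
\end{equation*}
so $\#\widehat{\Gamma}(\varepsilon) \leq C_4/(\gamma'\varepsilon) = 3C_4^3/\varepsilon$, which is the claimed bound.

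There is no serious obstacle here; the only subtlety is the edge case $|\mathbf{i}|=1$, which is handled by the observation above. Everything else is a direct assembly of the preceding lemma, the already-proved estimate \eqref{a88}, and the definition of $\widehat{\Gamma}(\varepsilon)$.
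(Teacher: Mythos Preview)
Your proof is correct and follows essentially the same route as the paper: both combine the upper bound $\sum_{\mathbf{i}\in\widehat{\Gamma}(\varepsilon)}\widehat{\phi}(\mathbf{i})\leq C_4$, Lemma~\ref{u12} with $\gamma'=\tfrac{1}{3C_4^2}$, and the defining inequality $\widehat{\phi}(\mathbf{i}^-)\geq\varepsilon$ to conclude. Your treatment is slightly more careful than the paper's in that you explicitly justify why $|\mathbf{i}|\geq 2$ (so that $\mathbf{i}^-$ is a genuine word to which the defining condition applies), a point the paper leaves implicit.
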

 \begin{proof}
   It follows from \eqref{y33} that
   $C_4> \sum\limits_{\mathbf{i}\in\widehat{\Gamma }(\varepsilon ) }
   \widehat{\phi}(\mathbf{i})$. Putting together this, \eqref{u09},
   and  the fact that   $\widehat{\phi}(\mathbf{i}^-)\geq \varepsilon $,
   we obtain
   $$
C_4>\gamma ' \sum\limits_{\mathbf{i}\in\widehat{\Gamma }(\varepsilon ) }
\widehat{\phi}(\mathbf{i}^-)
>
\gamma '\varepsilon\# \widehat{\Gamma }(\varepsilon ) .
   $$
   This implies that \eqref{u05} holds since $\gamma '=\frac{1}{3C _{4}^{2}}$.
 \end{proof}
 Although $\widehat{\Gamma }(\varepsilon )$ is a finite maximal antichain the same is not true (in general) for
 $
\bigcup\limits_{\mathbf{i}\in \widehat{\Gamma }(\varepsilon ) }
\mathcal{I}_{\mathbf{i}}
 $, where we defined $\mathcal{I}_{\mathbf{i}}$ in \eqref{u99}.
So, we need to introduce one more step:
\begin{definition}\label{a98}
\begin{equation}
\label{u02}
\Theta(\varepsilon ):
 =
 \left\{ \mathbf{i}^-0:
 \mathbf{i}\in\widehat{\Gamma} (\varepsilon ) \text{ and }
  i_{|\mathbf{i}|}=1
  \right\}\subset
  \Sigma _{A}^{*}.
\end{equation}
Moreover, let
\begin{equation}
\label{x99}
\widehat{\Gamma} _E(\varepsilon ):=
\widehat{\Gamma} (\varepsilon )\cup \Theta(\varepsilon )
\quad
\text{ and }
\quad
\widehat{\Gamma}_{\Sigma}(\varepsilon):=
\bigcup\limits_{\mathbf{j}\in  \widehat{\Gamma}_E(\varepsilon) }
\mathcal{I}_{\mathbf{j}}.
\end{equation}
\end{definition}

The following claim states a simple but important property.
\begin{claim}\label{a83}
  For all $\mathbf{i}\in \widehat{\Gamma}_E(\varepsilon)$
  we have $\widehat{\phi}(\mathbf{i})<\varepsilon $.
\end{claim}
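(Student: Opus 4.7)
The plan is to split into two cases according to the definition of $\widehat{\Gamma}_E(\varepsilon) = \widehat{\Gamma}(\varepsilon) \cup \Theta(\varepsilon)$. The first case is trivial, and the second case reduces to an immediate comparison built into the very definition of $\widehat{\psi}$.

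For the first case, suppose $\mathbf{i} \in \widehat{\Gamma}(\varepsilon)$. Then the bound $\widehat{\phi}(\mathbf{i}) < \varepsilon$ is literally the defining condition of $\widehat{\Gamma}(\varepsilon)$ in \eqref{u10}, so nothing needs to be done.

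For the second case, suppose $\mathbf{j} \in \Theta(\varepsilon)$, so $\mathbf{j} = \mathbf{i}^- 0$ for some $\mathbf{i} \in \widehat{\Gamma}(\varepsilon)$ with $i_{|\mathbf{i}|} = 1$. First I would check that $\mathbf{j} \in \Sigma_A^*$: since $\mathbf{i}^- \in \Sigma_A^*$, adjoining $0$ at the end cannot create the forbidden pattern $(0,3)$, so $\mathbf{j}$ is legal. Next, observe that $|\mathbf{j}| = |\mathbf{i}|$, so the geometric factor $3^{-|\mathbf{j}|r t_0}$ in the definition \eqref{y14} of $\widehat{\phi}$ is the same for $\mathbf{j}$ and for $\mathbf{i}$. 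It therefore suffices to compare $\widehat{\psi}(\mathbf{j})$ with $\widehat{\psi}(\mathbf{i})$. Since the last digit of $\mathbf{j} = \mathbf{i}^- 0$ is $0 \ne 1$, the first clause of \eqref{a91} does not apply and hence
\[
\widehat{\psi}(\mathbf{i}^- 0) \;=\; \psi(\mathbf{i}^- 0).
\]
On the other hand, since the last digit of $\mathbf{i}$ is $1$, the definition \eqref{a91} gives
\[
\widehat{\psi}(\mathbf{i}) \;=\; \max\bigl\{\psi(\mathbf{i}),\, \psi(\mathbf{i}^- 0)\bigr\} \;\geq\; \psi(\mathbf{i}^- 0) \;=\; \widehat{\psi}(\mathbf{i}^- 0).
\]

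Raising this inequality to the power $t_0 > 0$ and multiplying by the common factor $3^{-|\mathbf{i}|r t_0}$ gives $\widehat{\phi}(\mathbf{i}^- 0) \leq \widehat{\phi}(\mathbf{i}) < \varepsilon$, where the last inequality uses Case 1. This completes the argument. There is essentially no obstacle here: the entire point of defining $\widehat{\psi}$ via the maximum in \eqref{a91} was precisely to make the dominance $\widehat{\psi}(\mathbf{i}^- 0) \leq \widehat{\psi}(\mathbf{i})$ automatic whenever $i_{|\mathbf{i}|} = 1$, so that the auxiliary words adjoined in $\Theta(\varepsilon)$ inherit the bound $< \varepsilon$ from $\widehat{\Gamma}(\varepsilon)$ for free.
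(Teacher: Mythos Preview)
Your proof is correct and follows essentially the same approach as the paper's own argument: both split on the two pieces of $\widehat{\Gamma}_E(\varepsilon)$, dispose of $\widehat{\Gamma}(\varepsilon)$ by definition, and then use the built-in comparison $\widehat{\psi}(\mathbf{i}^-0)=\psi(\mathbf{i}^-0)\leq\max\{\psi(\mathbf{i}^-0),\psi(\mathbf{i})\}=\widehat{\psi}(\mathbf{i})$ from \eqref{a91} for the $\Theta(\varepsilon)$ case. Your extra remark that $\mathbf{i}^-0\in\Sigma_A^*$ is a harmless addition the paper leaves implicit.
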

\begin{proof}
  We need to check this only for $\mathbf{i}\in \Theta(\varepsilon )$ since by definition $\widehat{\phi}(\mathbf{i})<\varepsilon $ holds for all $\mathbf{i}\in \widehat{\Gamma}(\varepsilon)$. If $\mathbf{i}\in \Theta(\varepsilon)$, then $\mathbf{i}=\mathbf{i}^-0$ and $\mathbf{i}^-1\in \widehat{\Gamma}(\varepsilon)$. By \eqref{a91},
  $$
  \widehat{\psi}(\mathbf{i})=
  \psi(\mathbf{i})=\psi(\mathbf{i}^-0)\leq
  \max\{\psi(\mathbf{i}^-0),\psi(\mathbf{i}^-1)\}=\widehat{\psi}(\mathbf{i}^-1).
  $$
  So, we get that
\begin{equation}
\label{a82}
\forall \mathbf{i}\in \Theta(\varepsilon ),\quad
\widehat{\phi}(\mathbf{i})= \left(\widehat{\psi}(\mathbf{i})
\cdot 3^{-|\mathbf{i}|r}  \right)^{t_r}
\leq
\left(\widehat{\psi}(\mathbf{i}^-1)
\cdot 3^{-|\mathbf{i}|r}  \right)^{t_r}
=
\widehat{\phi}(\mathbf{i}^-1)<\varepsilon,
\end{equation}
  where in the last step we used that $\mathbf{i}^-1\in \widehat{\Gamma}(\varepsilon)$.
\end{proof}
We will use the following immediate consequence of Claim \ref{a83}:
\begin{equation}
\label{a81}
\left(\psi(\mathbf{i}) 3^{-|\mathbf{i}|r}  \right)^{t_r}
\leq
\left(\widehat{\psi}(\mathbf{i}) 3^{-|\mathbf{i}|r}  \right)^{t_r}
=
\widehat{\phi}(\mathbf{i})<\varepsilon
\text{ holds for all }
 \mathbf{i}\in \widehat{\Gamma}_E(\varepsilon).
\end{equation}

\begin{lemma}\label{a99}
For every $0<\varepsilon <\varepsilon _0$, we have
\begin{equation}
\label{a95}
\Sigma = \bigcup\limits_{\pmb{\tau}\in \widehat{\Gamma}_{\Sigma}(\varepsilon)}
[ \pmb{\tau} ].
\end{equation}
\end{lemma}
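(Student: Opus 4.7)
The plan is to verify the non-trivial inclusion $\Sigma \subset \bigcup_{\pmb{\tau} \in \widehat{\Gamma}_{\Sigma}(\varepsilon)} [\pmb{\tau}]$ by showing that every $\pmb{\eta} \in \Sigma$ admits a prefix lying in $\widehat{\Gamma}_{\Sigma}(\varepsilon)$ (the reverse inclusion is immediate since each cylinder $[\pmb{\tau}] \subset \Sigma$). First, pass from $\pmb{\eta}$ to its $\Phi$-image $\mathbf{i} := \Phi(\pmb{\eta}) \in \Sigma_A$ and observe that, because $\widehat{\Gamma}(\varepsilon)$ is a maximal antichain of $\Sigma_A$ (as noted just after Definition \ref{u11}), there is a unique level $z \geq 1$ at which the prefix $\mathbf{j} := \mathbf{i}|_z$ belongs to $\widehat{\Gamma}(\varepsilon)$.

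Next, apply Property-\ref{w93} of Proposition \ref{u18} with this $\pmb{\eta}$ and $z$ to conclude $\pmb{\eta}|_z \in \mathcal{I}_{\mathbf{j}} \cup \mathcal{I}_{\mathbf{j}^-0}$. If $\pmb{\eta}|_z \in \mathcal{I}_{\mathbf{j}}$, then $\pmb{\eta}|_z \in \widehat{\Gamma}_{\Sigma}(\varepsilon)$ at once since $\mathbf{j} \in \widehat{\Gamma}(\varepsilon) \subset \widehat{\Gamma}_E(\varepsilon)$. Otherwise $\pmb{\eta}|_z \in \mathcal{I}_{\mathbf{j}^-0}$ and the task reduces to checking $\mathbf{j}^-0 \in \widehat{\Gamma}_E(\varepsilon)$: inspecting the last letter $j_z$, if $j_z = 0$ then $\mathbf{j}^-0 = \mathbf{j} \in \widehat{\Gamma}(\varepsilon)$, while if $j_z = 1$ then $\mathbf{j}^-0 \in \Theta(\varepsilon)$ by \eqref{u02}.

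The residual case $j_z = 3$ paired with $\pmb{\eta}|_z \in \mathcal{I}_{\mathbf{j}^-0}$ is the main obstacle, and the strategy is to rule it out. Since the substitution rules in Definition \ref{w91} only insert $0$s and $1$s (never a $3$), the condition $j_z = \Phi(\pmb{\eta})_z = 3$ forces $\eta_z = 3$; thus $\pmb{\eta}|_z$ ends in $3$ while $\mathbf{j}^-0$ ends in $0$, yet both share the same $\Pi$-projection. Analyzing the last maximal block of disagreement between $\pmb{\eta}|_z$ and $\mathbf{j}^-0$ via Fact \ref{w92}, one obtains an index $m$ with $0 \leq m < z-1$ such that $(\eta_{m+1}, \ldots, \eta_z) = (0, \overline{3}^{z-m-1})$. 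Applying Definition \ref{w91} to this $(0, 3, 3, \dots)$ block in $\pmb{\eta}$ then produces $\Phi(\pmb{\eta})_z = 1$ (if the run of $3$s extends past $z$ or is infinite) or $\Phi(\pmb{\eta})_z = 0$ (if it terminates at $z$), contradicting $j_z = 3$. This completes the case analysis and yields $\pmb{\eta}|_z \in \widehat{\Gamma}_{\Sigma}(\varepsilon)$, so $\pmb{\eta} \in [\pmb{\eta}|_z]$ as required.
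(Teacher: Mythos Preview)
Your proof is correct and follows the same skeleton as the paper: pick $\pmb{\eta}\in\Sigma$, use that $\widehat{\Gamma}(\varepsilon)$ is a maximal antichain of $\Sigma_A$ to find the prefix $\mathbf{j}=\Phi(\pmb{\eta})|_z\in\widehat{\Gamma}(\varepsilon)$, and then invoke Property-\ref{w93} to place $\pmb{\eta}|_z$ in $\mathcal{I}_{\mathbf{j}}\cup\mathcal{I}_{\mathbf{j}^-0}$. The paper's proof stops there, leaving the reader to check that $\mathbf{j}^-0\in\widehat{\Gamma}_E(\varepsilon)$ whenever it is needed; you carry out this check explicitly by a case split on the last letter $j_z$. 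The only genuine divergence is in how you dispose of the case $j_z=3$: the paper's route is to appeal to Fact~\ref{y13}(b), which already records that $\pmb{\eta}|_z\in\mathcal{I}_{\mathbf{j}^-0}\setminus\mathcal{I}_{\mathbf{j}}$ forces $\Phi(\pmb{\eta})_z=1$, whereas you rederive this by hand via Fact~\ref{w92} and the substitution rule in Definition~\ref{w91}. Both arguments are valid; yours is self-contained, the paper's is shorter once Fact~\ref{y13} is in place.
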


\begin{proof}
 Let $\pmb{\eta}\in \Sigma $. Then, there  is a unique $\mathbf{i}\in
 \widehat{\Gamma}(\varepsilon)$ such that
 $\Phi (\pmb{\eta})\in [\mathbf{i}]$.
  Let $n:=|\mathbf{i}|$.
Then, $\mathbf{i}=\Phi (\pmb{\eta})|_n$. It follows from
Property-\ref{w93} that
$\pmb{\eta}|_n\in \mathcal{I}_{\mathbf{i}}\cup
\mathcal{I}_{\mathbf{i}^-0}$.
\end{proof}
For $\mathbf{i},\mathbf{j}\in \Sigma  _{A}^{ *}$ we say that $\mathbf{i}$ is a proper prefix of $\mathbf{j}$ if $|\mathbf{i}|<|\mathbf{j}|$
and $\mathbf{j}|_{|\mathbf{i}|}=\mathbf{i}$. In this case we write
$\mathbf{i} \precneqq \mathbf{j}$. Let
\begin{equation}
\label{a94}
\widetilde{\Gamma}_{\Sigma}(\varepsilon)
:=
\left\{ \mathbf{j}\in \widehat{\Gamma}_{\Sigma}(\varepsilon) :
\nexists\  \mathbf{i}\in \widehat{\Gamma}_{\Sigma}(\varepsilon) \text{ such that }
\mathbf{i} \precneqq \mathbf{j}
 \right\}.
\end{equation}
The elements of $\widetilde{\Gamma}_{\Sigma}(\varepsilon)$
are incomparable and by \eqref{a95} we have $\Sigma = \bigcup\limits_{\pmb{\tau}\in \widetilde{\Gamma}_{\Sigma}(\varepsilon)}
[ \pmb{\tau} ]$. In this way we have proved that
\begin{fact}\label{a93}
  The collection of cylinders $\widetilde{\Gamma}_{\Sigma}(\varepsilon)$
  is a finite maximal antichain for $\Sigma $. That is, for every $\pmb{\eta}\in \Sigma $ there is a unique $\mathbf{i}\in \widetilde{\Gamma}_{\Sigma}(\varepsilon)$ such that $\pmb{\eta}\in [\mathbf{i}]$.
\end{fact}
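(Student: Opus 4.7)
The plan is to verify the three ingredients that make up the definition of a \emph{finite maximal antichain}: finiteness, pairwise incomparability, and unique-prefix coverage of $\Sigma$. All the work has essentially been done in Lemma \ref{a99}, Corollary \ref{u06} and the definition of $\widetilde{\Gamma}_{\Sigma}(\varepsilon)$; what remains is to assemble them.

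First I would settle finiteness. By Corollary \ref{u06}, $\#\widehat{\Gamma}(\varepsilon)\leq 3C_4^3/\varepsilon$, so $\widehat{\Gamma}_E(\varepsilon)=\widehat{\Gamma}(\varepsilon)\cup\Theta(\varepsilon)$ satisfies $\#\widehat{\Gamma}_E(\varepsilon)\leq 2\cdot 3C_4^3/\varepsilon<\infty$. Each $\mathcal{I}_{\mathbf{j}}\subset \mathcal{A}^{|\mathbf{j}|}$ is a finite set, hence
\[
\widehat{\Gamma}_{\Sigma}(\varepsilon)=\bigcup_{\mathbf{j}\in \widehat{\Gamma}_E(\varepsilon)}\mathcal{I}_{\mathbf{j}}
\]
is a finite union of finite sets, and therefore so is its subset $\widetilde{\Gamma}_{\Sigma}(\varepsilon)$.

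Next I would handle incomparability. If $\mathbf{i},\mathbf{j}\in \widetilde{\Gamma}_{\Sigma}(\varepsilon)$ with $\mathbf{i}\precneqq \mathbf{j}$, then $\mathbf{i}\in \widehat{\Gamma}_{\Sigma}(\varepsilon)$ is a proper prefix of $\mathbf{j}$, contradicting the defining property \eqref{a94} of $\widetilde{\Gamma}_{\Sigma}(\varepsilon)$. Hence any two distinct elements of $\widetilde{\Gamma}_{\Sigma}(\varepsilon)$ are incomparable in the prefix order.

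Finally, for the unique-prefix property, fix $\pmb{\eta}\in \Sigma$. By Lemma \ref{a99} the set $P(\pmb{\eta}):=\{\pmb{\tau}\in \widehat{\Gamma}_{\Sigma}(\varepsilon): \pmb{\eta}\in [\pmb{\tau}]\}$ is nonempty. Choose $\pmb{\tau}^\ast\in P(\pmb{\eta})$ of minimal length. If some $\mathbf{i}\in \widehat{\Gamma}_{\Sigma}(\varepsilon)$ satisfied $\mathbf{i}\precneqq \pmb{\tau}^\ast$, then $\mathbf{i}$ would also be a prefix of $\pmb{\eta}$, placing $\mathbf{i}\in P(\pmb{\eta})$ with $|\mathbf{i}|<|\pmb{\tau}^\ast|$, contradicting minimality. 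Thus $\pmb{\tau}^\ast\in \widetilde{\Gamma}_{\Sigma}(\varepsilon)$, giving existence. For uniqueness, suppose $\pmb{\tau}_1,\pmb{\tau}_2\in \widetilde{\Gamma}_{\Sigma}(\varepsilon)$ are both prefixes of $\pmb{\eta}$; since they are prefixes of the same infinite word, one must be a prefix of the other, so by the incomparability established above $\pmb{\tau}_1=\pmb{\tau}_2$.

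The only step that required any input beyond unwinding definitions was finiteness, and this is not an obstacle: the key quantitative estimate $\#\widehat{\Gamma}(\varepsilon)=O(1/\varepsilon)$ is already in hand from Corollary \ref{u06}. Thus the proof reduces to a clean three-line assembly of Lemma \ref{a99}, Corollary \ref{u06}, and the minimality in \eqref{a94}.
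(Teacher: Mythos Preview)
Your proof is correct and follows the same approach as the paper. The paper's argument is the single sentence preceding Fact~\ref{a93}: incomparability is immediate from definition~\eqref{a94}, and covering follows from~\eqref{a95} since passing from $\widehat{\Gamma}_{\Sigma}(\varepsilon)$ to $\widetilde{\Gamma}_{\Sigma}(\varepsilon)$ only discards cylinders already contained in a retained one; you have simply spelled out the finiteness and uniqueness steps that the paper leaves implicit.
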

\begin{lemma}\label{a92}For an $0<\varepsilon <\varepsilon _0$, we have
\begin{equation}
\label{a90}
\sum_{\mathbf{i}\in \widehat{\Gamma}_E(\varepsilon)} \widehat{\phi}(\mathbf{i})\leq 2C_4.
\end{equation}
\end{lemma}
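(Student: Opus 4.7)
The plan is to split the sum according to the decomposition $\widehat{\Gamma}_E(\varepsilon)=\widehat{\Gamma}(\varepsilon)\cup\Theta(\varepsilon)$ from \eqref{x99} and bound each piece by $C_4$ separately. For the first piece the bound is already available: since $\widehat{\Gamma}(\varepsilon)$ is a finite maximal antichain of $\Sigma _A$, inequality \eqref{a88} delivers $\sum_{\mathbf{i}\in\widehat{\Gamma}(\varepsilon)}\widehat{\phi}(\mathbf{i})\leq C_4$ for free.

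For the second piece I would exploit the fact that, by the very definition \eqref{u02} of $\Theta(\varepsilon)$, the assignment $\mathbf{j}\mapsto\mathbf{j}^-0$ is a bijection from $\{\mathbf{j}\in\widehat{\Gamma}(\varepsilon):j_{|\mathbf{j}|}=1\}$ onto $\Theta(\varepsilon)$, with inverse $\mathbf{i}\mapsto\mathbf{i}^-1\in\widehat{\Gamma}(\varepsilon)$. Combining this bijection with the pointwise inequality \eqref{a82}, which says $\widehat{\phi}(\mathbf{i})\leq\widehat{\phi}(\mathbf{i}^-1)$ for $\mathbf{i}\in\Theta(\varepsilon)$, I can dominate
\[
\sum_{\mathbf{i}\in\Theta(\varepsilon)}\widehat{\phi}(\mathbf{i})
=\sum_{\substack{\mathbf{j}\in\widehat{\Gamma}(\varepsilon)\\ j_{|\mathbf{j}|}=1}}\widehat{\phi}(\mathbf{j}^-0)
\leq \sum_{\substack{\mathbf{j}\in\widehat{\Gamma}(\varepsilon)\\ j_{|\mathbf{j}|}=1}}\widehat{\phi}(\mathbf{j})
\leq \sum_{\mathbf{j}\in\widehat{\Gamma}(\varepsilon)}\widehat{\phi}(\mathbf{j})\leq C_4,
\]
the last step being another application of \eqref{a88}.

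Adding the two estimates and using the trivial bound $\sum_{\mathbf{i}\in A\cup B}f(\mathbf{i})\leq\sum_{\mathbf{i}\in A}f(\mathbf{i})+\sum_{\mathbf{i}\in B}f(\mathbf{i})$ for nonnegative $f$ (valid irrespective of whether $A,B$ are disjoint) gives the constant $2C_4$ claimed in \eqref{a90}. I do not foresee any real obstacle: the only conceptual point is that \eqref{a82} was set up precisely so that each new word in $\Theta(\varepsilon)$ can be charged to its ``parent'' $\mathbf{i}^-1\in\widehat{\Gamma}(\varepsilon)$, which is exactly what makes both halves of $\widehat{\Gamma}_E(\varepsilon)$ controllable by the same antichain bound \eqref{a88}.
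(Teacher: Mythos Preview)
Your proof is correct and follows essentially the same approach as the paper's: both split $\widehat{\Gamma}_E(\varepsilon)$ into $\widehat{\Gamma}(\varepsilon)$ and $\Theta(\varepsilon)$, bound the first sum by $C_4$ via \eqref{a88}, and bound the second by comparing $\widehat{\phi}(\mathbf{j}^-0)\leq\widehat{\phi}(\mathbf{j}^-1)=\widehat{\phi}(\mathbf{j})$ for each $\mathbf{j}\in\widehat{\Gamma}(\varepsilon)$ with last digit $1$. The only cosmetic difference is that the paper cites \eqref{y39} and \eqref{a91} directly for this pointwise comparison, whereas you invoke the already-packaged inequality \eqref{a82} from Claim~\ref{a83}.
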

 \begin{proof}
  Observe that
\begin{equation}
\label{a89}
\sum\limits_{\mathbf{i}\in \widehat{\Gamma}_E(\varepsilon)} \widehat{\phi}(\mathbf{i})\leq\underbrace{
\sum\limits_{\mathbf{i}\in \widehat{\Gamma}(\varepsilon)  }\widehat{\phi}(\mathbf{i})}_{S_1}
+
\underbrace{
\sum\limits_{\mathbf{i}\in \widehat{\Gamma}(\varepsilon) : i|_{|\mathbf{i}|= 1} } \widehat{\phi}(\mathbf{i}^-0)}_{S_2}.
\end{equation}
Putting together formulas \eqref{y39} and \eqref{a91} we get that $S_2\leq S_1$. This and \eqref{a88} together imply that
\eqref{a90} holds.
 \end{proof}

 \begin{definition}\label{y00}
  Using the notation introduced in Definition \ref{u11}
we choose $m,n\in\mathbb{N}$ satisfying
\begin{equation}
\label{y06}
\frac{m}{n}<\varepsilon_0 ^2
\quad \text{ and }\quad
\varepsilon :=6\frac{m}{n}C_4^3.
\end{equation}
Fix such an $m,n$ and $\varepsilon $ for the rest of this section.
\end{definition}
Then, we have from Lemma \ref{a92} and Corollary \ref{u06} that
\begin{equation}
\label{a87}
\# \widehat{\Gamma}_E(\varepsilon)\leq \frac{n}{m}.
\end{equation}
Namely, Corollary \ref{u06} we have $\# \widehat{\Gamma}_E(\varepsilon)\leq 2 \# \widehat{\Gamma}(\varepsilon)\leq \frac{6 C _{4 }^{3 }}{\varepsilon }=\frac{n}{m}$ by the choice of $\varepsilon $.

 \section{The upper estimate for the quantization dimension of the measure $\nu $}\label{y16}
 From now on we follow \cite{GL1}
  and \cite{R3}.
 Recall that $\nu=\Pi _*\mu  $, where $\mu=\mathbf{p}^{\mathbb{N}}$ is the infinite product measure on $\mathcal{A}^{\mathbb{N}}$. Also recall that
\begin{equation}
\label{w87}
V_{n, r}(\nu ):=\inf \left\{\int d(x, \alpha)^r d \nu(x): \alpha \subset \mathbb{R}^{d}, \operatorname{card}(\alpha) \leq n\right\},
\end{equation}
 where $d(x, \alpha)$ denotes the distance from the point $x$ to the set $\alpha$.
 Recall that we introduced
 $C_{n,r}(\mu )$ in \eqref{a85}. It was proved in
\cite{GL1} that $C_{n,r}(\mu )\ne \emptyset $.

In the argument below we will use frequently the potential $\widehat{\phi}(\mathbf{i}) $ which was defined in \eqref{y14},
 the set $\mathcal{I}_{\mathbf{i}}$ which was defined in \eqref{u99},
 the constant  $C_4$ defined in \eqref{y33}.

\subsection{An upper estimate for $V_{n,r}(\nu)$}

First we need to prove a Fact which is a slight modification of \cite[Lemma 4.14]{GL1}.

\begin{fact}\label{a84}
  Let $\mu ,\mu _i$ for $i=1,\dots  ,L$ be Borel probability measures on $\mathbb{R}$
  such that $\int\|x\|^rd\mu _i(x)<\infty  $ for all
  $i=1,\dots  ,L$.
  We are given $q_1,\dots  ,q_L$ positive numbers such that
  \begin{equation}
  \label{x84}
  \mu \leq \sum_{m=1}^{L}q_m\mu _m.
  \end{equation}
  Moreover, we are given natural numbers $\left\{ n_i \right\} _{i=1}^{L}$ such that $n_i\geq 1$ and
$\sum_{i=1}^{L}n_i\leq n$. Then,
\begin{equation}
\label{x83}
V_{n,r}(\mu)\leq
\sum_{i=1}^{L} q_i V_{n_i,r}(\mu_i).
\end{equation}
\end{fact}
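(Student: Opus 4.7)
The plan is to use the standard construction of a combined quantizer from individual optimal quantizers and exploit the measure domination hypothesis \eqref{x84} by integrating a non-negative function against it. This is a direct modification of the argument in Graf--Luschgy.

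First, for each $i=1,\dots,L$, I would invoke the existence of an optimal quantizer: since $\int \|x\|^r\, d\mu_i(x) < \infty$, there exists a set $\alpha_i \subset \mathbb{R}$ with $\mathrm{card}(\alpha_i) \leq n_i$ such that $V_{n_i,r}(\mu_i) = \int d(x,\alpha_i)^r\, d\mu_i(x)$. Set $\alpha := \bigcup_{i=1}^{L} \alpha_i$, so that $\mathrm{card}(\alpha) \leq \sum_{i=1}^L n_i \leq n$. Thus $\alpha$ is a candidate in the infimum defining $V_{n,r}(\mu)$ in \eqref{w87}.

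Next, since $x \mapsto d(x,\alpha)^r$ is a non-negative measurable function and the hypothesis \eqref{x84} states $\mu \leq \sum_{m=1}^L q_m \mu_m$ as Borel measures, integration yields
\begin{equation*}
V_{n,r}(\mu) \;\leq\; \int d(x,\alpha)^r\, d\mu(x) \;\leq\; \sum_{m=1}^L q_m \int d(x,\alpha)^r\, d\mu_m(x).
\end{equation*}
Then I would use the monotonicity $d(x,\alpha) \leq d(x,\alpha_m)$, which holds because $\alpha_m \subseteq \alpha$, to conclude
\begin{equation*}
\sum_{m=1}^L q_m \int d(x,\alpha)^r\, d\mu_m(x) \;\leq\; \sum_{m=1}^L q_m \int d(x,\alpha_m)^r\, d\mu_m(x) \;=\; \sum_{m=1}^L q_m V_{n_m,r}(\mu_m),
\end{equation*}
which is precisely \eqref{x83}.

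There is no real obstacle here; the only subtlety is making sure the optimal set of at most $n_i$ means exists (guaranteed by the assumed finite $r$-th moment and the results cited from \cite{GL1}) and verifying that measure domination lifts to integral domination for a non-negative integrand, which is immediate from the monotone class or from writing $d(x,\alpha)^r$ as a monotone limit of simple functions.
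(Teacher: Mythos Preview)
Your proof is correct and follows essentially the same approach as the paper: pick optimal sets $\alpha_i\in\mathcal{C}_{n_i,r}(\mu_i)$, form $\alpha=\bigcup_i\alpha_i$, use the measure domination \eqref{x84} to bound $\int d(x,\alpha)^r\,d\mu$ by $\sum_i q_i\int d(x,\alpha)^r\,d\mu_i$, and then enlarge $d(x,\alpha)$ to $d(x,\alpha_i)$. The only difference is that you spell out the justifications (existence of optimal sets, integral monotonicity for non-negative integrands) a bit more explicitly than the paper does.
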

\begin{proof}
  Let $\alpha _i\in C_{n_i,r}(\mu _i)$ for $i=1,\dots  ,L$ and $\alpha :=\cup _{i=1}^{L }\alpha _i$.
  \begin{eqnarray}
    V_{n,r}(\mu)  &\leq&
    \int \min_{a\in\alpha }\|x-a\|^rd\mu (x)\leq
    \sum_{i=1}^{L }q_i
    \int \min_{a\in\alpha }\|x-a\|^rd\mu_i (x)
    \\
    &\leq& \sum_{i=1}^{L }q_i
    \int \min_{a\in\alpha_i }\|x-a\|^rd\mu_i (x)
    =
\sum_{i=1}^{L}q_i V_{n_i,r}(\mu_i).
    \end{eqnarray}
\end{proof}

To apply this Fact we prove that the condition \eqref{x84}
holds in our case:

\begin{fact}\label{x81}
  \begin{equation}
  \label{x80}
  \nu \leq \sum_{\mathbf{ i}\in \widehat{\Gamma}_E(\varepsilon) }
  \psi(\mathbf{i})
  \nu _{\mathbf{i}},
  \end{equation}
  where we write
  \begin{equation}
  \label{x79}
  \nu _{\pmb{\eta}}:=(S_{\pmb{\eta}})_*\nu, \text{ for an } \pmb{\eta}\in \mathcal{A}^*.
  \end{equation}
\end{fact}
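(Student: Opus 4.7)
The plan is to exploit the self-similarity identity \eqref{y92} applied to the finite maximal antichain $\widetilde{\Gamma}_\Sigma(\varepsilon)$ of $\Sigma$ (available by Fact \ref{a93}), and then to re-group the resulting sum by sorting each full word $\pmb{\eta}$ into the unique $\mathcal{I}_{\mathbf{i}}$ that contains it, where $\mathbf{i}$ ranges over $\widehat{\Gamma}_E(\varepsilon)$. The starting point is therefore
\begin{equation*}
\nu \;=\; \sum_{\pmb{\eta} \in \widetilde{\Gamma}_\Sigma(\varepsilon)} p_{\pmb{\eta}}\, \nu_{\pmb{\eta}}.
\end{equation*}

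By \eqref{x99} one has $\widetilde{\Gamma}_\Sigma(\varepsilon) \subset \widehat{\Gamma}_\Sigma(\varepsilon) = \bigcup_{\mathbf{i} \in \widehat{\Gamma}_E(\varepsilon)} \mathcal{I}_{\mathbf{i}}$, so each $\pmb{\eta}$ in the above sum belongs to $\mathcal{I}_{\mathbf{i}(\pmb{\eta})}$ for some $\mathbf{i}(\pmb{\eta}) \in \widehat{\Gamma}_E(\varepsilon)$. The first task is to verify that the sets $\{\mathcal{I}_{\mathbf{i}} : \mathbf{i} \in \widehat{\Gamma}_E(\varepsilon)\}$ are pairwise disjoint so that $\mathbf{i}(\pmb{\eta})$ is well defined. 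Since $\mathcal{I}_{\mathbf{i}} \subset \mathcal{A}^{|\mathbf{i}|}$ by \eqref{u99}, only pairs of equal length $n$ can share elements; for such a pair, a common word $\pmb{\eta}$ forces $S_{\mathbf{i}} = S_{\pmb{\eta}} = S_{\mathbf{j}}$, and since $\mathbf{i},\mathbf{j}$ both lie in $\mathcal{T}_n$ (being elements of $\Sigma_A^*$), the injectivity statement \eqref{y86} forces $\mathbf{i} = \mathbf{j}$.

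With this in hand, the key identity $\nu_{\pmb{\eta}} = \nu_{\mathbf{i}(\pmb{\eta})}$ follows from $S_{\pmb{\eta}} = S_{\mathbf{i}(\pmb{\eta})}$, built into the definition of $\mathcal{I}_{\mathbf{i}(\pmb{\eta})}$ in \eqref{u99}, combined with the definition \eqref{x79} of $\nu_{\pmb{\eta}}$. Regrouping the sum by the value of $\mathbf{i}(\pmb{\eta})$ and then enlarging each inner sum from $\widetilde{\Gamma}_\Sigma(\varepsilon) \cap \mathcal{I}_{\mathbf{i}}$ to all of $\mathcal{I}_{\mathbf{i}}$ will give
\begin{equation*}
\nu \;=\; \sum_{\mathbf{i} \in \widehat{\Gamma}_E(\varepsilon)} \Bigl(\,\sum_{\pmb{\eta} \in \widetilde{\Gamma}_\Sigma(\varepsilon) \cap \mathcal{I}_{\mathbf{i}}} p_{\pmb{\eta}}\Bigr) \nu_{\mathbf{i}} \;\leq\; \sum_{\mathbf{i} \in \widehat{\Gamma}_E(\varepsilon)} \Bigl(\,\sum_{\pmb{\eta} \in \mathcal{I}_{\mathbf{i}}} p_{\pmb{\eta}}\Bigr) \nu_{\mathbf{i}} \;=\; \sum_{\mathbf{i} \in \widehat{\Gamma}_E(\varepsilon)} \psi(\mathbf{i})\, \nu_{\mathbf{i}},
\end{equation*}
which is exactly the asserted inequality. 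The only genuinely delicate step is the disjointness verification above; the rest is a routine application of the self-similar identity \eqref{y92} and the definition of $\psi(\mathbf{i})$ in \eqref{u99}.
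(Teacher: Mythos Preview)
Your proof is correct and follows essentially the same route as the paper: start from the self-similarity identity \eqref{y92} on the finite maximal antichain $\widetilde{\Gamma}_\Sigma(\varepsilon)$, use $\nu_{\pmb{\eta}}=\nu_{\mathbf{i}}$ for $\pmb{\eta}\in\mathcal{I}_{\mathbf{i}}$, and then enlarge to the full sets $\mathcal{I}_{\mathbf{i}}$ for $\mathbf{i}\in\widehat{\Gamma}_E(\varepsilon)$. The only cosmetic difference is the order of the enlarging and regrouping steps, and your explicit disjointness verification of the $\mathcal{I}_{\mathbf{i}}$'s via \eqref{y86} is a nice touch (the paper tacitly uses it, and in any case disjointness is not strictly needed for the inequality).
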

Note that if $\pmb{\eta}\in \mathcal{I}_{\mathbf{i}}$,
then $S_{\pmb{\eta}}=S_{\mathbf{i}}$. Consequently,
\begin{equation}
\label{x78}
\nu _{\pmb{\eta}}=\nu _{\mathbf{i}},
\text{ for all } \pmb{\eta}\in \mathcal{I}_{\mathbf{i}}.
\end{equation}
\begin{proof} Using that $\widetilde{\Gamma}_{\Sigma }
(\varepsilon )\subset
\widehat{\Gamma}_E(\varepsilon)
$ is a  finite maximal antichain
  \begin{eqnarray*}
    \nu    &=& \sum_{\pmb{\eta}\in \widetilde{\Gamma }_{\Sigma }(\varepsilon )}p_{\pmb{\eta}}\nu _{\pmb{\eta}}
    \leq
    \sum_{\pmb{\eta}\in \widehat{\Gamma}_{\Sigma}(\varepsilon)}p_{\pmb{\eta}}\nu _{\pmb{\eta}}
    =\sum_{\mathbf{ j}\in \widehat{\Gamma}_E(\varepsilon)}
    \sum_{\pmb{\eta}\in \mathcal{I}_{\mathbf{j}}}
    p_{\pmb{\eta}}\nu _{\pmb{\eta}}
    \\
     &=& \sum_{\mathbf{ i}\in \widehat{\Gamma}_E(\varepsilon)}
     \left(\sum_{\pmb{\eta}\in \mathcal{I}_{\mathbf{i}}}
     p_{\pmb{\eta}}\right)\nu _{\mathbf{i}}
     =
     \sum_{\mathbf{ i}\in \widehat{\Gamma}_E(\varepsilon)}
     \psi (\mathbf{i})\nu _{\mathbf{i}}.
    \end{eqnarray*}
\end{proof}

\begin{fact}\label{x77}
  Let us fix an $n_{\mathbf{i}}\in\mathbb{N}$ for every $\mathbf{i}\in\widehat{\Gamma}_E(\varepsilon)$ such that $n_{\mathbf{i}}\geq 1$ and $\sum\limits_{\mathbf{ i}\in \widehat{\Gamma}_E(\varepsilon)}n_{\mathbf{i}}\leq n$.
  Then,
  \begin{equation}
  \label{x76}
  V_{n,r}(\nu )\leq
  \sum_{\mathbf{ i}\in \widehat{\Gamma}_E(\varepsilon) }
  \psi (\mathbf{i})
  V_{n_{\mathbf{i}},r}(\nu_{\mathbf{i}}).
  \end{equation}
\end{fact}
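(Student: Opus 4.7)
The plan is to deduce Fact \ref{x77} as an immediate combination of the two preceding results, namely the domination inequality of Fact \ref{x81} and the general quantization error estimate of Fact \ref{a84}. Since both ingredients are already available, the proof will amount to checking that the hypotheses of Fact \ref{a84} are met for the specific choice dictated by Fact \ref{x81}, and then reading off the conclusion.

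Concretely, I would index the family in Fact \ref{a84} by the finite set $\widehat{\Gamma}_E(\varepsilon)$, setting $L:=\#\widehat{\Gamma}_E(\varepsilon)$, $\mu:=\nu$, $\mu_{\mathbf{i}}:=\nu_{\mathbf{i}}=(S_{\mathbf{i}})_*\nu$, and $q_{\mathbf{i}}:=\psi(\mathbf{i})$ for $\mathbf{i}\in\widehat{\Gamma}_E(\varepsilon)$. The hypothesis \eqref{x84} of Fact \ref{a84} then reads exactly as \eqref{x80}, which has been established in Fact \ref{x81}. The natural numbers $n_{\mathbf{i}}$ of Fact \ref{x77} satisfy $n_{\mathbf{i}}\geq 1$ and $\sum_{\mathbf{i}\in \widehat{\Gamma}_E(\varepsilon)}n_{\mathbf{i}}\leq n$ by assumption, so the summability hypothesis on the $n_i$ in Fact \ref{a84} is also met.

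It remains to check the moment condition $\int\|x\|^r d\nu_{\mathbf{i}}(x)<\infty$ in Fact \ref{a84}. This is automatic because each $\nu_{\mathbf{i}}=(S_{\mathbf{i}})_*\nu$ is supported on the compact interval $I_{\mathbf{i}}=S_{\mathbf{i}}(I)\subset I=[0,9/2]$, so $\|x\|$ is bounded on the support. With all hypotheses verified, Fact \ref{a84} applies and yields
\[
V_{n,r}(\nu)\leq \sum_{\mathbf{i}\in \widehat{\Gamma}_E(\varepsilon)}\psi(\mathbf{i})\,V_{n_{\mathbf{i}},r}(\nu_{\mathbf{i}}),
\]
which is exactly \eqref{x76}.

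The main difficulty is not in this final assembly step but rather in the two preparatory results: Fact \ref{x81}, where one has to check that the sum over the maximal antichain $\widetilde{\Gamma}_{\Sigma}(\varepsilon)$ can be reorganized by the fibres $\mathcal{I}_{\mathbf{j}}$ using \eqref{x78} and the definition of $\psi$, and Fact \ref{a84}, where the generic Voronoi-union argument for $\alpha:=\bigcup_i\alpha_i$ with $\alpha_i\in\mathcal{C}_{n_i,r}(\mu_i)$ provides the pointwise bound $\min_{a\in\alpha}\|x-a\|^r\leq \min_{a\in\alpha_i}\|x-a\|^r$. Once these two building blocks are in hand, Fact \ref{x77} itself requires essentially no additional work beyond matching notation and invoking them in sequence.
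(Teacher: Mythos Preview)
Your proposal is correct and follows exactly the paper's approach: the paper's proof of Fact~\ref{x77} is the single sentence ``This follows from the combination of Fact~\ref{a84} and Fact~\ref{x81},'' which is precisely what you have spelled out in detail. Your additional verification of the moment condition and the explicit matching of notation are helpful elaborations, but the underlying argument is identical.
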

\begin{proof}
  This follows from the combination of Fact \ref{a84} and Fact \ref{x81}.
\end{proof}

\begin{lemma}\label{x82}
  \begin{equation}
    \label{y15}
    V_{n,r}(\nu)\leq
    \inf\left\{
      \sum_{ \mathbf{i}\in \widehat{\Gamma}_E(\varepsilon)  }
     3^{-|\mathbf{i}|r} \psi (\mathbf{i})\cdot
      V_{n_{\mathbf{i}},r}(\nu):
      1\leq n_{\mathbf{i}},\ \sum_{ \mathbf{i}\in \widehat{\Gamma}_E(\varepsilon) }n_{\mathbf{i}}\leq n
    \right\}.
  \end{equation}
\end{lemma}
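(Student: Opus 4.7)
The plan is to combine Fact \ref{x77} with the scaling behaviour of the quantization error under the similarity $S_{\mathbf{i}}$. Starting from Fact \ref{x77}, it suffices to show that for every $\mathbf{i}\in\widehat{\Gamma}_E(\varepsilon)$ and every positive integer $n_{\mathbf{i}}$, we have
\begin{equation*}
V_{n_{\mathbf{i}},r}(\nu_{\mathbf{i}})=3^{-|\mathbf{i}|r}\,V_{n_{\mathbf{i}},r}(\nu).
\end{equation*}
Given this identity, substituting into \eqref{x76} yields the bound inside the infimum in \eqref{y15}, and taking the infimum over all admissible choices of $(n_{\mathbf{i}})_{\mathbf{i}\in \widehat{\Gamma}_E(\varepsilon)}$ with $n_{\mathbf{i}}\geq 1$ and $\sum_{\mathbf{i}\in \widehat{\Gamma}_E(\varepsilon)}n_{\mathbf{i}}\leq n$ gives exactly \eqref{y15}.

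To establish the scaling identity I would argue as follows. Recall that $S_{\mathbf{i}}$ is a similarity with contraction ratio $3^{-|\mathbf{i}|}$, and by definition $\nu _{\mathbf{i}}=(S_{\mathbf{i}})_*\nu$. Thus, for any finite set $\alpha\subset\mathbb{R}$ with $\mathrm{card}(\alpha)\leq n_{\mathbf{i}}$, the change of variables $y=S_{\mathbf{i}}(x)$ together with $\|S_{\mathbf{i}}(x)-S_{\mathbf{i}}(a)\|=3^{-|\mathbf{i}|}\|x-a\|$ gives
\begin{equation*}
\int d(y,S_{\mathbf{i}}(\alpha))^r\,d\nu_{\mathbf{i}}(y)
=\int d(S_{\mathbf{i}}(x),S_{\mathbf{i}}(\alpha))^r\,d\nu(x)
=3^{-|\mathbf{i}|r}\int d(x,\alpha)^r\,d\nu(x).
\end{equation*}
Minimising both sides over $\alpha$ of cardinality at most $n_{\mathbf{i}}$ (and noting that the bijection $\alpha\leftrightarrow S_{\mathbf{i}}(\alpha)$ preserves cardinalities) yields $V_{n_{\mathbf{i}},r}(\nu_{\mathbf{i}})\leq 3^{-|\mathbf{i}|r}V_{n_{\mathbf{i}},r}(\nu)$, while the reverse inequality follows by applying the same computation to the inverse similarity $S_{\mathbf{i}}^{-1}$.

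Assembling the pieces: from Fact \ref{x77} we obtain
\begin{equation*}
V_{n,r}(\nu)\leq \sum_{\mathbf{i}\in\widehat{\Gamma}_E(\varepsilon)}\psi(\mathbf{i})\,V_{n_{\mathbf{i}},r}(\nu_{\mathbf{i}})
=\sum_{\mathbf{i}\in\widehat{\Gamma}_E(\varepsilon)}3^{-|\mathbf{i}|r}\psi(\mathbf{i})\,V_{n_{\mathbf{i}},r}(\nu),
\end{equation*}
for every admissible tuple $(n_{\mathbf{i}})$, and the infimum over such tuples produces \eqref{y15}. There is no genuine obstacle here: the only mildly technical point is the scaling identity for $V_{n,r}$ under a similarity, which is a well-known fact used throughout quantization theory (cf.\ \cite{GL1}) and is routine via the change-of-variables computation above.
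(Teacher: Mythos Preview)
Your proposal is correct and follows essentially the same approach as the paper: combine Fact \ref{x77} with the scaling of the quantization error under the similarity $S_{\mathbf{i}}$, then take the infimum over admissible $(n_{\mathbf{i}})$. The only cosmetic difference is that the paper proves just the one-sided inequality $V_{n_{\mathbf{i}},r}(\nu_{\mathbf{i}})\leq 3^{-|\mathbf{i}|r}V_{n_{\mathbf{i}},r}(\nu)$ (which is all that is needed), whereas you establish the full equality; both arguments rest on the same change-of-variables computation.
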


\begin{proof}
Let $\mathbf{i}\in \widehat{\Gamma}_E(\varepsilon) $,
$n_{\mathbf{i}}\geq 1$, and
$\alpha _{\mathbf{i}}\in C_{n_{\mathbf{i}},r}(\nu )$.
Below we prove that
\begin{equation}
\label{x75}
V_{n_{\mathbf{i}},r}(\nu_{\mathbf{i}})
\leq
3^{-|\mathbf{i}|r}
  V_{n_{\mathbf{i}},r}(\nu).
\end{equation}

 We obtain the assertion of the lemma from the combination of Fact \ref{x77} and \eqref{x75}.
Now we prove \eqref{x75}.
\begin{eqnarray}
\nonumber  V_{n_{\mathbf{i}},r}(\nu_{\mathbf{i}}) &\leq&
  \int d(x, S_{\mathbf{i}}(\alpha _{\mathbf{i}}))^r
  d\nu _{\mathbf{i}}(x)
  =
  \int d(x, S_{\mathbf{i}}(\alpha _{\mathbf{i}}))^r
  d(\nu\circ S_{\mathbf{i}}^{-1})(x)
  \\
  \nonumber    &=&
     \int d(S_{\mathbf{i}}(x), S_{\mathbf{i}}(\alpha _{\mathbf{i}}))^r
  d\nu(x)
  =
  3^{-|\mathbf{i}|r}
  \int d(x, \alpha _{\mathbf{i}})^r
  d\nu(x)\\
      &=&
   3^{-|\mathbf{i}|r}
  V_{n_{\mathbf{i}},r}(\nu).
  \end{eqnarray}
\end{proof}

\begin{proposition}\label{x74}
    There exists a constant $C_9>0$ such that
  \begin{equation}
  \label{x73}
  \limsup\limits_{n\to\infty}
  n \cdot e _{n,r}^{\chi_r }(\nu )
  \leq
  C_9m\cdot
  e _{m,r}^{\chi_r }(\nu )<\infty.
  \end{equation}
  \end{proposition}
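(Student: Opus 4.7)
The plan is to apply Lemma \ref{x82} with the uniform allocation $n_{\mathbf{i}} = m$ for every $\mathbf{i}\in \widehat{\Gamma}_E(\varepsilon)$. This is admissible: the cardinality estimate \eqref{a87} gives $\#\widehat{\Gamma}_E(\varepsilon) \leq n/m$, so $\sum n_{\mathbf{i}} = m \cdot \#\widehat{\Gamma}_E(\varepsilon) \leq n$ and the lower bound $n_{\mathbf{i}}\geq 1$ is trivially satisfied. The resulting inequality reads
$$V_{n,r}(\nu) \leq V_{m,r}(\nu) \cdot S,\qquad S := \sum_{\mathbf{i}\in \widehat{\Gamma}_E(\varepsilon)} \psi(\mathbf{i})\,3^{-|\mathbf{i}|r}.$$

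The core of the argument is to bound $S$ by a suitable power of $m/n$. The key input, recorded in \eqref{a81} (via Claim \ref{a83}), is that $(\psi(\mathbf{i})\,3^{-|\mathbf{i}|r})^{t_0} \leq \widehat{\phi}(\mathbf{i}) < \varepsilon$ for every $\mathbf{i}\in \widehat{\Gamma}_E(\varepsilon)$. Writing each summand as
$$\psi(\mathbf{i})\,3^{-|\mathbf{i}|r} = \bigl(\psi(\mathbf{i})\,3^{-|\mathbf{i}|r}\bigr)^{1-t_0}\cdot \bigl(\psi(\mathbf{i})\,3^{-|\mathbf{i}|r}\bigr)^{t_0}$$
and bounding the first factor by $\varepsilon^{(1-t_0)/t_0}$ (uniformly in $\mathbf{i}$) while replacing the second by $\widehat{\phi}(\mathbf{i})$ extracts a uniform factor, after which Lemma \ref{a92} controls $\sum \widehat{\phi}(\mathbf{i})$ by $2C_4$. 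This yields $S \leq 2C_4\,\varepsilon^{(1-t_0)/t_0}$. Substituting $\varepsilon = 6(m/n)C_4^3$ from Definition \ref{y00} gives $S \leq K_0\,(m/n)^{(1-t_0)/t_0}$ with $K_0 := 2C_4\,(6C_4^3)^{(1-t_0)/t_0}$.

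To finish, I use the identity $(1-t_0)/t_0 = r/\chi_r$, which is immediate from \eqref{x71}. Raising $S$ to the power $\chi_r/r$ converts the exponent on $m/n$ into $1$, so
$$n\,e_{n,r}^{\chi_r}(\nu) = n\,V_{n,r}^{\chi_r/r}(\nu) \leq n\,V_{m,r}^{\chi_r/r}(\nu)\cdot S^{\chi_r/r} \leq C_9\,m\,V_{m,r}^{\chi_r/r}(\nu) = C_9\,m\,e_{m,r}^{\chi_r}(\nu),$$
with $C_9 := K_0^{\chi_r/r}$. Taking $\limsup$ as $n\to\infty$ (with $m$ fixed, so that the constraint $m/n < \varepsilon_0^2$ from Definition \ref{y00} holds for all $n$ large enough) produces the first inequality in \eqref{x73}. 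The second inequality ($<\infty$) is automatic because $\nu$ has compact support, whence $V_{m,r}(\nu)<\infty$.

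The main conceptual point, rather than a genuine obstacle, is recognising that the exponent $t_0$ is precisely what converts the pointwise bound from Claim \ref{a83} into the H\"older-type factorisation above, and that by the defining relation \eqref{x71} of $\chi_r$ this exponent is exactly the one needed to recover $\chi_r/r$ on $V_{n,r}$. Everything else is bookkeeping once Lemma \ref{x82}, the cardinality estimate \eqref{a87}, and Lemma \ref{a92} are in hand.
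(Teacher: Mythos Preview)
Your proof is correct and follows essentially the same approach as the paper: choose $n_{\mathbf{i}}=m$, apply Lemma~\ref{x82}, split each summand as $(\psi(\mathbf{i})3^{-|\mathbf{i}|r})^{t_0}\cdot(\psi(\mathbf{i})3^{-|\mathbf{i}|r})^{1-t_0}$, bound the factors via \eqref{a81} and Lemma~\ref{a92}, and use $(1-t_0)/t_0=r/\chi_r$. If anything, your bookkeeping of constants is slightly more careful than the paper's, which sets $C_9=6C_4^3$ and appears to absorb the factor $2C_4$ without comment.
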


  \begin{proof}
    Recall the definition of $n,m,\varepsilon $ from
    Definition \ref{y00}.
    For every $\mathbf{i}\in \widehat{\Gamma}_E(\varepsilon)$
    we define $n_{\mathbf{i}}:=m$.
    Then, by \eqref{a87} we have
    $\sum_{\mathbf{ i}\in \widehat{\Gamma}_E(\varepsilon)}
    n_{\mathbf{i}}=\#\widehat{\Gamma}_E(\varepsilon)m\leq n $.
  We apply Lemma \ref{x82} in the first step below:
    \begin{eqnarray*}
  \nonumber   V_{n,r}(\nu)     &\leq&
    \sum_{\mathbf{ i}\in \widehat{\Gamma}_E(\varepsilon)}
   3^{-|\mathbf{i}|r} \psi (\mathbf{i})V_{m,r}(\nu )
    \\
         &=&\sum_{\mathbf{ i}\in \widehat{\Gamma}_E(\varepsilon)}
  \underbrace{  \left( 3^{-|\mathbf{i}|r}\psi (\mathbf{i}) \right)^{t_r}}_{\leq \widehat{\phi}(\mathbf{i})}
   \underbrace{ \left( 3^{-|\mathbf{i}|r}\psi (\mathbf{i}) \right)^{1-t_r}}_{\leq
   \varepsilon ^{\frac{1-t_r}{t_r}}
   }
    V_{m,r}(\nu )
    \\
    &\leq&
   \underbrace{
    \varepsilon ^{\frac{1-t_r}{t_r}}}_{\varepsilon ^{\frac{r}{\chi _r}}}
   V_{m,r}(\nu )
   \underbrace{\sum_{\mathbf{ i}\in \widehat{\Gamma}_E(\varepsilon)}
   \widehat{\phi} (\mathbf{i})}_{\leq 2C_4}
      \end{eqnarray*}
      The reasoning for the four underbraces above are as follows:
  \begin{enumerate}
  [{\bf (1)}]
    \item This is immediate from the definition $\widehat{\phi} $ and from the fact that $\psi(\mathbf{i})\leq \widehat{\psi}(\mathbf{i})$ for all $\mathbf{i}$.
    \item If $\mathbf{i}\in \widehat{\Gamma}_E(\varepsilon), $
   then $\left( 3^{-|\mathbf{i}r|}\psi (\mathbf{i}) \right)^{t_r}<\varepsilon $
   (see \eqref{a81}).
   \item This follows from definition:  $t_r=\frac{\chi _r}{r+\chi _r}\Longrightarrow \frac{1-t_r}{t_r}=\frac{r}{\chi _r}$.
   \item This follows from Lemma \ref{a92}.
  \end{enumerate}

  So, we have proved that
  \begin{equation}
  \label{x72}
  V_{n,r}(\nu) \leq
  \varepsilon ^{\frac{r}{\chi _r}} V_{m,r}(\nu )
  =
  \left( \frac{m}{n} \right)^{\frac{r}{\chi _r}}
 C_9^{\frac{r}{\chi _r}}
  V_{m,r}(\nu )
  ,
  \end{equation}
  where $C_9:= 6 C _{4}^{3 } $.
  Hence,
  $$
  n V_{n,r}^{\frac{\chi _r}{r}}(\nu)
  \leq
  C_9 m V_{m,r}^{\frac{\chi _r}{r}}(\nu).
  $$
   Letting $n$ approaching to infinity we obtain that \eqref{x73} holds.
  \end{proof}
  Putting together Proposition \eqref{x74}
  and \eqref{y97} we obtain that
  \begin{equation}
  \label{x30}
  \overline{D}_r(\nu )\leq \chi _r.
  \end{equation}

\section{The lower estimate}\label{a99}
First we prove a Fact similar to Fact \ref{x81}.
\begin{fact}\label{x59}
  For every $n$ we have
  \begin{equation}
  \label{x58}
  \nu =\sum_{\mathbf{i}\in \mathcal{T}_n}
  \psi(\mathbf{i}) \nu _{\mathbf{i}}.
  \end{equation}
\end{fact}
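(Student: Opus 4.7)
The plan is to start from the standard self-similarity identity applied to the full antichain $\mathcal{A}^n$, and then reorganise the sum by grouping those words that give rise to the same contraction.

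First I would write the self-similarity of $\nu $ over the maximal finite antichain $\mathcal{A}^n$:
\begin{equation*}
\nu = \sum_{\pmb{\eta}\in\mathcal{A}^n} p_{\pmb{\eta}}\, \nu_{\pmb{\eta}},
\end{equation*}
which is just \eqref{y92} applied to $\Gamma =\mathcal{A}^n$.

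Next I would establish the disjoint partition
\begin{equation*}
\mathcal{A}^n = \bigsqcup_{\mathbf{i}\in\mathcal{T}_n} \mathcal{I}_{\mathbf{i}}.
\end{equation*}
For the covering, given any $\pmb{\eta}\in\mathcal{A}^n$, set $\mathbf{i}:=\Phi (\pmb{\eta})$. By property (a) of Definition \ref{w91}, $|\mathbf{i}|=n$; by construction $\mathbf{i}$ contains no digit pair $(0,3)$, so $\mathbf{i}\in\mathcal{T}_n$. Since $\Pi (\mathbf{i})=\Pi (\pmb{\eta})$ by \eqref{w89} and since both $I_{\mathbf{i}}$ and $I_{\pmb{\eta}}$ have length $|I|\cdot 3^{-n}$ with the same left endpoint, we get $I_{\mathbf{i}}=I_{\pmb{\eta}}$, hence $S_{\mathbf{i}}=S_{\pmb{\eta}}$. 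Thus $\pmb{\eta}\in\mathcal{I}_{\mathbf{i}}$. Disjointness follows from \eqref{y86}: for distinct $\mathbf{i},\mathbf{j}\in\mathcal{T}_n$ one has $S_{\mathbf{i}}\ne S_{\mathbf{j}}$, so $\mathcal{I}_{\mathbf{i}}\cap \mathcal{I}_{\mathbf{j}}=\emptyset $.

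Finally I would combine these two observations. For every $\mathbf{i}\in\mathcal{T}_n$ and every $\pmb{\eta}\in\mathcal{I}_{\mathbf{i}}$ we have $S_{\pmb{\eta}}=S_{\mathbf{i}}$, so $\nu_{\pmb{\eta}}=\nu_{\mathbf{i}}$ (this was also recorded in \eqref{x78}). Grouping the sum over $\mathcal{A}^n$ according to the partition above gives
\begin{equation*}
\nu = \sum_{\mathbf{i}\in\mathcal{T}_n}\sum_{\pmb{\eta}\in\mathcal{I}_{\mathbf{i}}} p_{\pmb{\eta}}\,\nu_{\pmb{\eta}}
= \sum_{\mathbf{i}\in\mathcal{T}_n}\Bigl(\sum_{\pmb{\eta}\in\mathcal{I}_{\mathbf{i}}} p_{\pmb{\eta}}\Bigr)\nu_{\mathbf{i}}
= \sum_{\mathbf{i}\in\mathcal{T}_n}\psi (\mathbf{i})\,\nu_{\mathbf{i}},
\end{equation*}
which is exactly \eqref{x58}. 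There is no real obstacle in this argument — the only point requiring care is verifying the partition, which is entirely bookkeeping via the map $\Phi $ and identity \eqref{y86}.
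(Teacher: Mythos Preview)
Your proof is correct and follows essentially the same approach as the paper: both start from the self-similarity identity over the antichain $\mathcal{A}^n$, use the partition $\mathcal{A}^n=\bigsqcup_{\mathbf{i}\in\mathcal{T}_n}\mathcal{I}_{\mathbf{i}}$ (recorded in the paper as \eqref{y77}), and regroup using $\nu_{\pmb{\eta}}=\nu_{\mathbf{i}}$ for $\pmb{\eta}\in\mathcal{I}_{\mathbf{i}}$. The only cosmetic difference is that the paper phrases the computation via integration against a test function, while you work directly at the level of measures.
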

\begin{proof}
  Fix an $n$.
  Then, $\mathcal{A}^n$ is a finite maximal antichain for $\Sigma $. Hence,
  \begin{equation}
  \label{x57}
  \nu =\sum_{\pmb{\eta}\in \mathcal{A}^n}
  p_{\pmb{\eta}}\cdot \nu _{\pmb{\eta}},
  \end{equation}
  where
  $
  \nu _{\pmb{\eta}}:=
  \nu \circ S _{\pmb{\eta}}^{-1}
  $.
  Moreover, let $f:\Sigma \to\mathbb{R}$
  be a continuous function.
  Using that $S_{\pmb{\eta}}=S_{\mathbf{i}}$ if $\pmb{\eta}\in \mathcal{I}_{\mathbf{i}}$ we get
  \begin{eqnarray}\label{x53}
   \nonumber  \int f(x)d\nu (x)    &=& \sum_{\pmb{\eta}\in \mathcal{A}^n}
  p_{\pmb{\eta}}
  \int f(x)d\nu_{\pmb{\eta}} (x)=
  \sum_{\mathbf{ i}\in \mathcal{T}_n}
  \sum_{\pmb{\eta}\in \mathcal{I}_{\mathbf{i}}}
  p_{\pmb{\eta}}
  \int f(x)d(\nu \circ S _{\pmb{\eta}}^{-1 })(x) \\
       &=& \sum_{\mathbf{ i}\in \mathcal{T}_n}
      \underbrace{ \sum_{\pmb{\eta}\in \mathcal{I}_{\mathbf{i}}}
       p_{\pmb{\eta}}}_{\psi(\mathbf{i})}
       \int f(x)d(\nu \circ S _{\mathbf{i}}^{-1 })(x)
 =
 \sum_{\mathbf{ i}\in \mathcal{T}_n}
 \psi(\mathbf{i})
 \int f(x)d\nu_{\mathbf{i}}(x).
    \end{eqnarray}
\end{proof}

Let $U:=\text{int}(I)=
\left( 0,\frac{9}{2} \right)$. Recall that $I=\overline{U}\supset \Lambda $. For an $\pmb{\eta}\in \Sigma^*$ we write $U_{\pmb{\eta}}:=S_{\pmb{\eta}}(U)\subset U$. Following Graf and Luschgy
  \cite{GL3} we introduce
\begin{equation}
\label{x69}
u_{n,r}(\nu ):=
\inf \left\{\int d(x, \alpha\cup U^c)^r d \nu(x): \alpha \subset \mathbb{R}^{d}, \operatorname{card}(\alpha) \leq n\right\}.
\end{equation}
As an analogue of \cite[Lemma 4.4]{GL3} we need the following assertion:
\begin{lemma}\label{x65}
  There exists a set $\alpha _n\subset \mathbb{R}^n$
  such that $\# \alpha _n\leq n$ and
  \begin{equation}
  \label{x64}
  u_{n,r}(\nu)=\int d\left( x,\alpha _n\cup U^c \right)^rd\nu (x).
  \end{equation}
\end{lemma}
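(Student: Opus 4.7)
The plan is a standard compactness argument modeled on \cite[Lemma 4.4]{GL3}, with a preliminary reduction that exploits the presence of $U^c$ inside the distance to restrict attention to configurations $\alpha\subset I=[0,9/2]$, which is already compact.

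First I would verify that the infimum in \eqref{x69} is unchanged if we restrict to $\alpha\subset I$. Indeed, for any $a\notin I$ and any $x\in I$, one has $d(x,a)\geq d(x,U^c)$: if $a<0$, then $d(x,a)=x-a\geq x\geq d(x,U^c)$, and the case $a>9/2$ is symmetric. Since $\supp(\nu)\subset \Lambda\subset I$, such an $a$ never realises the minimum in $d(x,\alpha\cup U^c)$ for $\nu$-a.e.\ $x$, so replacing $\alpha$ by $\alpha\cap I$ cannot increase the integral.

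Having made this reduction, I would take a minimising sequence $(\alpha^{(k)})_{k\in\mathbb{N}}$ with $\alpha^{(k)}\subset I$ and $\#\alpha^{(k)}\leq n$. Enumerating (with repetitions if $\#\alpha^{(k)}<n$) $\alpha^{(k)}=\{a_1^{(k)},\dots,a_n^{(k)}\}$, each coordinate lies in the compact interval $I$, so after a diagonal extraction we may assume $a_j^{(k)}\to a_j^{*}\in I$ for every $j=1,\dots,n$. Set $\alpha_n:=\{a_1^{*},\dots,a_n^{*}\}$, so $\#\alpha_n\leq n$. The elementary inequality $|d(x,\alpha^{(k)}\cup U^c)-d(x,\alpha_n\cup U^c)|\leq \max_j|a_j^{(k)}-a_j^{*}|$ yields uniform, hence pointwise, convergence of the integrands, and these are dominated by the constant $(\mathrm{diam}\,I)^r$, which is $\nu$-integrable. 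By dominated convergence,
$$
\int d(x,\alpha_n\cup U^c)^r\,d\nu(x)=\lim_{k\to\infty}\int d(x,\alpha^{(k)}\cup U^c)^r\,d\nu(x)=u_{n,r}(\nu),
$$
which proves \eqref{x64}.

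The main (but very mild) obstacle is the initial reduction to $\alpha\subset I$: in the absence of the $U^c$ term one must argue that points of $\alpha^{(k)}$ escaping to infinity contribute nothing asymptotically, which is precisely where \cite[Lemma 4.4]{GL3} requires care. Here the direct domination $d(\cdot,a)\geq d(\cdot,U^c)$ valid for all $a\notin I$ bypasses that difficulty entirely, leaving only a routine coordinate-wise compactification on the compact cube $I^n$.
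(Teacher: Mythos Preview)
Your proof is correct and follows essentially the same approach as the paper. The paper's version is more compressed: it simply defines $f:I^n\to\mathbb{R}$ by $f(x_1,\dots,x_n):=\int d(x,\{x_1,\dots,x_n,0,9/2\})^r\,d\nu(x)$, observes that $f$ is continuous on the compact set $I^n$, and takes $\alpha_n$ to be any minimiser; your minimising-sequence argument with dominated convergence is the same compactness idea unpacked, and your explicit reduction to $\alpha\subset I$ is the step the paper leaves implicit (it is in fact immediate here, since $a\notin I$ implies $a\in U^c$, so such points contribute nothing to $\alpha\cup U^c$).
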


\begin{proof}
  Define $f:I^n\to \mathbb{R}$ by
$
f(x_1,\dots  ,x_n):=\int
d(
x,{x_1,\dots  ,x_n,0,9/2}
)^rd\nu (x).
$
It is easy to see that this function is continuous. Hence, it attains its infimum on
$I^n$. Any place where the infimum is attained can be chosen as $\alpha _n$.
\end{proof}

\begin{lemma}\label{x67}
 Fix an $m$. Then, there exists an $n_0$ such that for all $n\geq n_0$
  there exists a sequence
 $\left\{ n_{\mathbf{i}} \right\}_{\mathbf{i}\in\mathcal{T}_m }$ such that
 \begin{equation}
 \label{x66}
 1\leq n_{\mathbf{i}}< n,\quad
 \sum_{\mathbf{i}\in \mathcal{T}_m}n_{\mathbf{i}}
 \leq 2n,\quad
 u_{n,r}(\nu)\geq
 \sum_{\mathbf{i}\in \mathcal{T}_m}
 \psi(\mathbf{i}) 3^{-|\mathbf{i}|r}
 u_{n_{\mathbf{i}},r}(\nu).
 \end{equation}
\end{lemma}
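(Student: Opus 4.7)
The plan is to adapt the Graf--Luschgy strategy (cf.\ \cite{GL3}) to the present overlapping setting. By Lemma~\ref{x65} I fix an optimal $\alpha_n\subset\mathbb{R}$ with $\#\alpha_n\leq n$ attaining $u_{n,r}(\nu)$, and by Fact~\ref{x59} I decompose
\begin{equation*}
u_{n,r}(\nu)=\sum_{\mathbf{i}\in\mathcal{T}_m}\psi(\mathbf{i})\int d(x,\alpha_n\cup U^c)^r\,d\nu_{\mathbf{i}}(x).
\end{equation*}
For each $\mathbf{i}\in\mathcal{T}_m$ set $\beta_{\mathbf{i}}:=\alpha_n\cap U_{\mathbf{i}}$. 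Corollary~\ref{y89} applied at level $m$ implies that every point of $\mathbb{R}$ belongs to at most two of the open cylinders $\{U_{\mathbf{i}}\}_{\mathbf{i}\in\mathcal{T}_m}$ (three pairwise-intersecting cylinders would violate the ``unique partner'' statement), so $\sum_{\mathbf{i}\in\mathcal{T}_m}\#\beta_{\mathbf{i}}\leq 2\#\alpha_n\leq 2n$.

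Next I derive the key inequality. Since $\alpha_n\setminus\beta_{\mathbf{i}}\subset U_{\mathbf{i}}^c$ and $U_{\mathbf{i}}\subset U$ forces $U^c\subset U_{\mathbf{i}}^c$, we have $\alpha_n\cup U^c\subset\beta_{\mathbf{i}}\cup U_{\mathbf{i}}^c$, hence $d(x,\alpha_n\cup U^c)\geq d(x,\beta_{\mathbf{i}}\cup U_{\mathbf{i}}^c)$. Changing variables via $y=S_{\mathbf{i}}^{-1}(x)$, using that $S_{\mathbf{i}}$ is a similarity of ratio $3^{-|\mathbf{i}|}$ mapping $U$ bijectively onto $U_{\mathbf{i}}$ (hence $U^c$ onto $U_{\mathbf{i}}^c$),
\begin{equation*}
\int d(x,\beta_{\mathbf{i}}\cup U_{\mathbf{i}}^c)^r\,d\nu_{\mathbf{i}}(x)=3^{-|\mathbf{i}|r}\int d(y,S_{\mathbf{i}}^{-1}(\beta_{\mathbf{i}})\cup U^c)^r\,d\nu(y)\geq 3^{-|\mathbf{i}|r}\,u_{\#\beta_{\mathbf{i}},r}(\nu),
\end{equation*}
since $\#S_{\mathbf{i}}^{-1}(\beta_{\mathbf{i}})=\#\beta_{\mathbf{i}}$. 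Summing over $\mathbf{i}\in\mathcal{T}_m$ yields the base inequality
\begin{equation*}
u_{n,r}(\nu)\geq\sum_{\mathbf{i}\in\mathcal{T}_m}\psi(\mathbf{i})\,3^{-|\mathbf{i}|r}\,u_{\#\beta_{\mathbf{i}},r}(\nu).
\end{equation*}

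The last step promotes the raw cardinalities $\#\beta_{\mathbf{i}}$ to integers $n_{\mathbf{i}}$ satisfying $1\leq n_{\mathbf{i}}<n$ and $\sum n_{\mathbf{i}}\leq 2n$, exploiting that $n\mapsto u_{n,r}(\nu)$ is non-increasing. Cells with $\#\beta_{\mathbf{i}}=0$ are padded to $n_{\mathbf{i}}:=1$, and the resulting inequality is preserved because $u_{1,r}(\nu)\leq u_{0,r}(\nu)$. The main obstacle is the extremal case $\#\beta_{\mathbf{i}}=n$ (all optimal points concentrated in a single cylinder), for which the requirement $n_{\mathbf{i}}<n$ would reverse monotonicity. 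I plan to rule this case out by choosing $n_0=n_0(m)$ large: for each $\mathbf{i}\in\mathcal{T}_m$ there exists $\mathbf{j}\in\mathcal{T}_m$ with $U_{\mathbf{j}}\cap U_{\mathbf{i}}=\emptyset$ and $\psi(\mathbf{j})>0$, so $\nu(\mathbb{R}\setminus\overline{U_{\mathbf{i}}})>0$; a standard uniform lower bound on the $r$-th moment cost over a positive-$\nu$-mass region disjoint from $U_{\mathbf{i}}$ forces any optimal $\alpha_n$ to place at least one point outside $U_{\mathbf{i}}$ once $n$ is large enough. Taking $n_0$ further large enough to absorb the at-most-$\#\mathcal{T}_m$ padding from empty cells into the slack in $\sum\#\beta_{\mathbf{i}}\leq 2n$ delivers the required bound $\sum n_{\mathbf{i}}\leq 2n$.
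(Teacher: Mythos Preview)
Your decomposition via Fact~\ref{x59} and the key inequality obtained from $\alpha_n\cup U^c\subset\beta_{\mathbf{i}}\cup U_{\mathbf{i}}^c$ are exactly the paper's argument, and your use of Corollary~\ref{y89} to get $\sum_{\mathbf{i}}\#\beta_{\mathbf{i}}\leq 2n$ is correct. The gap is in the final ``absorption'' step.

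There is no guaranteed slack in the bound $\sum_{\mathbf{i}}\#\beta_{\mathbf{i}}\leq 2n$. If every point of $\alpha_n$ happens to lie in the intersection of two cylinders $U_{\mathbf{i}}\cap U_{\mathbf{j}}$, then $\sum_{\mathbf{i}}\#\beta_{\mathbf{i}}=2n$ exactly, and if in addition some other cells are empty, padding each of them to $1$ pushes the sum strictly above $2n$. Taking $n_0$ large does not create slack: the quantity $2n-\sum_{\mathbf{i}}\#\beta_{\mathbf{i}}=2a_0+a_1$ (where $a_k$ is the number of points of $\alpha_n$ lying in exactly $k$ cylinders) bears no a~priori relation to the number of empty cells.

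The paper closes this gap not by padding but by proving directly that $\#\beta_{\mathbf{i}}\geq 1$ for every $\mathbf{i}\in\mathcal{T}_m$ once $n\geq n_0$. The argument is: pick $\pmb{\tau}$ with $S_{\pmb{\tau}}(I)\subset U$ (e.g.\ $\pmb{\tau}=1$), set $\varepsilon:=\mathrm{dist}(S_{\pmb{\tau}}(I),U^c)>0$ and $\delta:=3^{-m}$, so that $d(x,U_{\mathbf{i}}^c)\geq\delta\varepsilon$ for every $x\in I_{\mathbf{i}\pmb{\tau}}$. Since $\delta_n:=\max_{x\in\Lambda}d(x,\alpha_n\cup U^c)\to 0$, choose $n_0$ with $\delta_n<\delta\varepsilon$ for $n\geq n_0$. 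Then for any $x\in\Lambda_{\mathbf{i}\pmb{\tau}}$ the nearest point of $\alpha_n\cup U^c$ is at distance $<\delta\varepsilon$, hence cannot lie in $U_{\mathbf{i}}^c\supset U^c$, so it lies in $\alpha_n\cap U_{\mathbf{i}}=\beta_{\mathbf{i}}$. This gives $n_{\mathbf{i}}:=\#\beta_{\mathbf{i}}\geq 1$ with no padding, and since this holds for \emph{every} $\mathbf{i}$ (in particular for some $\mathbf{j}$ with $U_{\mathbf{j}}\cap U_{\mathbf{i}}=\emptyset$), it also immediately yields $n_{\mathbf{i}}<n$. Your sketch for ruling out $\#\beta_{\mathbf{i}}=n$ is in fact the germ of this same argument; carrying it out uniformly over all $\mathbf{i}$ removes the need for padding altogether.
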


\begin{proof}
   Fix an $m$ and to shorten the notation we write
  $\Gamma :=\mathcal{T}_m$.
  We consider any $\pmb{\tau}\in\Sigma ^*$ such that
$S_{\pmb{\tau}}(I)\subset U$. For example $\pmb{\tau}=(1)$ will do since $I_1=S_1(I)=\left[1,\frac{5}{2}\right]\subset \left( 0,\frac{9}{2} \right)$. Let
$\varepsilon :=\text{dist}(S_{\tau }(I),U^c)$. If we choose $\tau =1$, then $\varepsilon =1$. Let $\delta :=3^{-m}$. Then,
\begin{equation}
\label{x63}
d(x,U^c)\geq d(x, S_{\mathbf{i}}(U^c))\geq \delta \varepsilon ,\quad
\text{ if }
x\in I_{\mathbf{i}\pmb{\tau}} \text{ and }
\mathbf{i}\in \mathcal{T}_m.
\end{equation}
For each $n$ let $\alpha _n$ be the optimal set for
$u_{n,r}(\nu)$. This exists according to Lemma \ref{x65}. We define
\begin{equation}
\label{x62}
\delta _n:=
\max\left\{
  d(x,\alpha _n\cup U^c):x\in \Lambda
 \right\}.
\end{equation}
Since $\delta _n\to 0$ we can choose an $n_0$ such that
$\delta _n<\delta  \varepsilon $ if $n\geq n_0$.
Let $x\in \Lambda _{\mathbf{i}\pmb{\tau}}$.
Then, $x\in S_{\mathbf{i}}(U)\subset U$, for all
$\mathbf{i}\in \Gamma $. By compactness,
there exists an $a\in\alpha _n\cup U^c$ such that
$$
d(x,\alpha _n\cup U^c)=d(x,a)\leq \delta _n<\delta \varepsilon ,
$$
where the one but last inequality holds since
$x\in\Lambda $. So, by \eqref{x63} we have
$a\not\in S_{\mathbf{i}}(U^{c})$. That is, $a\in S_{\mathbf{i}}(U)\subset U$ but $a\in \alpha _n\cup U^c$,
hence $a\in \alpha _n$. Let
$$
n_{\mathbf{i}}:=\#
\alpha _n\cap S_{\mathbf{i}}(U),\quad
\alpha _{n_{\mathbf{i}}}:= \alpha _n\cap S_{\mathbf{i}}(U)
.$$
Then,  we have just proved that
$a\in \alpha _{\mathbf{n}_\mathbf{i}}$. That is,
\begin{equation}
\label{x61}
n_{\mathbf{i}}\geq 1, \quad \forall \mathbf{i}\in \Gamma .
\end{equation}
It follows from the optimal property of $\alpha _n$ that $\alpha _n\not\subset \alpha_{n_{\mathbf{i}}}$.
Hence, $n_{\mathbf{i}}<n$ for all $\mathbf{i}\in\Gamma $.

It follows from Property-\ref{w96} that
\begin{equation}
\label{x60}
\#\left\{ \mathbf{i}\in \Gamma :
x\in S_{n_{\mathbf{i}}}(I)
\right\}\leq 2,\quad \forall   x\in I.
\end{equation}
Thus,
\begin{equation}
\label{x56}
\sum_{\mathbf{ i}\in \Gamma }n_{\mathbf{i}}\leq
2n.
\end{equation}
Finally we prove that
\begin{equation}
\label{x50}
u_{n,r}(\nu)\geq
\sum_{\mathbf{ i}\in\Gamma }\psi(\mathbf{i})
  3^{-mr} u_{n_{\mathbf{i}},r}(\nu).
\end{equation}
To verify this
  we will use the following trivial observation
\begin{equation}
\label{x55}
\alpha _n\cup S_{\mathbf{i}}(U^c)=
\alpha _n\cup (S_{\mathbf{i}}(U))^c
=
\left( \alpha _n\cap S_{\mathbf{i}}(U) \right)
\cup
\left(S_{\mathbf{i}}(U)  \right)^c.
\end{equation}
Using this we get
  \begin{multline}
    \nonumber  \int\! d\left(S_{\mathbf{i}}(x),
    \alpha _n\cup S_{\mathbf{i}}(U^c)
    \right)^rd\nu (x) \!\!=
    \!\!
    \int\! d\left(S_{\mathbf{i}}(x),
    (\alpha _n\cap S_{\mathbf{i}}(U))\cup S_{\mathbf{i}}(U)^c
    \right)^rd\nu (x)\\
  =
  3^{-mr} \int d\left(x,
S _{\mathbf{ i}}^{-1}\bigg[
  \underbrace{
  \big( \alpha _n\cap S_{\mathbf{i}}(U)\big)
  }_{\alpha _{n_{\mathbf{i}}}}
 \cup S_{\mathbf{i}}(U^c)\bigg]
    \right)^rd\nu (x)\\
   = 3^{-mr} \int d\left( x,
    S _{\mathbf{ i}}^{-1}(\alpha _{n_{\mathbf{i}}})
    \cup U^c
    \right)^r d\nu (x)\geq
    3^{-mr} u_{n_{\mathbf{i}},r}(\nu).
  \end{multline}
Now we put all of these together:
\begin{eqnarray}\label{x52}
\nonumber u_{n,r}(\nu)    &=&
 \int d(x,\alpha _n\cup U^c)^rd\nu (x)
 =
 \sum_{\mathbf{ i}\in\Gamma }\psi(\mathbf{i})
 \int d\left(
S_{\mathbf{i}}(x),\alpha _n\cup U^c
  \right)^rd\nu (x)
 \\
     &\geq&
     \sum_{\mathbf{ i}\in\Gamma }\psi(\mathbf{i})
     \int d\left(
    S_{\mathbf{i}}(x),\alpha _n\cup
    S_{\mathbf{i}}( U^c)
      \right)^rd\nu (x)
  \geq
  \sum_{\mathbf{ i}\in\Gamma }\psi(\mathbf{i})
  3^{-mr} u_{n_{\mathbf{i}},r}(\nu),
  \end{eqnarray}
where at the second step we used \eqref{x53},
at the third step we used that $S_{\mathbf{i}}(U^c)\supset U^c$. At the fourth step we used
the previous inequality.
\end{proof}
\begin{proposition}\label{x49}
  Let $0<\ell <\chi_r$. Then,
  \begin{equation}
  \label{x48}
  \liminf\limits_{n\to\infty}
  n\cdot \left( u_{n,r}(\nu) \right)^{\frac{\ell }{r}}
  >0.
  \end{equation}
\end{proposition}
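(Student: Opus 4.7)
The plan is to convert the recursive inequality of Lemma \ref{x67} into a strong induction on $n$, using H\"older's inequality at a pressure exponent chosen just below $t_0$. Set $\kappa := \ell/r$ and $t^* := \ell/(r+\ell)$; by construction $t^*/(1-t^*) = \kappa$. The hypothesis $\ell < \chi_r = r t_0/(1-t_0)$ is equivalent to $t^* < t_0$, so Fact \ref{W87} gives $P(t^*) > 0$, and hence by \eqref{u13}
\begin{equation*}
S_m := \sum_{\mathbf{i}\in\mathcal{T}_m}\bigl(\psi(\mathbf{i})\,3^{-mr}\bigr)^{t^*}\;\longrightarrow\;+\infty \qquad(m\to\infty).
\end{equation*}
Fix $m$ large enough that $S_m \geq 2^{t^*/\kappa}$, and let $n_0$ be the corresponding threshold from Lemma \ref{x67}.

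For any $n\geq n_0$, Lemma \ref{x67} supplies integers $\{n_\mathbf{i}\}_{\mathbf{i}\in\mathcal{T}_m}$ with $1\leq n_\mathbf{i}<n$ and $\sum_\mathbf{i} n_\mathbf{i}\leq 2n$ such that $u_{n,r}(\nu) \geq \sum_\mathbf{i} a_\mathbf{i}\,u_{n_\mathbf{i},r}(\nu)$, where $a_\mathbf{i}:=\psi(\mathbf{i})\,3^{-mr}$. Writing $a_\mathbf{i}^{t^*} = (a_\mathbf{i} n_\mathbf{i}^{-1/\kappa})^{t^*} n_\mathbf{i}^{t^*/\kappa}$ and applying H\"older's inequality with conjugate exponents $1/t^*$ and $1/(1-t^*)$ gives
\begin{equation*}
S_m \;\leq\; \Bigl(\sum_\mathbf{i} a_\mathbf{i}\, n_\mathbf{i}^{-1/\kappa}\Bigr)^{t^*} \Bigl(\sum_\mathbf{i} n_\mathbf{i}^{t^*/(\kappa(1-t^*))}\Bigr)^{1-t^*}.
\end{equation*}
The key algebraic identity $t^*/\bigl(\kappa(1-t^*)\bigr) = 1$ collapses the second factor to $(\sum n_\mathbf{i})^{1-t^*}\leq (2n)^{1-t^*}$, yielding
\begin{equation*}
\sum_\mathbf{i} a_\mathbf{i}\, n_\mathbf{i}^{-1/\kappa} \;\geq\; \frac{S_m^{\,1/t^*}}{(2n)^{1/\kappa}} \;\geq\; \frac{2^{1/\kappa}}{(2n)^{1/\kappa}} \;=\; n^{-1/\kappa}.
\end{equation*}

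I then strongly induct on $n$ to prove $u_{n,r}(\nu)\geq c\, n^{-1/\kappa}$ for all $n\geq 1$, where $c := \min_{1\leq k\leq n_0} k^{1/\kappa}\,u_{k,r}(\nu)$. Positivity of $c$ uses that $\Lambda$ is uncountable while $U^c\cap\Lambda\subseteq\{0,9/2\}$, so $u_{k,r}(\nu)>0$ for each fixed $k$. The base cases $k\leq n_0$ hold by the definition of $c$; for $n>n_0$, combining the induction hypothesis with the bound just established yields $u_{n,r}(\nu) \geq c\sum_\mathbf{i} a_\mathbf{i}\,n_\mathbf{i}^{-1/\kappa} \geq c\,n^{-1/\kappa}$. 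Therefore $n\, u_{n,r}(\nu)^\kappa \geq c^\kappa > 0$ for every $n\geq 1$, which is \eqref{x48}.

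The main delicate point is the exponent coupling: the choice $t^* = \ell/(r+\ell)$ is uniquely forced by the requirement $t^*/(\kappa(1-t^*)) = 1$, so that the H\"older remainder is exactly $\sum n_\mathbf{i}$, the only quantity we can bound through Lemma \ref{x67}. This rigidity is precisely what makes $\chi_r$ the critical dimension, and explains why the argument collapses as $\ell\uparrow\chi_r$ (equivalently, $t^*\uparrow t_0$, at which point $S_m$ no longer grows). A secondary technicality is establishing $u_{k,r}(\nu)>0$ for the finitely many small $k$ needed to seed the induction; this rests on the uncountability of $\Lambda$.
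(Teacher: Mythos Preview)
Your proof is correct and follows essentially the same route as the paper: fix $m$ so that the pressure sum at exponent $t^*=\ell/(r+\ell)$ exceeds the relevant threshold, then run strong induction on $n$ using Lemma~\ref{x67} together with a H\"older-type inequality. The paper phrases the key inequality as a ``reverse H\"older'' applied directly to $\sum_{\mathbf{i}} a_{\mathbf{i}}(n_{\mathbf{i}}/n)^{-r/\ell}$, whereas you obtain the same bound by applying ordinary H\"older to $S_m$ and rearranging; these are the same computation up to normalization, and your slightly weaker threshold $S_m\geq 2^{t^*/\kappa}$ (versus $S_m>2$) is harmless.
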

Our proof follows the line
  \cite[Lemma 4.4]{GL3} and
\cite[Proposition 3.12]{R3}.

\begin{proof}
  Fix an $\ell \in\left( 0,\chi_r \right)$. Then,
  $\frac{\ell }{r+\ell }<t_r$. Recall the definition of the pressure function $P(t)$
  from \eqref{u13}. It follows from \eqref{y35}
  that
\begin{equation}
\label{x47}
P\left(\frac{\ell }{r+\ell }  \right)
=
\lim\limits_{m\to\infty}\frac{1}{m}
\log \sum_{\mathbf{ i}\in\mathcal{T}_m}
\left( \psi(\mathbf{i})3^{-mr} \right)^{\frac{\ell }{r+\ell }}>0.
\end{equation}
Fix an $m$ such that
\begin{equation}
\label{y40}
\sum_{\mathbf{ i}\in\mathcal{T}_m}
\left(
\psi(\mathbf{i}) 3^{-mr}
 \right)^{\frac{\ell }{r+\ell }}>2.
\end{equation}
For this $m$ we choose $n_0$ as in Lemma \ref{x67}.
Let
\begin{equation}
\label{x39}
C:=\min\left\{
  q^{\frac{r}{\ell }}u_{q,r}(\nu):q\leq n_0
 \right\}.
\end{equation}
Clearly, $u_{n,r}(\nu)>0$. Hence, $C>0$.
Choose an  $n$ such that
\begin{equation}
\label{x38}
n\geq n_0\quad \&\ \quad
 k< n\Longrightarrow
k^{\frac{r}{\ell }}u_{k,r}(\nu)\geq C.
\end{equation}
Below we prove that
\begin{equation}
\label{x37}
n^{\frac{r}{\ell }}u_{n,r}(\nu)\geq C.
\end{equation}
For this $m$ and $n$ we choose $n_{\mathbf{i}}$ for every $\mathbf{i}\in \mathcal{T}_m$, as in Lemma \ref{x67},
such that the inequalities of \eqref{x66} hold.
\begin{eqnarray}\label{x36}
  \nonumber n^{\frac{r}{\ell }}u_{n,r}(\nu)   &\geq&
  n^{\frac{r}{\ell }}
  \sum_{\mathbf{ i}\in \Gamma }
  \psi(\mathbf{i})3^{-mr} u_{n_{\mathbf{i}},r}(\nu)
  \\
     &=&  n^{\frac{r}{\ell }}
  \sum_{\mathbf{ i}\in \Gamma }
  \psi(\mathbf{i})3^{-mr} u_{n_{\mathbf{i}},r}(\nu)
  (n_{\mathbf{i}})^{-\frac{r}{\ell }}
 \underbrace{ (n_{\mathbf{i}})^{\frac{r}{\ell }}
  u_{n_{\mathbf{i}},r}(\nu)}_{\geq C}
  \\
  &\geq& C
 \underbrace{ \sum_{\mathbf{ i}\in \Gamma }
  \psi(\mathbf{i})3^{-mr}\left(
\frac{n_{\mathbf{i}}}{n}
   \right)^{-\frac{r}{\ell }}}_{a_n},
  \end{eqnarray}
  where at first step we used \eqref{x52}, and at the last step we used that $n_{\mathbf{i}}<n$ so by
  \eqref{x38} we have
  $u_{n_{\mathbf{i}},r}(\nu)\geq C$.
Set $a_n:=\sum\limits_{\mathbf{ i}\in \Gamma }
\psi(\mathbf{i})3^{-mr}\left(
\frac{n_{\mathbf{i}}}{n}
 \right)^{-\frac{r}{\ell }}$. To verify \eqref{x37},
 it is enough to prove that
\begin{equation}
\label{x35}
a_n\geq 1.
\end{equation}
To see this we use the so-called  reversed H\"older inequality: Let $\left\{ x_k \right\}_{k=1}^{M }$ and
$\left\{ y_k \right\}_{k=1}^{M}$ be finite sequences of positive numbers and let $p\in (1,\infty  )$. Then,
\begin{equation}
\label{x34}
\sum_{k=1}^{M} x_k y_k\geq
\left(  \sum_{k=1}^{M}
x_{k}^{\frac{1}{p} }
\right)^p
\left( \sum_{k=1}^{M}
y_{k}^{-\frac{1}{p-1} }
\right)^{-(p-1)}.
\end{equation}
In our case the summation is taken for $\mathbf{i}\in \Gamma $, $x_{\mathbf{i}}=\psi(\mathbf{i})3^{-mr}$
 and $y_{\mathbf{i}}=\left(
  \frac{n_{\mathbf{i}}}{n}
   \right)^{-\frac{r}{\ell }}$. Finally, $p:=1+\frac{r}{\ell }$. That is, $\frac{1}{p}=\frac{\ell }{r+\ell }$, $\frac{1}{p-1}=\frac{\ell }{r}$ and $-(p-1)=-\frac{r}{\ell }$. Hence, from the reversed H\"older inequality we get
   \begin{equation}
   \label{x33}
 a_n\geq
 \left(\sum_{\mathbf{ i}\in \Gamma }
 \left( \psi(\mathbf{i})3^{-mr} \right)^{\frac{\ell }{r+\ell }}
 \right) ^{1+\frac{r}{\ell }}
 \left(\sum_{\mathbf{ i}\in \Gamma }
 \left(\frac{n_{\mathbf{i}}}{n}  \right)^{\left(-\frac{r}{\ell }  \right)\left(-\frac{\ell }{r}  \right)}
 \right)^{-\frac{r}{\ell }} .
   \end{equation}
Putting together this, \eqref{x56}  and \eqref{y40}
we get that
$$
a_n\geq 2^{1+\frac{r}{\ell }}\cdot
2^{-\frac{r}{\ell }}  =2.
$$
In this way we have proved that \eqref{x35} holds which implies that $n^{\frac{r}{\ell }}u_{n,r}(\nu) >C$
for all $n$. Thus, \eqref{x48} holds.
\end{proof}
Clearly, $u_{n,r}(\nu)\leq V_{n,r}(\nu)$. Hence, we get from Proposition \ref{x49} that
\begin{equation}
\label{x32}
\liminf\limits_{n\to\infty} n\cdot e _{n,r}^{\ell  }
>0.
\end{equation}
Combining this with \eqref{w86} we obtain that
\begin{equation}
\label{x31}
\chi _r\leq\underline{D}_r(\nu ).
\end{equation}
\begin{proof}[Proof of Theorem \ref{x70}]
Putting together \eqref{x30} and \eqref{x31}
we obtain that
\begin{equation}
\label{x29}
D_r(\nu )=\chi _r.
\end{equation}
\end{proof}

\section{Checking Properties 1-4}\label{aa99}


\subsection{Good blocks and bad blocks}
Let $n\in \mathbb{N} \cup \{\infty \} $.
For  arbitrary $1\leq k<\ell\leq n$ and we use the shorthand notation
$$
[n]:=\left\{ 1,\dots  ,n \right\},\quad
[k,\ell ]:=\left\{ k,\dots  ,\ell  \right\}.
$$

\begin{definition}
  \textbf{Bad blocks, Good blocks} \\
  Let $n\in \mathbb{N} \cup \{\infty\}  $ and $\pmb{\eta}=(\eta _1,\dots  ,\eta _n)$.
  For a $k<\ell \leq n$
we say that $[k,\ell ]$ is a \texttt{bad block}  of $[n]$
with respect to an $\pmb{\eta }=(\eta _1,\dots  ,\eta _n)\in \mathcal{A}^n$
if
$(\eta _k,\dots  ,\eta _{\ell })=(0,3,3,\dots  ,3)$. We say that $[k,\ell ]$ is a \texttt{maximal bad block} if either $\ell =\infty  $ or $\eta _{\ell +1}\ne 3$. Similarly, $[k,\ell ]$
is a \texttt{good block} of $[n]$ with respect to $\pmb{\eta }$
if $(\eta _k,\dots  ,\eta _{\ell })=(1,1,1,\dots  ,1,0)$.
We say that $[k,\ell ]$ is a \texttt{maximal good block} if either $k=1  $ or $\eta _{k-1}\ne 1$.
We write $B(\pmb{\eta })$, ($G(\pmb{\eta })$) for the collection of maximal  bad (good) blocks with respect to $\pmb{\eta }$, respectively. That is,
\begin{multline}
\label{w85}
B(\pmb{\eta }):=
\left\{
[k,\ell ]:
1\leq k< \ell <\infty  ,\
\eta _k=0,\
\eta _{k+1}=\cdots=\eta _{\ell }=3,
\eta _{\ell +1}\ne 3
\right\}
\\
\bigcup
\left\{ [k,\infty  ]: 1\leq k,\
\eta _k=0,\
\eta _{k+1}=\eta _{k+2}=\eta _{k+3}=\cdots =3
\right\}.
\end{multline}
Similarly,
\begin{multline}
  \label{w84}
  G(\pmb{\eta }):=
  \left\{
  [k,\ell ]:
  1\leq k< \ell <\infty  ,\
  \eta _k=\cdots=\eta _{\ell -1}=1,\
  \eta _{\ell }=0,
  \eta _{k-1}\ne 1
  \right\}
  \\
  \bigcup
  \left\{ [k,\infty  ]: 1\leq k,\
  \eta _k=
  \eta _{k+1}=\eta _{k+2}=\eta _{k+3}=\cdots =1
  \right\}.
  \end{multline}
\end{definition}

\begin{definition}
  \textbf{A "bad" and a "good" partition of $[n]$} \\
  Given an $n\in \mathbb{N} \cup \{\infty \}$ and a
  $\pmb{\eta }\in \mathcal{A}^n$, the following definitions are meant to be with respect to $\pmb{\eta }$.
  \begin{enumerate}
    [{\bf (1)}]
      \item
    $
    A_{\text{Good}}(\pmb{\eta }):=[n]\setminus \bigcup\limits _{[k,\ell ]\in G(\pmb{\eta }) }[k,\ell ], \text{ and }
    A_{\text{Bad}}(\pmb{\eta }):=[n]\setminus \bigcup\limits_{[k,\ell ]\in B(\pmb{\eta }) }[k,\ell ],
    $
    \item $
    B_{\text{Good}}(\pmb{\eta }):=\bigcup\limits _{[k,\ell ]\in G(\pmb{\eta })}
    \left\{ \ell  \right\}, \text{ and }
    B_{\text{Bad}}(\pmb{\eta }):=\bigcup\limits _{[k,\ell ]\in B(\pmb{\eta })}
    \left\{ \ell  \right\},
    $
    \item
   $C_{\text{Good}}(\pmb{\eta }):=\bigcup\limits _{[k,\ell ]\in G(\pmb{\eta }) }[k,\ell -1],
    \text{ and }
    C_{\text{Bad}}(\pmb{\eta }):=\bigcup\limits _{[k,\ell ]\in B(\pmb{\eta }) }[k,\ell -1]
    $,
\item $D_{\text{Good}}(\pmb{\eta }):=
A_{\text{Good}}(\pmb{\eta })\cup B_{\text{Good}}(\pmb{\eta })
$, \text{ and }
$D_{\text{Bad}}(\pmb{\eta }):=
A_{\text{Bad}}(\pmb{\eta })\cup B_{\text{Bad}}(\pmb{\eta })
$.
    \end{enumerate}
    We use most frequently $C_{\text{Bad}}(\pmb{\eta })$ and
    $D_{\text{Bad}}(\pmb{\eta })$, so we also explain their meaning in words:

    $C_{\text{Good}}(\pmb{\eta })$ is the collection of
    indices which are in a good box of $[n]$ with respect to $\pmb{\eta }$ but not as a right endpoint of a maximal good box (with respect to $\pmb{\eta }$).

    $D_{\text{Good}}(\pmb{\eta })$ is the collection of
    indices which are either the right endpoint of a  maximal good box of $[n]$ or
    not contained in any good boxes of $[n]$ (with respect to $\pmb{\eta }$).
  \end{definition}
  In this way we partition $[n]$ into
  $C_{\text{Good}}(\pmb{\eta })\cup D_{\text{Good}}(\pmb{\eta })$. The indices in $D_{\text{Good}}(\pmb{\eta })$
cause less problem than the ones in $C_{\text{Good}}(\pmb{\eta })$. This is indicated by the following fact. Before stating it
recall that $\mathcal{I}_{\mathbf{i}}$  was defined in \eqref{u99}. Using this definition we get that
for an $\mathbf{i}\in\Sigma _A^*$
\begin{equation}
\label{y03}
\mathcal{I}_{\mathbf{i}}=
\left\{ \pmb{\eta}\in \mathcal{A}^{|\mathbf{i}|}:
\Pi (\pmb{\eta})=\Pi (\mathbf{i})
\right\}.
\end{equation}
\begin{fact}\label{y76}
  Let $\mathbf{i}\in\mathcal{T}_n$ and let $z\in D_{\text{Good}}(\mathbf{i})$. Then, for every $\pmb{\eta}\in \mathcal{I}_{\mathbf{i}}$ we have
  \begin{equation}
  \label{y75}
  \pmb{\eta}|_z\in \mathcal{I}_{\mathbf{i}|_z},
  \quad
  \sigma ^z \pmb{\eta}\in \mathcal{I}_{\sigma ^z \mathbf{i}}
  \end{equation}
  where $\pmb{\tau}|_z:=(\tau _1,\dots  ,\tau _z)$ if
$\pmb{\tau}=(\tau _1,\dots    ,\tau _n)\in\mathcal{A}_n$ and
$z\leq n$.
\end{fact}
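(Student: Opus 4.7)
The plan is to establish a structural description of $\mathcal{I}_{\mathbf{i}}$ for $\mathbf{i}\in\mathcal{T}_n$ and read off the splitting at positions $z\in D_{\text{Good}}(\mathbf{i})$ directly from it. The proposed structural lemma is: $\pmb{\eta}\in\mathcal{I}_{\mathbf{i}}$ if and only if (i) $\eta_j=i_j$ for every $j\in A_{\text{Good}}(\mathbf{i})$, and (ii) for each maximal good block $[k,\ell]\in G(\mathbf{i})$ of length $L:=\ell-k+1$, the restriction $(\eta_k,\dots,\eta_\ell)$ has the form $(\overline{1}^a,0,\overline{3}^b)$ for some $a,b\geq 0$ with $a+b=L-1$. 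Intuitively, the only way to modify $\mathbf{i}$ without changing its projection is to slide the "$0$" within each maximal good block of $\mathbf{i}$.

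The easy direction of the structural lemma is the short computation $\Pi(\overline{1}^a,0,\overline{3}^b)=\sum_{j=0}^{a+b-1}3^{-j}$, which depends only on $a+b$; so swapping each block independently preserves the global projection. For the converse I would induct on $n$. Let $k_0$ be the first index where $\pmb{\eta}\neq\mathbf{i}$, and let $m_0>k_0$ be the least index at which $\Pi(\pmb{\eta}|_{m_0})=\Pi(\mathbf{i}|_{m_0})$ (this exists since equality holds at $m=n$). Applying Fact \ref{w92} to $(i_{k_0},\dots,i_{m_0})$ and $(\eta_{k_0},\dots,\eta_{m_0})$ (after shifting out the common prefix), the minimality of $k_0,m_0$ yields two possible pairings; the one that would force $\mathbf{i}|_{[k_0,m_0]}$ to contain the pattern $(0,3)$ is excluded since $\mathbf{i}\in\mathcal{T}_n$. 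Hence $\mathbf{i}|_{[k_0,m_0]}=(\overline{1}^{m_0-k_0},0)$ and $\pmb{\eta}|_{[k_0,m_0]}=(0,\overline{3}^{m_0-k_0})$. Let $[\tilde k,m_0]\supseteq[k_0,m_0]$ be the unique maximal good block of $\mathbf{i}$ containing $[k_0,m_0]$; by minimality of $k_0$ we have $\pmb{\eta}=\mathbf{i}$ on $[\tilde k,k_0-1]$, so $\pmb{\eta}|_{[\tilde k,m_0]}=(\overline{1}^{k_0-\tilde k},0,\overline{3}^{m_0-k_0})$ is exactly a pattern of the claimed form. Then $\sigma^{m_0}\pmb{\eta}\in\mathcal{I}_{\sigma^{m_0}\mathbf{i}}$, and the inductive hypothesis applied to $\sigma^{m_0}\mathbf{i}\in\mathcal{T}_{n-m_0}$ completes the argument.

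With the structural lemma in hand, Fact \ref{y76} is immediate. The hypothesis $z\in D_{\text{Good}}(\mathbf{i})=A_{\text{Good}}(\mathbf{i})\cup B_{\text{Good}}(\mathbf{i})$ says exactly that $z$ is never an interior index of a maximal good block of $\mathbf{i}$; so every maximal good block of $\mathbf{i}$ lies entirely in $[1,z]$ or entirely in $[z+1,n]$. Splitting the structural description at $z$ shows that $\pmb{\eta}|_z$ satisfies (i)--(ii) with respect to $\mathbf{i}|_z$ and that $\sigma^z\pmb{\eta}$ satisfies them with respect to $\sigma^z\mathbf{i}$, giving both $\pmb{\eta}|_z\in\mathcal{I}_{\mathbf{i}|_z}$ and $\sigma^z\pmb{\eta}\in\mathcal{I}_{\sigma^z\mathbf{i}}$. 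The main obstacle is the inductive step of the structural lemma: correctly identifying the "first disagreement" subword of $\mathbf{i}$ as lying inside a unique maximal good block and using the constraint $\mathbf{i}\in\mathcal{T}_n$ to rule out the symmetric alternative in Fact \ref{w92}; the combinatorial bookkeeping after that is straightforward.
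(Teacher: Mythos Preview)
Your proof is correct. The structural lemma you prove---that $\pmb{\eta}\in\mathcal{I}_{\mathbf{i}}$ iff $\pmb{\eta}$ agrees with $\mathbf{i}$ outside the maximal good blocks and has the shape $(\overline{1}^a,0,\overline{3}^b)$ on each one---is exactly right, and your inductive argument via Fact~\ref{w92} goes through (the constraint $\mathbf{i}\in\mathcal{T}_n$ indeed eliminates the alternative that would place a $(0,3)$ pattern inside $\mathbf{i}$). Once the lemma is in hand, the splitting at any $z\in D_{\text{Good}}(\mathbf{i})$ is immediate, since no maximal good block straddles $z$.

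The paper's route is slightly different in presentation: rather than proving your structural lemma directly, it introduces the substitution map $\Phi:\Sigma^*\to\Sigma_A^*$ (which rewrites each maximal bad block $(0,\overline{3}^{\ell})$ as $(\overline{1}^{\ell},0)$) and establishes the characterization $\mathcal{I}_{\mathbf{i}}=\Phi^{-1}(\mathbf{i})$ in Fact~\ref{y78}(c). The proof of Fact~\ref{y76} then observes that every maximal bad block of $\pmb{\eta}$ becomes a good block of $\mathbf{i}=\Phi(\pmb{\eta})$ sharing the same right endpoint, hence sits inside a maximal good block of $\mathbf{i}$; thus $z\in D_{\text{Good}}(\mathbf{i})$ forces $z\in D_{\text{Bad}}(\pmb{\eta})$, so truncation at $z$ commutes with $\Phi$. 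In content the two arguments are equivalent---your structural lemma is precisely an explicit unpacking of $\Phi^{-1}(\mathbf{i})$---but yours is more self-contained (it never names $\Phi$) and makes the block-by-block picture of $\mathcal{I}_{\mathbf{i}}$ fully transparent, whereas the paper's version is terser because the $\Phi$ machinery is already in place.
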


 Note that
 for all $\mathbf{a},\mathbf{b},\mathbf{c},\mathbf{d}\in\Sigma _A^*$
\begin{equation}
\label{y79}
\Pi (\mathbf{a})=\Pi (\mathbf{b})\  \&\
\Pi (\mathbf{c})=\Pi (\mathbf{d})
\Longrightarrow
\Pi (\mathbf{a},0,\underbrace{3,\dots  ,3}_{\ell -1},\mathbf{c})
=
\Pi (\mathbf{b},\underbrace{1,\dots  ,1}_{\ell -1},0,\mathbf{d}).
\end{equation}
This is immediate from  the definition \eqref{y96} of the natural projection $\Pi $ since
\begin{equation}
\label{y30}
\Pi (0,\underbrace{3,\dots  ,3}_{\ell -1})
=
0\cdot 3^{-1}+\sum_{k=2}^{\ell  }3\cdot 3^{-(k-1)}
=
\sum_{k=1}^{\ell-1  }1\cdot 3^{-(k -1)}+0\cdot 3^{-(\ell -1)}
=
\Pi (\underbrace{1,\dots  ,1}_{\ell -1},0).
\end{equation}
Now we summarize some important properties of the mapping $\Phi $ introduced in Definition \ref{w91}:
\begin{fact}\label{y78}
 \begin{enumerate}
 [{\bf (a)}]
   \item For every $\pmb{\eta}\in \mathcal{A}^n$ we have $\Phi (\pmb{\eta})\in \mathcal{T}_n$,
   \item $\Pi (\pmb{\eta})=\Pi (\Phi (\pmb{\eta}))$
   if $\pmb{\eta}\in \mathcal{A}^n$,
   \item $\mathcal{I}_{\mathbf{i}}=\left\{ \pmb{\eta}\in\mathcal{A}^n:\Phi (\pmb{\eta})=\mathbf{i} \right\}$,
    for any  $\mathbf{i}\in \mathcal{T}_n$.
 \end{enumerate}
\end{fact}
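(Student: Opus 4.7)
The plan is to verify the three parts in order, with (a) and (b) requiring direct verification from the definition of $\Phi$, and (c) following as a short corollary using the earlier identification \eqref{y86}.

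For part (a), I would prove that no pair $(i_k, i_{k+1})$ can equal $(0,3)$ by casework on whether the indices $k, k+1$ lie inside some maximal bad block of $\pmb{\eta}$. Inside a single maximal bad block $[j,\ell]$, the substituted word is $(1,\ldots,1,0)$, which never produces $(0,3)$ internally. If $k = \ell$ is the right end of a maximal bad block (so $i_k = 0$), then maximality forces $\eta_{k+1} \ne 3$, so $\eta_{k+1} \in \{0,1\}$; checking whether $k+1$ starts a new bad block shows $i_{k+1} \in \{0,1\}$ in every subcase. Finally, if $k$ is outside every maximal bad block then $i_k = \eta_k$, and $i_k = 0$ together with $\eta_{k+1} = 3$ would make $[k, k+1]$ a bad block, contradicting the assumption. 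This is the step I expect to require the most care, essentially because I have to chase the maximality of bad blocks through the boundary cases.

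For part (b), the substitutions performed by $\Phi$ act on pairwise disjoint maximal bad blocks, and positions outside these blocks are left unchanged. Thus $\Pi(\pmb{\eta}) - \Pi(\Phi(\pmb{\eta}))$ decomposes as a sum, indexed by the maximal bad blocks $[k,\ell]$, of local differences of the form
\[
\sum_{j=k}^{\ell} \eta_j 3^{-(j-1)} - \sum_{j=k}^{\ell} i_j 3^{-(j-1)}
= 3^{-(k-1)}\Bigl(\Pi(0,\overline{3}^{\ell - k}) - \Pi(\overline{1}^{\ell-k}, 0)\Bigr),
\]
which vanishes by \eqref{y30}. (For the infinite-word case in Definition \ref{w91}(b), the same identity holds in the limit, since $\sum_{m\ge 1} 3\cdot 3^{-m} = \sum_{m\ge 0} 3^{-m} - 1 + 0$ — but here we only need the finite version.) Hence $\Pi(\pmb{\eta}) = \Pi(\Phi(\pmb{\eta}))$.

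For part (c), the inclusion $\{\pmb{\eta}:\Phi(\pmb{\eta})=\mathbf{i}\}\subseteq \mathcal{I}_{\mathbf{i}}$ is immediate from part (b), together with the fact that $S_{\pmb{\eta}}=S_{\mathbf{i}}$ iff $\Pi(\pmb{\eta})=\Pi(\mathbf{i})$ for words of equal length (the same reasoning giving \eqref{y86}). For the reverse inclusion, let $\pmb{\eta}\in \mathcal{I}_{\mathbf{i}}$. By part (a), $\Phi(\pmb{\eta})\in \mathcal{T}_n$; by part (b), $\Pi(\Phi(\pmb{\eta}))=\Pi(\pmb{\eta})=\Pi(\mathbf{i})$. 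Since both $\Phi(\pmb{\eta})$ and $\mathbf{i}$ lie in $\mathcal{T}_n$ and have the same projection, \eqref{y86} forces $\Phi(\pmb{\eta})=\mathbf{i}$. The only nontrivial input beyond what has been established is part (a); once that is in hand, (b) is a bookkeeping computation and (c) is a short two-line deduction via \eqref{y86}.
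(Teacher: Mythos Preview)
Your proposal is correct and follows essentially the same approach as the paper: part (a) by checking that the substitution eliminates all $(0,3)$ pairs (you fill in the casework that the paper dismisses as ``obvious from Definition \ref{w91}''), part (b) via the local identity \eqref{y30} (equivalently \eqref{y79}) applied block by block, and part (c) via the injectivity of $\Pi$ on $\mathcal{T}_n$ (you cite \eqref{y86}, the paper cites part (a) of Fact \ref{y90}, which is the same thing). The only difference is the level of detail you supply for (a), not the underlying argument.
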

\begin{proof}
  Part (a) is obvious from Definition \ref{w91} since we kill all bad blocks of $\pmb{\eta}$.

  To prove part (b) we apply \eqref{y79}
  with $\mathbf{a}=\mathbf{c}$ and $\mathbf{b}=\mathbf{d}$ in every step of the construction
  of $\Phi (\pmb{\eta})$ in Definition \ref{w91}.

  To prove part (c), observe that the inclusion
  "$\supset $" follows from part (b) and \eqref{y03}. In order to verify the  inclusion "$\subset $" in (c),
  we assume the opposite to get a contradiction.
  That is, we assume that there exists an $\pmb{\eta}\in \mathcal{A}^{|\mathbf{i}|}$ such that
  $\pmb{\eta}\in \mathcal{I}_{\mathbf{i}}$
  but $\Phi (\pmb{\eta})\ne \mathbf{i}$.
  But as we saw in part (b), $\Pi (\pmb{\eta})=
  \Pi (\Phi (\pmb{\eta}))$. In this way the distinct
  $\mathbf{i},\Phi (\pmb{\eta})\in \mathcal{T}_n$ satisfy $\Pi (\Phi (\pmb{\eta}))= \Pi (\mathbf{i})$. This is impossible by part (a) of Fact \ref{y90}.
\end{proof}
Part (c) of Fact \ref{y74} and the definition of $\Phi $
(Definition \ref{w91}) imply that
\begin{equation}
\label{y25}
\mathbf{i}\in \mathcal{T}_n\quad \& \quad
G(\mathbf{i})=\emptyset \quad
\Longrightarrow\quad
\mathcal{I}_{\mathbf{i}}
=\left\{ \mathbf{i} \right\}.
\end{equation}
Similarly, the following inequality is an immediate consequence of the definition of $\Phi $ and part (c) of Fact \ref{y78}
\begin{equation}
\label{c97}
\#\mathcal{I}_n\leq n,\quad \text{for every } n.
\end{equation}

We partition $\mathcal{A}^n$ as follows
\begin{equation}
\label{y77}
\mathcal{A}^n=\bigcup _{\mathbf{i}\in\mathcal{T}_n}
\mathcal{I}_{\mathbf{i}}, \text{ where }
\mathcal{I}_{\mathbf{i}}:=
\left\{ \pmb{\eta}\in\mathcal{A}^n: \Pi (\pmb{\eta})=\Pi (\mathbf{i}) \right\}=
\left\{ \pmb{\eta}\in\mathcal{A}^n:
\Phi (\pmb{\eta})=\mathbf{i} \right\}.
\end{equation}
Using Fact \ref{y78} and \eqref{y86} this is a partition indeed.

\begin{proof}[Proof of Fact \ref{y76}]
  Let $\pmb{\eta}\in\mathcal{A}^n$ for an $n\geq 2$.
  By part (c) of Fact \ref{y78} we have
$\Phi (\pmb{\eta})=\mathbf{i}$.
Then, by the definition of the mapping $\Phi $
we get $\mathbf{i}$ from $\pmb{\eta}$
by replacing all maximal bad blocks of $\pmb{\eta}$ by the corresponding good blocks. That is, if  $[k,\ell ]\in B(\pmb{\eta})$, then we
define  $i_k=\cdots =i_{\ell -1}=1$ and $i_\ell =0$.
Every bad block in $\pmb{\eta}$ is a good block in $\mathbf{i}$.
If we stop at an index $z\in D_{\text{Good}}(\mathbf{i})$, then the collection of maximal good blocks of
$\mathbf{i}|_z$ are the same as the collection of
those maximal good blocks of $\mathbf{i}$ which intersect $[z]=\left\{ 1,\dots  ,z \right\}$.
So, if we apply the definition of $\Phi $ to
$\pmb{\eta}|_z$ we get that
$\Phi (\pmb{\eta}|_z)=\mathbf{i}|_z$.
The second part follows from the first part and the definition of $\Pi $.
\end{proof}

Using a little modification of this argument we can also prove that

\begin{fact}\label{y13}
  Let $\pmb{\eta}\in \mathcal{A}^n$, where $n\in \mathbb{N}\cup \{\infty\}$. Let $z<n$ and $\mathbf{i}:=\Phi (\pmb{\eta})|_z$.
\begin{enumerate}
[{\bf (a)}]
  \item  If $z\in D_{Bad}(\pmb{\eta})$, then
  \begin{equation}
  \label{y12}
  \Phi (\pmb{\eta}|_z)=\Phi (\pmb{\eta})|_z \quad \text{ that is }\quad
  \pmb{\eta}|_z\in \mathcal{I}_{\mathbf{i}}.
  \end{equation}
  \item If  $z\in C_{Bad}(\pmb{\eta})$, then the z-th coordinate of
$\Phi (\pmb{\eta})$ is equal to $1$ and
  \begin{equation}
  \label{a97}
  \Phi (\pmb{\eta}|_z)=\Phi (\pmb{\eta})|_{z-1}0\ne \Phi (\pmb{\eta})|_{z} \quad
\text{  that is  }
  \quad
  \pmb{\eta}|_z\in \mathcal{I}_{\mathbf{i}^-0}.
  \end{equation}
\item Consequently, we get that for all $z< n$ we have
\begin{equation}
\label{a96}
\pmb{\eta}|_z\in \mathcal{I}_{\mathbf{i}}\cup
 \mathcal{I}_{\mathbf{i}^-0}.
\end{equation}
\end{enumerate}

\end{fact}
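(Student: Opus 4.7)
The plan is to exploit the locality of the map $\Phi$: by Definition \ref{w91}, $\Phi$ substitutes each maximal bad block $(0,\overline{3}^{\ell-k})$ of $\pmb{\eta}$ by $(\overline{1}^{\ell-k},0)$ independently of the other coordinates. Thus the only question is how this substitution interacts with truncation at position $z$, and the partition $[n] = C_{\text{Bad}}(\pmb{\eta}) \cup D_{\text{Bad}}(\pmb{\eta})$ is the natural dichotomy: $z\in D_{\text{Bad}}(\pmb{\eta})$ corresponds to truncation not cutting through any bad block, while $z\in C_{\text{Bad}}(\pmb{\eta})$ corresponds to truncation strictly inside one.

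For part (a), if $z \in A_{\text{Bad}}(\pmb{\eta}) \cup B_{\text{Bad}}(\pmb{\eta})$ then the set of maximal bad blocks of $\pmb{\eta}|_z$ coincides with $\{[k,\ell]\in B(\pmb{\eta}):\ell\leq z\}$, so applying the substitution rules of Definition \ref{w91} to $\pmb{\eta}|_z$ produces exactly $\Phi(\pmb{\eta})|_z$. Fact \ref{y78}(c) then gives $\pmb{\eta}|_z\in\mathcal{I}_{\mathbf{i}}$.

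For part (b), the more delicate case, suppose $z\in C_{\text{Bad}}(\pmb{\eta})$, so there is a maximal bad block $[k,\ell]\in B(\pmb{\eta})$ with $k\leq z<\ell$. The truncation $\pmb{\eta}|_z$ ends in $(0,\overline{3}^{z-k})$, a trailing (possibly partial) bad block of $\pmb{\eta}|_z$; the substitution rule replaces this trailing block by $(\overline{1}^{z-k},0)$, while leaving positions $[1,k-1]$ identical to $\Phi(\pmb{\eta})|_{k-1}$. Meanwhile $\Phi(\pmb{\eta})$ substitutes the full block $[k,\ell]$ by $(\overline{1}^{\ell-k},0)$, so positions $k,\dots,\ell-1$ of $\Phi(\pmb{\eta})$ are all $1$'s and in particular the $z$-th coordinate equals $1$. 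Comparing, $\Phi(\pmb{\eta}|_z)$ and $\Phi(\pmb{\eta})|_z$ agree on positions $[1,z-1]$ but differ at position $z$: the former is $0$, the latter is $1$. This simultaneously yields $\Phi(\pmb{\eta}|_z)=\mathbf{i}^-0\ne\mathbf{i}$, hence $\pmb{\eta}|_z\in\mathcal{I}_{\mathbf{i}^-0}$ by Fact \ref{y78}(c). Part (c) is then immediate, since $D_{\text{Bad}}(\pmb{\eta})\cup C_{\text{Bad}}(\pmb{\eta})=[n]$ covers every admissible $z$.

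The main obstacle is bookkeeping at the boundaries of bad blocks. One must justify that a partial bad block ending precisely at the last index of the finite word $\pmb{\eta}|_z$ still triggers the substitution rule of Definition \ref{w91} — the condition $\eta_{\ell+1}\ne 3$ being vacuously satisfied when $\ell+1$ exceeds the length of the word — and verify that the infinite-tail case ($n=\infty$, $\ell=\infty$) is handled uniformly, which is automatic since any finite $z<\infty$ produces a finite trailing bad block inside $\pmb{\eta}|_z$ to which rule (a) of Definition \ref{w91} applies directly. Once these conventions are fixed, the argument is a direct comparison of the explicit substitution rules on $\pmb{\eta}|_z$ versus on $\pmb{\eta}$ followed by truncation.
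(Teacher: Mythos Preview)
Your proof is correct and is precisely the argument the paper intends: the paper does not give a standalone proof of this Fact but simply says it follows by ``a little modification'' of the proof of Fact~\ref{y76}, and your bookkeeping of how truncation at $z$ interacts with the block substitution of Definition~\ref{w91} is exactly that modification made explicit. Your treatment of the boundary case $z=k$ (where the ``trailing bad block'' degenerates to the single symbol $0$ and no substitution occurs) and of the vacuous maximality condition at the end of a finite word are the only subtleties, and you address both adequately.
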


\begin{example}\label{y74}
  Let
  $\pmb{\eta}=(0,\underbrace{3,\dots  ,3}_{n-1})$,
  $\mathbf{i}=(\underbrace{1,\dots  ,1}_{n-1},0)$ and let
  $k=n-1$. Moreover, let $\pmb{\tau}^\ell :=(\underbrace{1,\dots  1}_{\ell },0,\underbrace{3,\dots  ,3}_{n-1-\ell })$, where $0\leq\ell \leq n-1$.
  Then,
  $$\mathcal{I}_{\mathbf{i}}=\left\{ \pmb{\tau}^0,\dots  ,\pmb{\tau}^{n-1}  \right\}
  \quad
  \text{ and }\quad
  \mathcal{I}_{\mathbf{i}^-}=\big\{
    (\underbrace{1,\dots  ,1}_{n-1})
   \big\},
  $$
  where $\mathbf{i}^-$ was defined in \eqref{y73}.
\end{example}
\noindent
That is, there exists $n-1$ elements  $\pmb{\eta}\in \mathcal{I}_{\mathbf{i}}$  such that
$\Phi (\pmb{\eta}|_{n-1})\not\in \mathcal{I}_{\mathbf{i}|_{n-1}}$.

\begin{fact}\label{y70}
  Let $\mathbf{i}\in\mathcal{T}_n$, $\pmb{\eta}\in \mathcal{I}_{\mathbf{i}}$.
  Let $[k,\ell ]\in G(\mathbf{\mathbf{i}})$
  and $[k',\ell']\in B(\pmb{\eta})$ such that
  $[k,\ell ]\cap [k',\ell']\ne \emptyset $. Then,
  $\ell =\ell '$.
\end{fact}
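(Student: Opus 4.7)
The plan is to exploit the \emph{local} nature of the map $\Phi$: every maximal bad block $[k^\ast,\ell^\ast]\in B(\pmb{\eta})$ is replaced, in-place, by the pattern $(\overline{1}^{\ell^\ast-k^\ast},0)$ at the \emph{same} index positions in $\mathbf{i}=\Phi(\pmb{\eta})$, while indices outside every bad block of $\pmb{\eta}$ are left untouched. Applying this to the block $[k',\ell']\in B(\pmb{\eta})$ from the statement yields in $\mathbf{i}$ the identities
\[
i_{k'}=i_{k'+1}=\cdots=i_{\ell'-1}=1,\qquad i_{\ell'}=0.
\]

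The main step is then a short case analysis comparing $\ell'$ with $\ell$. Since $[k,\ell]\in G(\mathbf{i})$ we have $i_k=\cdots=i_{\ell-1}=1$ and $i_\ell=0$. First, suppose $\ell'<\ell$: any $m\in [k,\ell]\cap[k',\ell']$ satisfies $k\leq m\leq \ell'<\ell$, hence $\ell'\in\{k,\ldots,\ell-1\}$ and so $i_{\ell'}=1$ from the good block, contradicting $i_{\ell'}=0$ from the substituted bad block. Next suppose $\ell'>\ell$: the intersection forces $k'\leq \ell$, so $\ell\in\{k',\ldots,\ell'-1\}$ and therefore $i_\ell=1$ from the bad-block image, contradicting $i_\ell=0$ from the good block. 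The only remaining possibility is $\ell'=\ell$, as claimed.

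The one point that needs care is justifying that the displayed values $i_{k'},\ldots,i_{\ell'}$ really are determined by $[k',\ell']$ alone, rather than being overwritten by substitutions coming from other maximal bad blocks. This is where the \emph{maximality} stipulated in Definition \ref{w91} enters: two distinct maximal bad blocks of $\pmb{\eta}$ are disjoint, and the substitution at one of them cannot spill into, or generate new $(0,3)$ patterns triggering substitutions at, any other, since the substituted word ends with $0$ and the character $\eta_{\ell^\ast+1}$ immediately after a maximal bad block is by definition not $3$. Hence $\Phi$ can be computed block-by-block, which validates the displayed equalities. I do not anticipate any serious obstacle beyond this bookkeeping.
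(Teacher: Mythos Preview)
Your proof is correct and follows essentially the same route as the paper: a two-case comparison of $\ell'$ with $\ell$, using that $\Phi$ replaces the maximal bad block $[k',\ell']$ of $\pmb{\eta}$ in place by $(\overline{1}^{\ell'-k'},0)$, so the values $i_{k'},\dots,i_{\ell'}$ are forced and clash with those dictated by the good block $[k,\ell]$ unless $\ell=\ell'$. Your added justification that distinct maximal bad blocks are disjoint and the substitutions do not interact is a welcome clarification that the paper leaves implicit.
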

Namely, $\ell '>\ell $ is not possible since this would follow that $i_\ell =0$, $i_{\ell +1}=3$. $\ell '<\ell $ is not possible since this would follow $k\leq \ell '<\ell $
and
$i_{\ell '}=0$  which is not possible since by definition $i_{\ell '}=1$.

\subsection{The cases when $\psi(\mathbf{i})$ is multiplicative}\label{y59}
For the rest of the paper we fix some notation.
Let
\begin{equation}
\label{y26}
\chi:=\chi(\mathbf{i}):=
\left\{
\begin{array}{ll}
\max\left\{ \ell :
[k,\ell ]\in G(\mathbf{i}) \right\}
,&
\hbox{if $G(\mathbf{i})\ne \emptyset $;}
\\
0
,&
\hbox{otherwise.}
\end{array}
\right.
\end{equation}
Moreover, let
\begin{equation}
  \label{u89}
  \xi:=\xi(\mathbf{j}):=
  \min\left\{
  k: \forall \ell >k,\ j_{\ell }=1
   \right\}.
  \end{equation}

Clearly,
\begin{equation}
\label{y42}
G(\mathbf{i})\ne \emptyset \Longrightarrow
(i_{\chi-1},i_{\chi})=(1,0).
\end{equation}

We also get by definition  that
\begin{equation}
\label{u60}
\chi (\pmb{\ell })\leq \xi (\pmb{\ell }), \quad
\forall \pmb{\ell }\in\mathcal{T}_p, \quad \forall p.
\end{equation}

\begin{fact}\label{y27}
For $\ell \geq \chi$
\begin{equation}
\label{y24}
\psi (\mathbf{i})=\psi (\mathbf{i}|_{\ell })\cdot p_{i_{\ell +1}\dots  i_n}.
\end{equation}
\end{fact}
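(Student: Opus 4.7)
The plan is to establish the bijection
\begin{equation*}
\mathcal{I}_{\mathbf{i}}=\bigl\{\pmb{\tau}\cdot(i_{\ell+1},\dots,i_n):\pmb{\tau}\in\mathcal{I}_{\mathbf{i}|_\ell}\bigr\},
\end{equation*}
and then to conclude using the multiplicativity $p_{\pmb{\eta}}=p_{\eta_1}\cdots p_{\eta_n}$ of the Bernoulli measure: summing over $\pmb{\eta}\in\mathcal{I}_{\mathbf{i}}$ and pulling the fixed tail $p_{i_{\ell+1}}\cdots p_{i_n}$ out of the sum yields the stated factorisation at once.

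For the forward inclusion, let $\pmb{\eta}\in\mathcal{I}_{\mathbf{i}}$, equivalently $\Phi(\pmb{\eta})=\mathbf{i}$ by Fact~\ref{y78}(c). By Definition~\ref{w91}, the map $\Phi$ replaces each maximal bad block of $\pmb{\eta}$ with the good block having the same endpoints, and leaves every other coordinate of $\pmb{\eta}$ untouched. In particular, each maximal bad block of $\pmb{\eta}$ has its right endpoint equal to the right endpoint of some maximal good block of $\mathbf{i}$, hence at most $\chi(\mathbf{i})\leq\ell$. Consequently the positions $\ell+1,\dots,n$ lie outside every bad block of $\pmb{\eta}$, so $\eta_j=i_j$ for all $j>\ell$. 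The inequality $\ell\geq\chi(\mathbf{i})$ also places $\ell$ in $D_{\text{Good}}(\mathbf{i})$, and Fact~\ref{y76} then yields $\pmb{\eta}|_\ell\in\mathcal{I}_{\mathbf{i}|_\ell}$, completing this direction.

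For the reverse inclusion, fix $\pmb{\tau}\in\mathcal{I}_{\mathbf{i}|_\ell}$ and set $\pmb{\eta}:=\pmb{\tau}\cdot(i_{\ell+1},\dots,i_n)$. It suffices to show that no maximal bad block of $\pmb{\eta}$ crosses position~$\ell$; granted this, the tail contains no bad block (being a subword of $\mathbf{i}\in\mathcal{T}_n$), so the bad blocks of $\pmb{\eta}$ coincide with those of $\pmb{\tau}$, and a block-by-block application of $\Phi$ gives $\Phi(\pmb{\eta})=\Phi(\pmb{\tau})\cdot(i_{\ell+1},\dots,i_n)=\mathbf{i}$. A bad block of $\pmb{\eta}$ straddling~$\ell$ would force either $(\eta_\ell,\eta_{\ell+1})=(0,3)$ with $\tau_\ell=0$, or a bad block of $\pmb{\tau}$ ending at $\ell$ (so $\tau_\ell=3$) followed by $i_{\ell+1}=3$. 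In either case the action of $\Phi$ on $\pmb{\tau}$ produces $i_\ell=0$ at position~$\ell$, contradicting $\mathbf{i}\in\mathcal{T}_n$, which forbids the digram $(0,3)$.

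With the bijection established, the multiplicativity $p_{\pmb{\eta}}=p_{\pmb{\tau}}\cdot p_{i_{\ell+1}}\cdots p_{i_n}$ gives
\begin{equation*}
\psi(\mathbf{i})=\sum_{\pmb{\eta}\in\mathcal{I}_{\mathbf{i}}}p_{\pmb{\eta}}=\Bigl(\sum_{\pmb{\tau}\in\mathcal{I}_{\mathbf{i}|_\ell}}p_{\pmb{\tau}}\Bigr)\cdot p_{i_{\ell+1}}\cdots p_{i_n}=\psi(\mathbf{i}|_\ell)\cdot p_{i_{\ell+1}\dots i_n},
\end{equation*}
as claimed. The main obstacle in this plan is the boundary-compatibility check in the reverse inclusion; once the bijection is secured, the remaining step is a one-line computation.
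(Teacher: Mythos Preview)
Your proof is correct and follows essentially the same route as the paper: the paper simply asserts that the identity is ``immediate from Definition~\ref{w91}'' (together with \eqref{y25} in the degenerate case $G(\mathbf{i})=\emptyset$), which is precisely the bijection $\mathcal{I}_{\mathbf{i}}=\{\pmb{\tau}\cdot(i_{\ell+1},\dots,i_n):\pmb{\tau}\in\mathcal{I}_{\mathbf{i}|_\ell}\}$ that you carefully establish. Your argument is a fully detailed version of what the paper leaves to the reader, including the boundary check that no bad block can straddle position~$\ell$.
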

\begin{proof}
 If $G(\mathbf{i})\ne\emptyset$, then
the proof is immediate from Definition \ref{w91}.  In the case when $G(\mathbf{i})=\emptyset$,
then $\chi=0$, $\psi (\mathbf{i}|_0)=\psi (\flat)=1$.
Then,
the fact follows from \eqref{y25}.
\end{proof}

\begin{lemma}\label{u77}
 Let $n\geq 2$, $\mathbf{i}\in\mathcal{T}_n$ and
  $z\in D_{\text{Good}}(\mathbf{i})
 $. Then,
 \begin{equation}
 \label{u76}
 \psi(\mathbf{i})=
 \psi(\mathbf{i}|_z)\cdot
 \psi(\sigma ^z\mathbf{i}).
 \end{equation}
\end{lemma}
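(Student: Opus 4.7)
The plan is to exhibit a natural bijection between $\mathcal{I}_{\mathbf{i}}$ and the Cartesian product $\mathcal{I}_{\mathbf{i}|_z}\times \mathcal{I}_{\sigma^z\mathbf{i}}$, and then conclude by the product structure of the probability vector $\mathbf{p}$. Concretely, I would define the map
\[
\Psi:\mathcal{I}_{\mathbf{i}}\to
\mathcal{I}_{\mathbf{i}|_z}\times \mathcal{I}_{\sigma^z\mathbf{i}},
\qquad
\Psi(\pmb{\eta}):=(\pmb{\eta}|_z,\,\sigma^z\pmb{\eta}),
\]
and check that it is a bijection.

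For well-definedness of $\Psi$: this is exactly the content of Fact \ref{y76}, which uses the hypothesis $z\in D_{\text{Good}}(\mathbf{i})$; without this hypothesis, the splitting of $\pmb{\eta}$ at position $z$ might land in $\mathcal{I}_{\mathbf{i}^-0}$ rather than in $\mathcal{I}_{\mathbf{i}|_z}$, as illustrated by Example \ref{y74}. Injectivity of $\Psi$ is immediate because concatenation recovers $\pmb{\eta}$ from $(\pmb{\eta}|_z,\sigma^z\pmb{\eta})$. For surjectivity, given $\pmb{\alpha}\in\mathcal{I}_{\mathbf{i}|_z}$ and $\pmb{\beta}\in\mathcal{I}_{\sigma^z\mathbf{i}}$, one forms the concatenation $\pmb{\alpha}\pmb{\beta}\in\mathcal{A}^n$ and computes
\[
S_{\pmb{\alpha}\pmb{\beta}}=S_{\pmb{\alpha}}\circ S_{\pmb{\beta}}
= S_{\mathbf{i}|_z}\circ S_{\sigma^z\mathbf{i}} = S_{\mathbf{i}},
\]
so that $\pmb{\alpha}\pmb{\beta}\in\mathcal{I}_{\mathbf{i}}$ by the definition \eqref{u99}.

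Once $\Psi$ is established as a bijection, the lemma follows by the multiplicativity of the product measure: for any $\pmb{\eta}=\pmb{\alpha}\pmb{\beta}$ with $|\pmb{\alpha}|=z$ we have $p_{\pmb{\eta}}=p_{\pmb{\alpha}}\cdot p_{\pmb{\beta}}$, hence
\[
\psi(\mathbf{i})
=\sum_{\pmb{\eta}\in\mathcal{I}_{\mathbf{i}}}p_{\pmb{\eta}}
=\sum_{\pmb{\alpha}\in\mathcal{I}_{\mathbf{i}|_z}}\sum_{\pmb{\beta}\in\mathcal{I}_{\sigma^z\mathbf{i}}}
p_{\pmb{\alpha}}\,p_{\pmb{\beta}}
=\psi(\mathbf{i}|_z)\cdot\psi(\sigma^z\mathbf{i}).
\]

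The only genuinely delicate step is the well-definedness of $\Psi$, but this is already taken care of by Fact \ref{y76}; all other ingredients (the semigroup identity $S_{\pmb{\alpha}}\circ S_{\pmb{\beta}}=S_{\pmb{\alpha}\pmb{\beta}}$, the multiplicativity of $p$, and the fact that prefixes/suffixes of elements of $\mathcal{T}_n$ remain in $\Sigma_A^*$ so that $\psi$ is defined on them) are routine. So the proof amounts to citing Fact \ref{y76}, doing the two-line bijection argument, and then the one-line product computation above.
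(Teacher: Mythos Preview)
Your proof is correct and essentially the same as the paper's: the paper proves the two inequalities $\psi(\mathbf{i})\leq\psi(\mathbf{i}|_z)\psi(\sigma^z\mathbf{i})$ and $\psi(\mathbf{i}|_z)\psi(\sigma^z\mathbf{i})\leq\psi(\mathbf{i})$ separately, which amounts precisely to your well-definedness/injectivity and surjectivity of $\Psi$, with Fact~\ref{y76} invoked for the first and the semigroup identity (equivalently, the formula $\Pi(\pmb{\eta})=\Pi(\pmb{\eta}|_z)+3^{-z}\Pi(\sigma^z\pmb{\eta})$) for the second. Your bijection packaging is slightly cleaner, but the content is identical.
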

\begin{proof}
  $$
\psi(\mathbf{i})=
\sum_{\pmb{\eta} \in \mathcal{I}_{\mathbf{i}}}
p_{\pmb{\eta}}
=
\sum_{\pmb{\eta} \in \mathcal{I}_{\mathbf{i}}}
p_{\pmb{\eta}|_z} \cdot
p_{\sigma ^z \pmb{\eta}}
\leq
\sum_{\pmb{\omega} \in \mathcal{I}_{\mathbf{i}|_z}}
p_{\pmb{\omega}}
\cdot
\sum_{\pmb{\tau} \in \mathcal{I}_{\sigma ^z\mathbf{i}}}
p_{\pmb{\tau}}
=
\psi(\mathbf{i}|_z)\cdot
\psi(\sigma ^z\mathbf{i}),
  $$
where in the third step we used that
by Fact \ref{y76}
for an $\pmb{\eta}\in \mathcal{I}_{\mathbf{i}}$
we have
$\pmb{\eta}|_z\in \mathcal{I}_{\mathbf{i}|_z}$
and
$\sigma ^z\pmb{\eta}\in \mathcal{I}_{\sigma ^z\mathbf{i}}$ since $z\in D_{\text{Good}}(\mathbf{i})
$.
  On the other hand,
$$
\psi(\mathbf{i}|_z)\cdot
\psi(\sigma ^z\mathbf{i})
=
\sum_{\pmb{\omega} \in \mathcal{I}_{\mathbf{i}|_z}}
p_{\pmb{\omega}}
\cdot
\sum_{\pmb{\tau} \in \mathcal{I}_{\sigma ^z\mathbf{i}}}
p_{\pmb{\tau}}
=
\sum_{\pmb{\omega} \in \mathcal{I}_{\mathbf{i}|_z}
\atop
\pmb{\tau} \in \mathcal{I}_{\sigma ^z\mathbf{i}}
}p_{\pmb{\omega}\pmb{\tau}}
\leq
\sum_{\pmb{\eta} \in \mathcal{I}_{\mathbf{i}}}
p_{\pmb{\eta}}
=
\psi(\mathbf{i}),
$$
where in the one but last step we used that
$(i_{z},i_{z+1})\ne (0,3)$ since $\mathbf{i}\in \mathcal{T}_n$.
Moreover, we also used that
by \eqref{y96},
\begin{equation}
\label{w83}
\Pi (\pmb{\eta })=\Pi (\pmb{\eta }|_z)+3^{-z}\Pi (\sigma ^z\pmb{\eta }),\quad \forall \pmb{\eta }\in\mathcal{A}^n.
\end{equation}
Hence, $\pmb{\omega }\in \mathcal{I}_{\mathbf{i}|_z}$
and $\pmb{\tau}\in \mathcal{I}_{\sigma ^z\mathbf{i}}$
implies that $\pmb{\eta}=\pmb{\omega}\pmb{\tau}\in \mathcal{I}_{\mathbf{i}}$.
\end{proof}

\subsection{The properties of the sequence $\left\{a_w \right\}_w$}\label{w80}
Observe that for a $w\geq  1$
\begin{equation}
\label{u84}
\mathcal{I}_{\overline{1}^w}=\left\{ \overline{1}^w \right\}
\quad
\text{ and }
 \quad
\mathcal{I}_{\overline{1}^{w-1}0}
=
\left\{
\overline{1}^{w-1-\ell }0
\overline{3}^{\ell }
 \right\}_{\ell =0}^{w-1 }.
\end{equation}
Hence,
\begin{equation}
  \label{u43}
  \psi (\overline{1}^{w})=
  p _{1}^{w}
  \quad
  \text{ and }
  \quad
  \psi \left( \overline{1}^{w-1}0 \right)=
  a_w:
  =
  \frac{p_0}{p_1}p _{1}^{w}
  \sum_{\ell =0}^{w-1 }
  \left( \frac{p_3}{p_1} \right)^{\ell }.
  \end{equation}
So,
  \begin{equation}
    \label{u85}
\psi \left( \overline{1}^{w-1}0 \right)
    >
    \psi (\overline{1}^{w})
\Longleftrightarrow
    \sum\limits_{\ell =0}^{w-1 }\left( \frac{p_3}{p_1} \right)^{\ell }>\frac{p_1}{p_0}.
    \end{equation}
This motivates the following definition.
For an $n\geq 1$ and $\mathbf{i}\in \mathcal{T}_n$ we define
\begin{equation}
  \label{u87}
  \mathbf{i}^*:=
  \left\{
  \begin{array}{ll}
  \mathbf{i}^-0
  ,&
  \hbox{if\qquad  $n-\xi(\mathbf{i})\geq 1$ and  $\sum\limits_{\ell =0}^{n-\xi(\mathbf{i})-1 }\left( \frac{p_3}{p_1} \right)^{\ell }>\frac{p_1}{p_0}$;}
  \\
  \mathbf{i}
  ,&
  \hbox{otherwise.}
  \end{array}
  \right.
  \end{equation}
We introduce
\begin{equation}
\label{u59}
\mathfrak{A}:=
\left\{
  q\in \mathbb{N}:
  q\geq 1\  \& \
  \sum\limits_{\ell =0}^{q-1 }\left( \frac{p_3}{p_1} \right)^{\ell }>\frac{p_1}{p_0}
 \right\}.
\end{equation}
Observe that
\begin{equation}
\label{u58}\mathbf{i}^*\ne \mathbf{i}
 \Longleftrightarrow
|\mathbf{i}|-\xi (\mathbf{i}) \in \mathfrak{A}.
\end{equation}
Recall that in \eqref{y87} we assumed that $p_3\leq p_1$.
A simple calculation shows that
\begin{equation}
\label{u57}
\mathfrak{A}\ne \emptyset
\Longleftrightarrow
p_1-p_3<p_0.
\end{equation}
Putting together \eqref{u85}
and \eqref{u59} we get that
  \begin{equation}
  \label{u63}
 q\in \mathfrak{A}
  \Longleftrightarrow
  \widehat{\psi}(\overline{1}^q)> \psi(\overline{1}^q).
  \end{equation}\
  We define
\begin{equation}
\label{u81}
q_0:=
\left\{
\begin{array}{ll}
\min \mathfrak{A}
,&
\hbox{if $p_1-p_3
<p_0$;}
\\
\infty
,&
\hbox{otherwise.}
\end{array}
\right.
\end{equation}

Now we prove the sub-multiplicative property of the sequence $\left\{ a_w \right\} _{w=1}^{\infty}$.

\begin{fact}\label{u49}
   Let $u,v\geq 1$. Then, there are constants $C_6, C_7>0$ such that
\begin{equation}
\label{u48}
a_{u+v}<C_6 a_{u}a_v,
\end{equation}
and
\begin{equation}
\label{u47}
\sum_{\ell =0}^{u+v }
\left( \frac{p_3}{p_1} \right)^{\ell }
<
C_7\cdot
\sum_{\ell =0}^{u-1}
\left( \frac{p_3}{p_1} \right)^{\ell }
\cdot
\sum_{\ell =0}^{v -1}
\left( \frac{p_3}{p_1} \right)^{\ell }.
\end{equation}
\end{fact}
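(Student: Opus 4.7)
The plan is to reduce both inequalities to bounds on the geometric‐type sums $S(w):=\sum_{\ell=0}^{w-1}(p_3/p_1)^\ell$ and then split on whether $p_3=p_1$ or $p_3<p_1$.

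First I would set $\rho :=p_3/p_1$, which by the Principal Assumption~\ref{y88} satisfies $\rho \leq 1$, and rewrite
\[
a_w=\frac{p_0}{p_1}\,p_1^{w}\,S(w),\qquad
\sum_{\ell=0}^{w-1}\rho^\ell=S(w),\qquad
\sum_{\ell=0}^{w}\rho^\ell=S(w+1).
\]
A direct substitution then gives
\[
\frac{a_{u+v}}{a_u a_v}=\frac{p_1}{p_0}\cdot\frac{S(u+v)}{S(u)\,S(v)},
\]
so \eqref{u48} and \eqref{u47} both reduce to showing that the ratios $S(u+v)/(S(u)S(v))$ and $S(u+v+1)/(S(u)S(v))$ stay bounded uniformly in $u,v\geq 1$.

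Next I would split into two cases. If $\rho <1$, then $S(w)$ is monotone in $w$ with $1\leq S(w)\leq 1/(1-\rho )$ for every $w\geq 1$, so both ratios are bounded above by $1/(1-\rho )$, and \eqref{u48}, \eqref{u47} hold with $C_6=\frac{p_1}{p_0(1-\rho )}$ and $C_7=\frac{1}{1-\rho }$. If $\rho =1$, then $S(w)=w$ and one checks the elementary inequalities $(u+v)/(uv)\leq 2$ and $(u+v+1)/(uv)\leq 3$ for $u,v\geq 1$, which give $C_6=2p_1/p_0$ and $C_7=3$.

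There is no real obstacle here: the statement is a purely algebraic sub-multiplicativity estimate, and the only minor care needed is to treat $\rho =1$ (i.e.\ $p_1=p_3$) separately, since in that case $S(w)$ grows linearly rather than remaining bounded. Taking $C_6$ and $C_7$ to be the maxima of the two constants produced above yields a single pair of constants that works uniformly.
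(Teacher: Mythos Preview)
Your proof is correct and follows essentially the same approach as the paper: both arguments split on whether $p_3=p_1$ or $p_3<p_1$, bound the geometric sums accordingly, and observe that \eqref{u48} reduces to \eqref{u47}. You simply spell out the reduction $a_{u+v}/(a_u a_v)=(p_1/p_0)\,S(u+v)/(S(u)S(v))$ and the explicit constants a bit more carefully than the paper does.
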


\begin{proof}
  First we prove \eqref{u47}. Recall
  that we assumed that $0<p_3\leq p_1$. If $p_1=p_3$, then we can clearly choose
  a constant $C_7 >0$ such that $u+v+1< C_7\cdot u\cdot v$.
  If $p_3<p_1$, then \eqref{u47} holds with the choice of $C_7=\frac{1}{1-\frac{p_3}{p_1}}$. It is immediate that \eqref{u48}
  follows from \eqref{u47}.
\end{proof}

The following fact is an important but trivial consequence of the definitions.
\begin{fact}\label{u42}
Given a  $\mathbf{k}\in \mathcal{T}_n$  with $k_n\ne 1$ and a natural number $w\geq1$, we have
\begin{equation}
\label{u41}
\psi \left( \left( \mathbf{k} \overline{1}^w \right)^* \right)=
\psi ( \mathbf{k})\psi \left( (\overline{1}^w)^* \right).
\end{equation}
\end{fact}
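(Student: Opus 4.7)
The strategy is to reduce Fact \ref{u42} to Lemma \ref{u77} after verifying that the starring operator $(\cdot)^*$ acts consistently on both $\mathbf{k}\overline{1}^w$ and $\overline{1}^w$. The first step is to compare $\xi$-values. Since $k_n\ne 1$ and positions $n+1,\dots,n+w$ of $\mathbf{k}\overline{1}^w$ are all $1$, the definition \eqref{u89} gives $\xi(\mathbf{k}\overline{1}^w)=n$, so $|\mathbf{k}\overline{1}^w|-\xi(\mathbf{k}\overline{1}^w)=w$. Likewise $\xi(\overline{1}^w)=0$ and $|\overline{1}^w|-\xi(\overline{1}^w)=w$. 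By \eqref{u58}, $(\mathbf{k}\overline{1}^w)^*\ne \mathbf{k}\overline{1}^w$ iff $w\in\mathfrak{A}$ iff $(\overline{1}^w)^*\ne \overline{1}^w$; in both branches the modification is the same (replace the last $1$ by $0$).

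I then split into two parallel cases. If $w\notin\mathfrak{A}$, both stars act trivially and the goal becomes $\psi(\mathbf{k}\overline{1}^w)=\psi(\mathbf{k})\,\psi(\overline{1}^w)$. If $w\in\mathfrak{A}$, both stars rewrite the last $1$ to a $0$ and the goal becomes $\psi(\mathbf{k}\overline{1}^{w-1}0)=\psi(\mathbf{k})\,\psi(\overline{1}^{w-1}0)$. In both cases I apply Lemma \ref{u77} at the splitting index $z=n$ to the relevant word of length $n+w$: in Case 1 with $\mathbf{i}=\mathbf{k}\overline{1}^w$, yielding $\mathbf{i}|_n=\mathbf{k}$ and $\sigma^n\mathbf{i}=\overline{1}^w$; in Case 2 with $\mathbf{i}=\mathbf{k}\overline{1}^{w-1}0$, yielding $\mathbf{i}|_n=\mathbf{k}$ and $\sigma^n\mathbf{i}=\overline{1}^{w-1}0$. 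Note also that $(k_n,1)\ne(0,3)$ ensures both concatenations lie in $\mathcal{T}_{n+w}$, so Lemma \ref{u77} is legitimately applicable.

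The one substantive verification is that $n\in D_{\text{Good}}$ of the full word in each case. For Case 1, the suffix $\overline{1}^w$ contains no $0$, so no good block of $\mathbf{k}\overline{1}^w$ can extend past position $n$. I then case-split on $k_n$: if $k_n=3$ (or if $k_n=0$ but $k_{n-1}\ne 1$) position $n$ lies in no good block and $n\in A_{\text{Good}}$; if $k_n=0$ and $k_{n-1}=1$, position $n$ is the right endpoint of a maximal good block in $\mathbf{k}$ and $n\in B_{\text{Good}}$. For Case 2, the only new feature is the good block $[n+1,n+w]$, which is maximal precisely because $\eta_n=k_n\ne 1$; this block does not include index $n$, so the status of $n$ in $A_{\text{Good}}\cup B_{\text{Good}}$ is determined exactly as in Case 1.

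The main potential obstacle is a subtle one: ensuring that no new good block is created by the concatenation that would move position $n$ from $D_{\text{Good}}$ to $C_{\text{Good}}$ and thereby block the use of Lemma \ref{u77}. The hypothesis $k_n\ne 1$ is precisely what prevents this, because any good block that would straddle position $n$ would require $k_n=1$. Beyond this bookkeeping, the proof is a one-line invocation of Lemma \ref{u77} combined with $\psi(\overline{1}^w)=p_1^w$ and $\psi(\overline{1}^{w-1}0)=a_w$ from \eqref{u43}.
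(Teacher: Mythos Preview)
Your proof is correct and follows essentially the same approach as the paper. The paper's argument is identical in structure: it observes $\xi(\mathbf{k}\overline{1}^w)=n$, deduces $(\mathbf{k}\overline{1}^w)^*=\mathbf{k}(\overline{1}^w)^*$ (your case split), notes that $k_n\ne 1$ forces $n\in D_{\text{Good}}(\mathbf{k}(\overline{1}^w)^*)$, and then applies Lemma~\ref{u77}; you have simply spelled out the $D_{\text{Good}}$ verification in more detail than the paper does.
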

\begin{proof} By definition,
  $\xi (\mathbf{k} \overline{1}^w)=n$. Therefore,
$$
(\mathbf{k} \overline{1}^w)^*= \mathbf{k} \overline{1}^{w-1}0
\Longleftrightarrow
w\geq q_0
\Longleftrightarrow
(\overline{1}^w)^*= \overline{1}^{w-1}0.
$$
Hence, $(\mathbf{k} \overline{1}^w)^*=\mathbf{k}(\overline{1}^w)^*$. Observe that by $k_n\ne 1$  we have $n\in D_{\text{Good}}(\mathbf{k}(\overline{1}^w)^*)$. Using this and Fact \ref{u77} we get
$$
\psi ((\mathbf{k} \overline{1}^w)^*)
=
\psi (\mathbf{k}(\overline{1}^w)^*)
=
\psi(\mathbf{k})\psi((\overline{1}^w)^*).
$$
\end{proof}

The second condition in first line of \eqref{u87}
  is to guarantee that $\widehat{\psi}(\mathbf{i})> \psi (\mathbf{i}) $.

 We know by \eqref{a91} that $\widehat{\psi }(\mathbf{i})=\max\{\psi (\mathbf{i}),\psi (\mathbf{i}^-0)\}$.
\begin{fact}\label{w78}
  For every $n\geq 1$ and $\mathbf{i}\in \mathcal{T}_n$
\begin{equation}
\label{w82}
\mathbf{i}^*=
\left\{
\begin{array}{ll}
\mathbf{i}^-0
,&
\hbox{if $\widehat{\psi} (\mathbf{i})>\psi (\mathbf{i})$;}
\\
\mathbf{i}
,&
\hbox{if $\widehat{\psi} (\mathbf{i})=\psi (\mathbf{i})$.}
\end{array}
\right.
\end{equation}
\end{fact}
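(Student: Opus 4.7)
The plan is to reduce the claim to the direct comparison of $\psi(\mathbf{i}^-0)$ with $\psi(\mathbf{i})$ and then invoke \eqref{u85}, splitting on whether $i_n = 1$.

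In the easy case $i_n \neq 1$, \eqref{a91} gives $\widehat{\psi}(\mathbf{i}) = \psi(\mathbf{i})$, so I need only verify $\mathbf{i}^* = \mathbf{i}$. But $i_n \neq 1$ forces $\xi(\mathbf{i}) = n$ by \eqref{u89}, so $n - \xi(\mathbf{i}) = 0 < 1$, and the ``otherwise'' branch of \eqref{u87} indeed delivers $\mathbf{i}^* = \mathbf{i}$.

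The substantive case is $i_n = 1$. Here \eqref{a91} rephrases the claim as: $\psi(\mathbf{i}^-0) > \psi(\mathbf{i})$ if and only if $\sum_{\ell=0}^{w-1}(p_3/p_1)^\ell > p_1/p_0$, where $w := n - \xi(\mathbf{i}) \geq 1$. My approach is to write $\mathbf{i} = \mathbf{k}\overline{1}^w$ with $\mathbf{k} := \mathbf{i}|_{n-w}$ (taking $\mathbf{k} = \flat$ when $\mathbf{i} = \overline{1}^n$) and to factor both quantities at the index $|\mathbf{k}|$ via Lemma \ref{u77}. Together with \eqref{u43} and the convention $\psi(\flat) = 1$ from \eqref{u39}, this will yield
\begin{equation*}
\psi(\mathbf{i}) \;=\; \psi(\mathbf{k})\,p_1^{w}, \qquad \psi(\mathbf{i}^-0) \;=\; \psi(\mathbf{k})\,a_w,
\end{equation*}
so that, since $\psi(\mathbf{k}) > 0$, the inequality $\psi(\mathbf{i}^-0) > \psi(\mathbf{i})$ is equivalent to $a_w > p_1^w$, which by \eqref{u85} is precisely the sum condition appearing in \eqref{u87} that selects the branch $\mathbf{i}^* = \mathbf{i}^-0$.

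The step I expect to be the main obstacle is confirming that Lemma \ref{u77} actually applies at the splitting index $n-w$, i.e., that $n-w \in D_{\text{Good}}(\mathbf{i}) \cap D_{\text{Good}}(\mathbf{i}^-0)$. This I would handle by invoking the definition \eqref{w84}: every maximal good block of a finite word must terminate at a letter $0$, and since the suffix $\overline{1}^w$ of $\mathbf{i}$ contains no $0$, all good blocks of $\mathbf{i}$ lie in $[1, n-w]$, placing $n-w$ either outside every good block or exactly at a right endpoint, hence in $D_{\text{Good}}(\mathbf{i})$. For $\mathbf{i}^-0$, the suffix $\overline{1}^{w-1}0$ either forms (for $w \geq 2$) a maximal good block $[n-w+1, n]$---starting strictly to the right of $n-w$ precisely because $k_{|\mathbf{k}|} \neq 1$ or $\mathbf{k} = \flat$---or (for $w = 1$) is a lone $0$ belonging to no good block at all; in either case $n-w \in D_{\text{Good}}(\mathbf{i}^-0)$, the factorisations above are justified, and the proof is complete.
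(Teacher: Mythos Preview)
Your proof is correct and follows essentially the same approach as the paper's: split on whether $i_n=1$, and in the nontrivial case factor at the index $\xi(\mathbf{i})$ to reduce the comparison $\psi(\mathbf{i}^-0)\gtrless\psi(\mathbf{i})$ to $a_w\gtrless p_1^{w}$, which is exactly \eqref{u85}. The paper packages the factorisation through Fact~\ref{u42} and the equivalences \eqref{u58}, \eqref{u63}, whereas you invoke Lemma~\ref{u77} and \eqref{u43} directly and are more explicit about checking $n-w\in D_{\text{Good}}$; the substance is identical.
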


\begin{proof}
If $i_n\ne 1$,  then by \eqref{a91} we have
$\widehat{\psi}(\mathbf{i})=\psi(\mathbf{i})$.
In this case $n-\xi (\mathbf{i})=0$. Hence,
it follows from \eqref{u87} that
$\mathbf{i}^*=\mathbf{i}$.
So, we may assume that $i_n=1$.  That is,  $n-\xi (\mathbf{i})\geq  1$. Hence,
$\mathbf{i}=\mathbf{i}|_{\xi (\mathbf{i})}\overline{1}^{n-\xi (\mathbf{i})}$. Using Fact \ref{u42}, we get
\begin{equation}
\label{w77}
\psi(\mathbf{i}^*)=\psi(\mathbf{i}|_{\xi (\mathbf{i})})\cdot
\psi \left(\left(  \overline{1}^{n-\xi (\mathbf{i})} \right) ^*\right).
\end{equation}
Using that $\xi (\mathbf{\ell })\in D_{\text{Good}}(\mathbf{i})$
we obtain from Lemma \ref{u77} that
\begin{equation}
\label{w76}
\psi(\mathbf{i})=\psi(\mathbf{i}|_{\xi (\mathbf{i})})\cdot
\psi \left(  \overline{1}^{n-\xi (\mathbf{i})} \right).
\end{equation}
Putting together the last two displayed formulas with
\eqref{u58} and \eqref{u63} we get the assertion of Fact.
\end{proof}
Observe that in virtue of Fact \ref{w78} we have
\begin{equation}
\label{w75}
\widehat{\psi}(\mathbf{i})=\psi(\mathbf{i}^*).
\end{equation}

\subsection{The Proof of Property-\ref{w97} }\label{w79}

\begin{lemma}\label{y23}
  There exists a constant $C_6>1$ such that the
  following holds.
    Let $q\geq 1$ and let
    $\mathbf{j}\in \mathcal{T}_q$ be arbitrary.
  Let  $u,v\in \mathcal{A}$
  such that $\mathbf{j}u,\mathbf{j}v\in \mathcal{T}_{q+1}$.
 Then,
  \begin{equation}
  \label{y22}
  C _{6}^{-1}(|\mathbf{j}|+1)^{-1}
  <
  \frac{\psi (\mathbf{j}v)}{\psi (\mathbf{j}u)}
  <C_6(|\mathbf{j}|+1).
  \end{equation}
  \end{lemma}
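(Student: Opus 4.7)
The plan is to reduce both $\psi(\mathbf{j}u)$ and $\psi(\mathbf{j}v)$ to a common "core" factor that cancels in the ratio, leaving a quantity that involves only the last few symbols. Write $\mathbf{j}=\mathbf{k}\,\overline{1}^w$, where $w\geq 0$ is the length of the maximal trailing block of $1$'s, so that either $\mathbf{k}=\flat$ or $k_{|\mathbf{k}|}\in\{0,3\}$. I claim that when $\mathbf{k}\neq\flat$, the index $|\mathbf{k}|$ lies in $D_{\text{Good}}(\mathbf{j}a)$ for every $a\in\mathcal{A}$ with $\mathbf{j}a\in\mathcal{T}_{q+1}$: indeed, either $k_{|\mathbf{k}|}=3$ (in which case $|\mathbf{k}|\in A_{\text{Good}}$) or $k_{|\mathbf{k}|}=0$ (in which case $|\mathbf{k}|$ is the right endpoint of a maximal good block of $\mathbf{j}a$, so $|\mathbf{k}|\in B_{\text{Good}}$). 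Applying Lemma \ref{u77} at $z=|\mathbf{k}|$ yields the factorisation $\psi(\mathbf{j}a)=\psi(\mathbf{k})\cdot\psi(\overline{1}^w a)$, so that in the ratio
\[
\frac{\psi(\mathbf{j}v)}{\psi(\mathbf{j}u)}=\frac{\psi(\overline{1}^w v)}{\psi(\overline{1}^w u)},
\]
the $\psi(\mathbf{k})$ factor cancels. In the remaining case $\mathbf{k}=\flat$, we simply have $\mathbf{j}=\overline{1}^w$ and there is nothing to strip off.

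Next, use the explicit formulas \eqref{u84} and \eqref{u43}: $\psi(\overline{1}^{w+1})=p_1^{w+1}$, $\psi(\overline{1}^w 0)=a_{w+1}=\frac{p_0}{p_1}p_1^{w+1}\sum_{\ell=0}^{w}(p_3/p_1)^\ell$, and (when the word $\overline{1}^w 3$ is admissible in $\mathcal{T}_{w+1}$) $\psi(\overline{1}^w 3)=p_1^w p_3$, since the only preimage of $\overline{1}^w 3$ under $\Phi$ is itself (no bad block can sit inside it). Every admissible ratio $\psi(\overline{1}^w v)/\psi(\overline{1}^w u)$ with $u,v\in\mathcal{A}$ is now a product of ratios of $p_0,p_1,p_3$ and at most one copy of
\[
\Sigma_w:=\sum_{\ell=0}^{w}(p_3/p_1)^\ell\qquad\text{or its reciprocal}.
\]
Since by the Principal Assumption \ref{y88} we have $p_3\leq p_1$, the sum $\Sigma_w$ satisfies $1\leq\Sigma_w\leq w+1\leq |\mathbf{j}|+1$, so each such ratio is bounded above by $C(|\mathbf{j}|+1)$ and below by $C^{-1}(|\mathbf{j}|+1)^{-1}$, where $C$ depends only on $p_0,p_1,p_3$. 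Setting $C_6$ to be the maximum of these constants over the finitely many admissible pairs $(u,v)$ completes the proof.

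The only genuine subtlety is bookkeeping in the edge cases: when $w=0$, one must verify separately that the trivial ``splittings'' $\psi(\mathbf{k}a)=\psi(\mathbf{k})p_a$ hold for $a\in\{0,1\}$ (and for $a=3$ when $k_{|\mathbf{k}|}\neq 0$), which again follows from Lemma \ref{u77} after checking membership in $D_{\text{Good}}$; and when $j_q=0$, one restricts the comparison to $u,v\in\{0,1\}$ since $\mathbf{j}3\not\in\mathcal{T}_{q+1}$. Apart from this case-splitting, the only way the bound $|\mathbf{j}|+1$ can actually be attained is through the factor $\Sigma_w$ in the regime $p_3=p_1$, where the sum is exactly $w+1$; this is precisely the obstruction that prevents $\widehat{\psi}$ from being multiplicative and motivates the correction from $\psi$ to $\widehat{\psi}$ used throughout the paper.
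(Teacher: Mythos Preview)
Your proof is correct and follows essentially the same route as the paper: strip off the maximal trailing block of $1$'s in $\mathbf{j}$ (the paper writes $\mathbf{j}|_{\xi(\mathbf{j})}$ where you write $\mathbf{k}$), factor $\psi(\mathbf{j}a)=\psi(\mathbf{k})\psi(\overline{1}^w a)$, and compute the tail explicitly via \eqref{u43}, arriving at exactly the ratio \eqref{u68}. One cosmetic slip: when $k_{|\mathbf{k}|}=0$ but $|\mathbf{k}|=1$ or $k_{|\mathbf{k}|-1}\ne 1$, the index $|\mathbf{k}|$ is not the endpoint of any good block (good blocks have length $\geq 2$), but then $|\mathbf{k}|\in A_{\text{Good}}(\mathbf{j}a)$ anyway, so your conclusion $|\mathbf{k}|\in D_{\text{Good}}(\mathbf{j}a)$ is unaffected.
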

\begin{proof}

We may assume that $u\ne v$ and
\begin{equation}
\label{u75}
\text{ if } 0\in\left\{ u,v \right\},
\text{ then }
v=0.
\end{equation}
 Using that any good block ends with a $0$,
this implies that $\chi(\mathbf{j}v)\geq \chi(\mathbf{j}u)$. More precisely,
\begin{equation}
\label{y21}
\left(G(\mathbf{j}v)\ne\emptyset\
\&\
\chi(\mathbf{j}u)\ne \chi(\mathbf{j}v)
\right)
\Longrightarrow
\chi(\mathbf{j}u)<\chi(\mathbf{j}v)=q+1.
\end{equation}
Recall from \eqref{u89} that
\begin{equation*}
\xi:=\xi (\mathbf{j})=\min
\left\{
  k\in[0,q]:\forall \ell >k,\ j_{\ell }=1
 \right\},
\end{equation*}
where $\xi=q$ means that $\mathbf{j}$ does not end with a block of $1$s. If $\xi<q$, then we can write
\begin{equation}
\label{u73}
\mathbf{j}=\mathbf{j}|_{\xi}\overline{1}^{q-\xi}.
\end{equation}
By definition, we get that
\begin{equation}
\label{u72}
\xi\in\left\{ 0 \right\}\cup D_{\text{Good}}(\mathbf{j}).
\end{equation}

By Fact \ref{y76} we get that
\begin{equation}
\label{u71}
\pmb{\omega}\in \mathcal{I}_{\mathbf{j}u},\
\pmb{\tau}\in \mathcal{I}_{\mathbf{j}v}
\Longrightarrow
\Pi (\pmb{\omega}|_{\xi})
=
\Pi (\pmb{\tau}|_{\xi})
=
\Pi (\mathbf{j}|_{\xi})
=
\Pi (\mathbf{j}u|_{\xi})
=
\Pi (\mathbf{j}v|_{\xi}).
\end{equation}
Moreover,
\begin{equation}
\label{u70}
\mathcal{I}_{\mathbf{j}u}=
\left\{
  \pmb{\tau}\overline{1}^{q-\xi}u
:
\pmb{\tau}\in \mathcal{I}_{\mathbf{i}|_{\xi}}
  \right\},
\text{ if } u\in\left\{ 1,3 \right\},
\quad
 \mathcal{I}_{\mathbf{j}0}
 =
 \bigcup _{\ell =0}^{q-\xi}
 \left\{
   \pmb{\tau}\overline{1}^{q-\xi-\ell }0
   \overline{3}^{\ell }:
   \pmb{\tau}\in \mathcal{I}_{\mathbf{i}|_{\xi}}
  \right\}.
\end{equation}
Using that for $u\in \left\{ 1,3 \right\}$, $\chi(\mathbf{j}u)\leq \xi$ it follows from
Fact \ref{y27} that
 \begin{equation}
 \label{u69}
 \psi(\mathbf{j}u)=
 \psi(\mathbf{j}|_{\xi}) p _{1}^{q-\xi}p_u.
 \end{equation}
Using the second part of  \eqref{u70} we get
\begin{equation}
\label{u62}
\psi(\mathbf{j}0)=
 \psi(\mathbf{j}|_{\xi})\cdot
 \sum_{\ell =0}^{q-\xi}
p _{1}^{q-\xi-\ell  }
p _{3}^{\ell }p_0.
\end{equation}
Hence,
\begin{equation}
\label{u68}
\frac{\psi(\mathbf{j}v)}{\psi(\mathbf{j}u)}
=
\left\{
\begin{array}{ll}
\frac{
  \sum_{\ell =0}^{q-\xi}
  p _{1}^{q-\xi-\ell  }
  p _{3}^{\ell }p_0
}{p _{1}^{q-\xi}p_u}
=
\frac{p_0}{p_u}
\sum_{\ell =0}^{q-\xi}
\left( \frac{p_3}{p_1} \right)^{\ell }
,&
\hbox{ if $v=0$;}
\\
\frac{p_v}{p_u}
,&
\hbox{if $v\ne 0$.}
\end{array}
\right.
\end{equation}
That is, by \eqref{y87} we obtain
\begin{equation}
\label{u67}
\frac{p_{\min}}{p_{\max}}
\leq
\frac{\psi(\mathbf{j}v)}{\psi(\mathbf{j}u)}
\leq C_{14}
(|\mathbf{j}|+1-\xi),\quad
u\in\left\{ 1,3 \right\} \text{ and }
v\in\left\{ 0,1,3 \right\}\setminus \{u\},
\end{equation}
where $C_{14}:=
\left\{
\begin{array}{ll}
\frac{p_0p_1}{p_3 (p_1-p_3)}
,&
\hbox{if $p_1>p_3$;}
\\
\frac{p_0}{p_3}
,&
\hbox{if $p_0=p_3$.}
\end{array}
\right.
$
\end{proof}
We obtain from \eqref{u62} that
\begin{equation}
\label{u54}
\xi (\mathbf{i})\leq |\mathbf{i}|-1
\Longrightarrow
\psi (\mathbf{i}^-0)=
\psi (\mathbf{i}|_{\xi (\mathbf{i})})\cdot
\frac{p_0}{p_1}p _{1}^{|\mathbf{i}|-\xi (\mathbf{i})}
\sum_{\ell =0}^{|\mathbf{i}|-\xi (\mathbf{i})-1 }\left(
  \frac{p_3}{p_1}
 \right)^{\ell }
 =
 \psi (\mathbf{i}|_{\xi (\mathbf{i})})\cdot
 a_{|\mathbf{i}|-\xi (\mathbf{i})},
\end{equation}
where the sequence $\left\{ a_q \right\}_{q=1}^{\infty  }$
was defined in \eqref{u43}.
Hence, by Fact \ref{w78}
\begin{equation}
\label{u53}
\mathbf{i}^*\ne \mathbf{i}
\Longrightarrow
\widehat{\psi }(\mathbf{i})=\psi (\mathbf{i}^-0)=
\psi (\mathbf{i}|_{\xi (\mathbf{i})})\cdot
 a_{|\mathbf{i}|-\xi (\mathbf{i})}.
\end{equation}

\begin{fact}\label{u83}
  Let $\mathbf{i}\in \mathcal{T}_n$. If $\xi (\mathbf{i})=n$, then
 $i_n\ne 1$. So,  by definition $\psi(\mathbf{i})=\widehat{\psi}(\mathbf{i})$.
  There exists a
  $C_3>0$ such that
  whenever $\xi (\mathbf{i})\leq n- 1$, then  we have
  \begin{equation}
  \label{u80}
  \psi (\mathbf{i})\leq \widehat{\psi}(\mathbf{i}) \leq
  C_3\cdot (n-\xi(\mathbf{i}))\cdot\psi (\mathbf{i}).
  \end{equation}
   Moreover,
  \begin{equation}
  \label{u79}
  \mathbf{i}\ne \mathbf{i}^*
  \Longleftrightarrow
  \psi (\mathbf{i})< \widehat{\psi}(\mathbf{i})
  \Longleftrightarrow
  \max\left\{
    1,q_0
   \right\}
  \leq n-\xi(\mathbf{i}).
  \end{equation}
  \end{fact}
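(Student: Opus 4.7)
The first sentence is immediate from definitions: by \eqref{u89}, $\xi(\mathbf{i})=n$ means that $n-1$ fails the condition defining the minimum, so there exists $\ell>n-1$ with $i_\ell\ne1$, forcing $i_n\ne 1$; then \eqref{a91} directly yields $\widehat{\psi}(\mathbf{i})=\psi(\mathbf{i})$. So assume $\xi:=\xi(\mathbf{i})\le n-1$, which forces $i_n=1$, and in particular $\widehat{\psi}(\mathbf{i})=\max\{\psi(\mathbf{i}),\psi(\mathbf{i}^-0)\}\ge\psi(\mathbf{i})$. The lower bound in \eqref{u80} is thus free; the content is the upper bound.

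By \eqref{u73} we can decompose $\mathbf{i}=\mathbf{i}|_\xi\,\overline{1}^{\,n-\xi}$. Since $\xi\in\{0\}\cup D_{\mathrm{Good}}(\mathbf{i})$ by \eqref{u72}, Lemma \ref{u77} (or a direct application of \eqref{u69}) gives $\psi(\mathbf{i})=\psi(\mathbf{i}|_\xi)\,p_1^{\,n-\xi}$. On the other hand, \eqref{u54} yields
\[
\psi(\mathbf{i}^-0)=\psi(\mathbf{i}|_\xi)\cdot a_{n-\xi}=\psi(\mathbf{i}|_\xi)\cdot\frac{p_0}{p_1}\,p_1^{\,n-\xi}\sum_{\ell=0}^{n-\xi-1}\left(\frac{p_3}{p_1}\right)^{\!\ell}.
\]
Dividing the two,
\[
\frac{\psi(\mathbf{i}^-0)}{\psi(\mathbf{i})}
=\frac{p_0}{p_1}\sum_{\ell=0}^{n-\xi-1}\!\left(\frac{p_3}{p_1}\right)^{\!\ell}
\le\frac{p_0}{p_1}\,(n-\xi),
\]
where I used the Principal Assumption \ref{y88} ($p_3\le p_1$) to bound each summand by $1$. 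Setting $C_3:=\max\{1,p_0/p_1\}$ we obtain $\widehat{\psi}(\mathbf{i})\le C_3(n-\xi)\,\psi(\mathbf{i})$, which is \eqref{u80}.

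For the equivalences in \eqref{u79}, the first one $\mathbf{i}\ne\mathbf{i}^*\Leftrightarrow\psi(\mathbf{i})<\widehat{\psi}(\mathbf{i})$ is exactly Fact \ref{w78}. The second one follows from the definition \eqref{u87} (combined with \eqref{u58}): $\mathbf{i}\ne\mathbf{i}^*$ is equivalent to $n-\xi\ge 1$ \emph{and} $n-\xi\in\mathfrak{A}$. Now the partial sums $\sum_{\ell=0}^{q-1}(p_3/p_1)^\ell$ are nondecreasing in $q$, so $\mathfrak{A}$ is an upward-closed subset of $\mathbb{N}$; hence $\mathfrak{A}\ne\emptyset$ implies $\mathfrak{A}=\{q\ge q_0\}$ with $q_0$ as in \eqref{u81}, while $\mathfrak{A}=\emptyset$ corresponds to $q_0=\infty$. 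In either case, requiring both $n-\xi\ge 1$ and $n-\xi\ge q_0$ is the single inequality $n-\xi\ge\max\{1,q_0\}$, completing the chain of equivalences. The only non-routine ingredient is the elementary estimate of the geometric-type sum by $n-\xi$, which is the sole place where Assumption \ref{y88} is used.
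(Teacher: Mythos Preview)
Your proof is correct and follows essentially the same route as the paper: decompose $\mathbf{i}$ at $\xi$, compute $\psi(\mathbf{i})$ and $\psi(\mathbf{i}^-0)$ explicitly, take the ratio, and bound the geometric-type sum by its number of terms using $p_3\le p_1$. The paper cites \eqref{u68}, \eqref{u69}, \eqref{u62} from the proof of Lemma~\ref{y23}, whereas you cite \eqref{u54} and Lemma~\ref{u77} directly; these are the same computation. For the equivalences you invoke Fact~\ref{w78} and the upward-closedness of $\mathfrak{A}$, while the paper re-derives the inequality as \eqref{u56}; again the content is identical. Your choice $C_3=\max\{1,p_0/p_1\}$ is slightly more careful than the paper's $C_3=p_0/p_1$, since it covers the case $\widehat{\psi}(\mathbf{i})=\psi(\mathbf{i})$ with $n-\xi=1$ when $p_0<p_1$.
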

 Observe that Fact \ref{u83} implies that Property-\ref{w97}
holds.
\begin{proof}
Let $\mathbf{i}\in\mathcal{T}_n$ and
 $\mathbf{j}:=\mathbf{i}^-$.
 It is assumed that $\xi (\mathbf{i})\leq n- 1$. This implies that $\mathbf{i}=\mathbf{j}1$. If $\xi (\mathbf{i})=n-1$, then $\widehat{\psi}(\mathbf{i})=\psi (\mathbf{i})$ by definition. We may assume that
  $\xi (\mathbf{i})\leq n- 2$. Then,
  either $\widehat{\psi }(\mathbf{i})
  =\psi (\mathbf{i})$ or if not
  then by Fact \ref{w78}
  $\mathbf{i}^*=\mathbf{j}0$ and
 by \eqref{u87} and \eqref{u68}
 $$
 1\leq
\frac{\widehat{\psi }(\mathbf{i})}{\psi (\mathbf{i})}
\leq \frac{p_0}{p_1}(n-\xi ).
 $$
 This completes the proof of the first part with the choice of $C_3:=\frac{p_0}{p_1}$.

 To verify the second part we
 first observe that if $\xi (\mathbf{i})\geq n-1$, then $\mathbf{i}=\mathbf{i}^*$ so $\widehat{\psi }(\mathbf{i})=\psi (\mathbf{i})$.
 So, we may assume that $2\leq n-\xi (\mathbf{i})$.
In this case $\widehat{\psi }(\mathbf{i})=\psi (\mathbf{j}0)$ and
$\psi (\mathbf{i})=\psi (\mathbf{j}1)$.
We write $\xi :=\xi (\mathbf{j})$. Then, by
 \eqref{u69} and \eqref{u62} we get
that
\begin{equation}
\label{u56}
\psi (\mathbf{i})<\widehat{\psi }(\mathbf{i})
\Longleftrightarrow
p _{1}^{n-\xi  }
<
\sum_{\ell =0}^{n-1 -\xi }
p _{1}^{n-1-\xi -\ell }p _{3}^{\ell }p_0
\Longleftrightarrow
n-\xi \geq q_0.
\end{equation}
\end{proof}



\subsection{The Proof of Property-\ref{w98} }\label{w79}

\begin{lemma}\label{u65}
  There exists a $C_{10}>1$ such that
  for any $n,m\geq 1$, $\mathbf{k}\in \mathcal{T}_{n+m}$
  \begin{equation}
  \label{u64}
  \widehat{\psi}(\mathbf{k})\leq
  C_{10}\widehat{\psi}(\mathbf{k}|_n)
  \widehat{\psi}(\sigma ^n\mathbf{k}).
  \end{equation}
\end{lemma}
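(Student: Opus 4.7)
\textit{Strategy.} Set $\mathbf{a} := \mathbf{k}|_n$, $\mathbf{b} := \sigma^n \mathbf{k}$, $\xi_1 := \xi(\mathbf{a})$, $\xi_2 := \xi(\mathbf{b})$. The plan is to reduce the multiplicative inequality for $\widehat{\psi}$ to the sub-multiplicative estimate $a_{u+v}\leq C_6 a_u a_v$ from Fact \ref{u49}. Combining \eqref{a91}, \eqref{u54} and the identity $\psi(\mathbf{i})=\psi(\mathbf{i}|_{\xi(\mathbf{i})})\,p_1^{|\mathbf{i}|-\xi(\mathbf{i})}$ (valid when $i_{|\mathbf{i}|}=1$), one obtains the uniform representation
$$\widehat{\psi}(\mathbf{i})=\psi(\mathbf{i}|_{\xi(\mathbf{i})})\cdot f(|\mathbf{i}|-\xi(\mathbf{i})),\qquad f(0):=1,\ f(w):=\max\{p_1^w,a_w\}\ \text{for } w\geq 1,$$
and Fact \ref{u49} at once yields $f(u+v)\leq C_6 f(u) f(v)$. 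I would also record the block factorisation of $\psi$: any $\mathbf{w}\in\Sigma_A^*$ decomposes uniquely into maximal good blocks and singleton non-good-block digits, and $\psi(\mathbf{w})$ is the product of $a_{w_i}$ (one for each good block of length $w_i$) and $p_{d_j}$ (one for each singleton $d_j$). This follows by iterating Lemma \ref{u77} at every index of $D_{\text{Good}}(\mathbf{w})$, since both singleton positions and right-endpoints of good blocks belong to $D_{\text{Good}}$.

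The argument then splits into three cases according to the local structure at the splitting point $n$. When $k_n\neq 1$ (so $\xi_1=n$), position $n$ lies in $D_{\text{Good}}$ of both $\mathbf{k}$ and $\mathbf{k}^-0$, and two applications of Lemma \ref{u77} give $\widehat{\psi}(\mathbf{k})=\psi(\mathbf{a})\widehat{\psi}(\mathbf{b})\leq \widehat{\psi}(\mathbf{a})\widehat{\psi}(\mathbf{b})$. When $k_n=1$ and $\xi_2=0$ (i.e.\ $\mathbf{b}=\overline{1}^m$), one has $\mathbf{k}=\mathbf{a}|_{\xi_1}\overline{1}^{n+m-\xi_1}$, and applying the sub-multiplicativity of $f$ with $u=n-\xi_1$ and $v=m$ closes the estimate with constant $C_6$. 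The interesting case is $k_n=1$ and $\xi_2\geq 1$: let $s\geq 0$ be the length of the initial $1$-run of $\mathbf{b}$ (so $s<m$). If $b_{s+1}=3$, the merged run of $n-\xi_1+s$ consecutive $1$'s is followed by a $3$ rather than a $0$, so it contributes only $p_1^{n-\xi_1+s}p_3$ in the block factorisation and the decompositions of $\psi(\mathbf{k}|_{n+\xi_2})$ and $\psi(\mathbf{a})\psi(\mathbf{b}|_{\xi_2})$ coincide, giving the bound with constant $1$. If $b_{s+1}=0$, there is instead a maximal good block $\overline{1}^{n-\xi_1+s}0$ of length $n-\xi_1+s+1$ in $\mathbf{k}|_{n+\xi_2}$ contributing $a_{n-\xi_1+s+1}$, while on the $\mathbf{a}$-side the trailing $1$'s are mere singletons (factor $p_1^{n-\xi_1}$) and on the $\mathbf{b}$-side the corresponding good block has length only $s+1$ (factor $a_{s+1}$).

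\textit{Main obstacle.} The delicate point is this last sub-case ($b_{s+1}=0$), where a good block of $\mathbf{k}$ straddles the splitting point and has no natural decomposition across $\mathbf{a},\mathbf{b}$. Here the sub-multiplicativity of $\{a_w\}$ is precisely what is needed: applying Fact \ref{u49} with $u=n-\xi_1\geq 1$ and $v=s+1\geq 1$ gives $a_{n-\xi_1+s+1}\leq C_6 a_{n-\xi_1} a_{s+1}\leq C_6 f(n-\xi_1) a_{s+1}$, which absorbs the extra weight into $\widehat{\psi}(\mathbf{a})$. In every case the tail factor $f(m-\xi_2)$ of $\widehat{\psi}(\mathbf{b})$ matches that of $\widehat{\psi}(\mathbf{k})$ exactly, and collecting the estimates one can take $C_{10}:=C_6$.
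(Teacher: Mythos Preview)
Your proof is correct and follows essentially the same approach as the paper's: both arguments hinge on the multiplicativity of $\psi$ at positions in $D_{\text{Good}}$ (Lemma~\ref{u77}) and on the sub-multiplicativity $a_{u+v}\leq C_6\,a_u a_v$ (Fact~\ref{u49}) to handle the case where a good block straddles the cut at position $n$. Your introduction of $f(w)=\max\{p_1^w,a_w\}$ together with the representation $\widehat{\psi}(\mathbf{i})=\psi(\mathbf{i}|_{\xi(\mathbf{i})})\,f(|\mathbf{i}|-\xi(\mathbf{i}))$ is a tidy repackaging that collapses the paper's Case~1 (which splits into three sub-cases according to whether $\mathbf{i}^*=\mathbf{i}$ and $\mathbf{k}^*=\mathbf{k}$) into a single application of the sub-multiplicativity of $f$; your Case~3(ii) is exactly the paper's Case~2(b), and your remaining cases correspond to the paper's Case~2(a).
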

\begin{proof}
  Fix a $\mathbf{k}\in \mathcal{T}_{n+m}$.
  Let
$$
\mathbf{i}=(i_1,\dots  ,i_n):=\mathbf{k}|_n\in \mathcal{T}_n
\quad
\text{ and }\quad
\mathbf{j}=(j_1,\dots  ,j_m):=\sigma ^n\mathbf{k}\in \mathcal{T}_m.
$$

We distinguish two cases:

\noindent\underline{Case 1}
$\xi (\mathbf{k})\leq n-1$.
Then, $\xi (\mathbf{k})=\xi (\mathbf{i})$ and  $1=i_n=j_1=\cdots =j_m$. That is, $\mathbf{j}=\overline{1}^m $.
We distinguish three cases again:
\begin{enumerate}
[{\bf (a)}]
  \item $\mathbf{i}^*\ne \mathbf{i}$ (this implies that $\mathbf{k}^*\ne \mathbf{k}$). Then, $\xi (\mathbf{i})=\xi (\mathbf{k})$. Hence,
\begin{equation}
\label{u52}
\frac{\psi (\mathbf{k}^*)}{\psi (\mathbf{i}^*)}
=
\frac{\psi (\mathbf{i}|_{\xi (\mathbf{i})})\cdot a_{n+m-\xi (\mathbf{i})}}
{\psi (\mathbf{i}|_{\xi (\mathbf{i})})\cdot
a_{n-\xi (\mathbf{i})}}
=
p_{1}^{m}\cdot
\underbrace{
  \frac{\sum\limits_{\ell =0}^{n+m
  -\xi (\mathbf{i})-1 }\left( \frac{p_3}{p_1} \right)^{\ell }}
  {\sum\limits_{\ell =0}^{n
  -\xi (\mathbf{i})-1 }\left( \frac{p_3}{p_1} \right)^{\ell }}}_{r(\mathbf{i},\mathbf{j})}.
\end{equation}
\begin{enumerate}
[{\bf (I)}]
  \item If $m\leq q_0$, then by definition $\mathbf{j}^*=\mathbf{j}$ and then, by Fact \ref{w78}, $\widehat{\psi }(\mathbf{j})=\psi(\mathbf{j})=p _{1}^{m } $  and  $r(\mathbf{i},\mathbf{j})\leq C_5\cdot q_0$ for a $C_5>0$. So, in this case \eqref{u64} holds since in this situation, $\widehat{\psi}(\mathbf{i})=\psi(\mathbf{i}^-0)$ and $\widehat{\psi}(\mathbf{k})=\psi(\mathbf{k}^-0)$.
  \item Assume that $m> q_0$. Then, as we have said, $\mathbf{j}=\overline{1}^m$
  and
  \begin{equation}
  \label{u51}
  \widehat{\psi }(\mathbf{j}) =\psi(\mathbf{j}^-0)=
a_m=
\frac{p_0}{p_1}p _{1}^{m}
\sum_{\ell =0}^{m-1 }
\left( \frac{p_3}{p_1} \right)^{\ell }.
  \end{equation}
  So, in order to verify \eqref{u64} for this case, we have to show that
\begin{equation}
\label{u50}
r(\mathbf{i},\mathbf{j})\leq \text{ Const }\cdot
\sum_{\ell =0}^{m-1 }
\left( \frac{p_3}{p_1} \right)^{\ell }.
\end{equation}
This follows from \eqref{u47}.
\end{enumerate}
  \item $\mathbf{i}^*= \mathbf{i}$ but $\mathbf{k}^*\ne \mathbf{k}$.
Then,
\begin{equation}
\label{u46}
n-\xi (\mathbf{i})<q_0\leq n+m-\xi (\mathbf{i}).
\end{equation}
We write $\xi :=\xi (\mathbf{i})$. Then, $\widehat{\psi}(\mathbf{k})=\psi(\mathbf{k}^-0)$ and $\widehat{\psi}(\mathbf{i})= \psi(\mathbf{i})$,
\begin{equation}
\label{u45}
\frac{\psi (\mathbf{k}^*)}{\psi (\mathbf{i}^*)}
=
\frac{\psi (\mathbf{i}|_{\xi })\frac{p_0}{p_1}p _{1}^{n-\xi+m  }
\sum\limits_{\ell =0}^{n-\xi +m-1 } \left( \frac{p_1}{p_3} \right)^{\ell }}
{\psi (\mathbf{i}|_{\xi })p _{1}^{n-\xi  }}
=
\frac{p_0}{p_1}p _{1}^{m  }
\sum\limits_{\ell =0}^{n-\xi +m-1 } \left( \frac{p_1}{p_3} \right)^{\ell }.
\end{equation}
\begin{enumerate}
[{\bf (I)}]
  \item If $m\leq q_0$, then $\widehat{\psi }(\mathbf{j})=
  p _{1}^{m }$.  Then, $n-\xi +m< 2q_0$. Hence, \eqref{u64} holds in this case.
  \item If $m> q_0$, then $\widehat{\psi }(\mathbf{j})=
  \widehat{\psi }(\overline{1}^m)=a_m=
  \frac{p_0}{p_1}p _{1}^{m}
\sum_{\ell =0}^{m-1 }
\left( \frac{p_3}{p_1} \right)^{\ell }
  $. Using this, \eqref{u45} and the fact that $n-\xi < q_0$ (which holds since $\mathbf{i}^*=\mathbf{i}$) we obtain that \eqref{u64} holds also in this case.
\end{enumerate}
  \item  $\mathbf{i}^*= \mathbf{i}$ and  $\mathbf{k}^*= \mathbf{k}$ (this implies that   $\mathbf{j}^*= \mathbf{j}$ since $\xi (\mathbf{k})\leq n-1$).
  Then,
  $\widehat{\psi }(\mathbf{i})=\psi (\mathbf{i})$,
  $\widehat{\psi }(\mathbf{j})=\psi (\mathbf{j})$ and
  $\widehat{\psi }(\mathbf{k})=\psi (\mathbf{k})$. Hence,
  $$
\frac{\widehat{\psi }(\mathbf{k})}{\widehat{\psi }(\mathbf{i})}
=
\frac{\psi (\mathbf{i}|_{\xi }) p _{1}^{n+m-\xi }}
{\psi (\mathbf{i}|_{\xi }) p _{1}^{n-\xi }}= p _{1}^{m }=\widehat{\psi }(\mathbf{j}).
  $$
This verifies \eqref{u64} for this case.
\end{enumerate}

\noindent\underline{Case 2}
$\xi (\mathbf{k})\geq n$.
In this case $m-\xi (\mathbf{j})=n+m-\xi (\mathbf{k})$. This means that $\mathbf{k}^*\ne \mathbf{k}$ if and only if $\mathbf{j}^*\ne \mathbf{j}$. In particular, $\xi (\mathbf{k})\geq n$ implies that
\begin{equation}
\label{u44}
\mathbf{k}^*=\mathbf{i}\mathbf{j}^*.
\end{equation}
Now we distinguish two cases:
\begin{enumerate}
[{\bf (a)}]
  \item $n\in  D_{\text{Good}}(\mathbf{k}^*) $.
Then, it follows from Lemma \ref{u77} that
$\psi (\mathbf{k}^*)=\psi (\mathbf{i})\psi (\mathbf{j}^*)$. Here we used that by \eqref{u44}, we have $\sigma ^n (\mathbf{k}^*)=
(\sigma ^n\mathbf{k})^*
$. Using that $\psi(\mathbf{i})\leq \widehat{\psi}(\mathbf{i})$ we obtain that \eqref{u64} holds in this case.
\item $n\in  C_{\text{Good}}(\mathbf{k}^*)$.
This means that there is a good block $ [u,v+1]\in \mathcal{G}(\mathbf{k}^*)$ such that
\begin{equation}
\label{u40}
1\leq u\leq n\leq v\leq n+m-1, \quad \text{and} \quad
k_{v+1}=0.
\end{equation}
Using that either $\xi (\mathbf{k}^*)=n+m$ or $k_{\xi (\mathbf{k}^*)}\ne 1 $, we get that
\begin{equation}
\label{u38}
\xi (\mathbf{k}^*)\in D_{\text{Good}}(\mathbf{k}^*).
\end{equation}
Moreover,
\begin{equation}
\label{u37}
u-1<v+1\leq \xi (\mathbf{k}^*),\quad \text{ and } \quad
u-1,v+1 \in D_{\text{Good}}(\mathbf{k}^*)
.
\end{equation}
 Hence, by Lemma \ref{u77} we have
 \begin{equation}
 \label{u36}
 \psi (\mathbf{k}^*)
 =
 \psi(\mathbf{i}|_{u-1})\psi(\overline{1}^{v-u+1}0)
 \psi(\sigma ^{v+1}\mathbf{k}|_{\xi (\mathbf{k})})
 \psi \left(
\left(\overline{1}^{n+m-\xi (\mathbf{k})}\right)^*
  \right),
 \end{equation}
where we remark that the first and last words in \eqref{u36},
$\mathbf{i}|_{u-1}$ and
$(\overline{1}^{n+m-\xi (\mathbf{k})})^*$, respectively, can be the empty words $\flat$. In this case we recall  that $\psi (\flat)=1$.
Using Fact \ref{u49} in the third step we get
\begin{eqnarray}
\label{u35}
\nonumber \psi(\overline{1}^{v-u+1}0)
&=&
\psi(\overline{1}^{n-u+1}\overline{1}^{v-n} 0)
=
a_{(n-u+1)+(v-n+1)}\leq
C_6 a_{n-u+1}a_{v-n+1}
\\
&=&
C_6
\psi (\overline{1}^{n-u}0)
\psi (\overline{1}^{v-n}0)
\leq
C_6
\psi \left(\left( \overline{1}^{n-u+1} \right)^*\right)
\psi (\overline{1}^{v-n}0).
\end{eqnarray}
Now we substitute this into \eqref{u36} and get
\begin{equation}
\label{u33}
\psi (\mathbf{k}^*)
 \leq C_6
 \underbrace{
 \psi(\mathbf{i}|_{u-1})
 \psi \left(\left( \overline{1}^{n-u+1} \right)^*\right)}
 _{\psi (\mathbf{i}^*)}
 \underbrace{
\psi (\overline{1}^{v-n}0)
 \psi(\sigma ^{v+1}\mathbf{k}|_{\xi (\mathbf{k})})
 \psi \left(
\left(\overline{1}^{n+m-\xi (\mathbf{k})}\right)^*
  \right)}
  _{\psi (\mathbf{j}^*)},
\end{equation}
where we used Fact \ref{u42}  and
in the last step we used that $n+m-\xi (\mathbf{k})=m-\xi (\mathbf{j})$. That is, \eqref{u64} holds also in this last possible case.
\end{enumerate}
\end{proof}

We will need the following Claim:

\begin{claim}\label{u29}
  For an arbitrary $\mathbf{i}\in \mathcal{T}_n$ we have
  \begin{equation}
  \label{u28}
  \psi (\mathbf{i}0)\geq p_0\cdot
\widehat{\psi}(\mathbf{i}).
  \end{equation}
\end{claim}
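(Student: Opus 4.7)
The strategy is to give an exact decomposition of $\mathcal{I}_{\mathbf{i}0}$ according to the last coordinate of its elements, and then to compare the resulting sum to $p_0\widehat{\psi}(\mathbf{i})$ through the two possible values of the max in the definition \eqref{a91} of $\widehat{\psi}$.

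First I would dispose of the easy case $i_n\ne 1$. Here \eqref{a91} gives $\widehat{\psi}(\mathbf{i})=\psi(\mathbf{i})$. The map $\pmb{\eta}\mapsto \pmb{\eta}0$ injects $\mathcal{I}_{\mathbf{i}}$ into $\mathcal{I}_{\mathbf{i}0}$ (using that appending $0$ preserves the projection and cannot create a $(0,3)$-pair at the end), so
$$\psi(\mathbf{i}0)\ \geq\ \sum_{\pmb{\eta}\in\mathcal{I}_{\mathbf{i}}} p_{\pmb{\eta}}p_0\ =\ p_0\psi(\mathbf{i})\ =\ p_0\widehat{\psi}(\mathbf{i}).$$

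The substantive case is $i_n=1$. Here I would prove the exact identity
\begin{equation}\label{plan-eq1}
\psi(\mathbf{i}0)\ =\ p_0\psi(\mathbf{i})\ +\ p_3\psi(\mathbf{i}^-0).
\end{equation}
To establish \eqref{plan-eq1}, I partition $\mathcal{I}_{\mathbf{i}0}$ according to whether the last letter is $0$ or $3$ (the last letter cannot be $1$, since by Fact \ref{w92} a coincidence of projections in $\mathcal{A}^{n+1}$ forces the two competing words to end in $\{0,3\}$). The contribution of words ending in $0$ is exactly $\{\pmb{\eta}0:\pmb{\eta}\in\mathcal{I}_{\mathbf{i}}\}$, giving $p_0\psi(\mathbf{i})$. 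For words ending in $3$, I use that $\Pi(\pmb{\omega}3)=\Pi(\pmb{\omega})+3\cdot 3^{-n}$, so $\Pi(\pmb{\omega}3)=\Pi(\mathbf{i}0)$ iff $\Pi(\pmb{\omega})=\Pi(\mathbf{i}^-0)$ (here I use $i_n=1$, so $\Pi(\mathbf{i})=\Pi(\mathbf{i}^-)+3^{-(n-1)}=\Pi(\mathbf{i}^-)+3\cdot 3^{-n}$). Fact \ref{w92} ensures that no other last letter is possible, so the decomposition is exhaustive and disjoint, giving \eqref{plan-eq1}.

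With \eqref{plan-eq1} in hand, the claim reduces to an inequality on two numbers. If $\psi(\mathbf{i})\geq \psi(\mathbf{i}^-0)$, then $\widehat{\psi}(\mathbf{i})=\psi(\mathbf{i})$ and \eqref{plan-eq1} immediately gives $\psi(\mathbf{i}0)\geq p_0\psi(\mathbf{i})$. In the remaining sub-case $\psi(\mathbf{i})<\psi(\mathbf{i}^-0)$ we have $\widehat{\psi}(\mathbf{i})=\psi(\mathbf{i}^-0)$, and \eqref{plan-eq1} reduces the claim to
$$p_0\psi(\mathbf{i})\ \geq\ (p_0-p_3)\psi(\mathbf{i}^-0).$$
If $p_0\leq p_3$ this is trivial; otherwise, I would use the explicit forms from Section \ref{y59} — namely, writing $w:=n-\xi(\mathbf{i})$ and factoring out $\psi(\mathbf{i}|_{\xi(\mathbf{i})})$, the inequality becomes $p_1\geq (p_0-p_3)S_w$ where $S_w=\sum_{\ell=0}^{w-1}(p_3/p_1)^\ell$, which can be handled via $S_w<p_1/(p_1-p_3)$ together with the Principal Assumption $p_3\leq p_1$ (see \eqref{y87}).

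The main obstacle is precisely this last algebraic step: controlling the ratio $\psi(\mathbf{i})/\psi(\mathbf{i}^-0)$ from below in the regime where $\widehat{\psi}(\mathbf{i})=\psi(\mathbf{i}^-0)$. The saving grace is the characterization \eqref{u63}, which tells us that in this regime $w\geq q_0$ and hence $S_w>p_1/p_0$; combined with the bound $S_w<p_1/(p_1-p_3)$, this yields the required inequality.
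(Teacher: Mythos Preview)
Your decomposition identity $\psi(\mathbf{i}0)=p_0\psi(\mathbf{i})+p_3\psi(\mathbf{i}^-0)$ (for $i_n=1$) is correct and gives a cleaner route than the paper's argument, which instead factors through $\xi(\mathbf{i})$ via Lemma~\ref{u77} and reduces to comparing $a_{w+1}$ with either $p_0 p_1^{w}$ or $p_0 a_w$. Your case split and the reduction to $p_1\ge (p_0-p_3)S_w$ in the regime $\widehat{\psi}(\mathbf{i})=\psi(\mathbf{i}^-0)$, $p_0>p_3$, are also fine.

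The gap is the last algebraic step. Neither $S_w>p_1/p_0$ nor $S_w<p_1/(p_1-p_3)$ (nor the two together with $p_3\leq p_1$) forces $p_1\geq (p_0-p_3)S_w$: the second bound only gives $(p_0-p_3)S_w<(p_0-p_3)\,p_1/(p_1-p_3)$, and this is $\leq p_1$ precisely when $p_0\leq p_1$, which is nowhere assumed. In fact the inequality \eqref{u28} as stated is false when $p_0>p_1$: for $(p_0,p_1,p_3)=(0.5,0.3,0.2)$ and $\mathbf{i}=(1,1)$ one has $\psi(110)=0.095$ while $p_0\,\widehat{\psi}(11)=0.5\cdot\psi(10)=0.125$. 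The paper's own proof glosses over exactly the same verification (it never checks $a_{w+1}\geq p_0 a_w$ for $w\geq q_0$, which unravels to your inequality $p_1\geq (p_0-p_3)S_w$). What is actually true, and all that Lemma~\ref{u32} needs, is $\psi(\mathbf{i}0)\geq \min\{p_0,p_1\}\cdot\widehat{\psi}(\mathbf{i})$: from your identity $\psi(\mathbf{i}0)\geq p_0\psi(\mathbf{i})$ is immediate, and $\psi(\mathbf{i}0)\geq p_1\psi(\mathbf{i}^-0)$ follows since $p_0\psi(\mathbf{i})/\psi(\mathbf{i}^-0)=p_1/S_w> p_1-p_3$.
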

\begin{proof}
  Fix an $\mathbf{i}\in\mathcal{T}_n$.
  \begin{enumerate}
  [{\bf (a)}]
    \item Assume that $i_n\ne 1$.
  Then, $\mathbf{i}^*=\mathbf{i}$ and $n\in D_{\text{Good}}(\mathbf{i}0)$. Then, by Lemma \ref{u77}, we have
$\psi (\mathbf{i}0)=\psi (\mathbf{i})\psi (0)=p_0 \widehat{\psi}(\mathbf{i}) $. So, \eqref{u28} holds in this case.
    \item Assume that $i_n= 1$. Then, $\xi :=\xi (\mathbf{i})\leq n-1$
    and $\xi \in D_{\text{Good}}(\mathbf{i}0)$. Then, by Lemma \ref{u77} we have
    \begin{equation}
    \label{u27}
    \psi (\mathbf{i}0)=
    \psi(\mathbf{i}|_{\xi })
    \psi \left( \sigma ^{\xi }(\mathbf{i}0) \right)
    =
    \psi(\mathbf{i}|_{\xi })
    \psi \left( \overline{1}^{n-\xi }0 \right)
    =
    \psi(\mathbf{i}|_{\xi })\cdot a_{n-\xi +1}.
    \end{equation}
    On the other hand, by Fact \ref{u42}
\begin{equation}
\label{u26}
\widehat{\psi}(\mathbf{i})=
\left\{
\begin{array}{ll}
  \psi(\mathbf{i}|_{\xi }) p _{1}^{n-\xi }
,&
\hbox{if $n-\xi<q_0 $;}
\\
\psi(\mathbf{i}|_{\xi }) a_{n-\xi }
,&
\hbox{if $n-\xi \geq q_0$.}
\end{array}
\right.
\end{equation}
Putting together \eqref{u27} and \eqref{u26} we get that \eqref{u28} holds.
  \end{enumerate}
\end{proof}

\begin{claim}\label{u24}
  Let $\mathbf{j}\in \mathcal{T}_m$ for an $m\geq 1$. Then, we have
  \begin{equation}
  \label{u22}
 \widehat{\psi}(1\mathbf{j})\geq
 p_1 \widehat{\psi}(\mathbf{j}).
  \end{equation}
\end{claim}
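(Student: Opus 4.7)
The plan is to first establish the auxiliary ``prefix'' inequality
\[
\psi(1\mathbf{k})\geq p_1\,\psi(\mathbf{k})\qquad\text{for every }\mathbf{k}\in\mathcal{T}_n,\ n\geq 0,
\]
and then derive Claim \ref{u24} from it by a short case split. Note that prepending the letter $1$ to any word in $\mathcal{T}_n$ cannot create a forbidden factor $(0,3)$, so $1\mathbf{k}\in\mathcal{T}_{n+1}$ automatically. The auxiliary inequality will follow from an elementary injection $\mathcal{I}_{\mathbf{k}}\hookrightarrow \mathcal{I}_{1\mathbf{k}}$: given $\pmb{\omega}\in\mathcal{I}_{\mathbf{k}}$, set $\pmb{\tau}:=1\pmb{\omega}$; then $|\pmb{\tau}|=n+1=|1\mathbf{k}|$, and by the definition \eqref{y96} of $\Pi$ we compute
\[
\Pi(\pmb{\tau})=1+\tfrac{1}{3}\Pi(\pmb{\omega})=1+\tfrac{1}{3}\Pi(\mathbf{k})=\Pi(1\mathbf{k}),
\]
so by the characterisation \eqref{y03} of $\mathcal{I}_{1\mathbf{k}}$ we have $1\pmb{\omega}\in \mathcal{I}_{1\mathbf{k}}$. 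Summing gives
\[
\psi(1\mathbf{k})=\sum_{\pmb{\tau}\in\mathcal{I}_{1\mathbf{k}}}p_{\pmb{\tau}}
\geq\sum_{\pmb{\omega}\in\mathcal{I}_{\mathbf{k}}}p_{1\pmb{\omega}}
=p_1\sum_{\pmb{\omega}\in\mathcal{I}_{\mathbf{k}}}p_{\pmb{\omega}}=p_1\,\psi(\mathbf{k}).
\]

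Next, I would split on the value of $j_m$. If $j_m\ne 1$, then the last letter of $1\mathbf{j}$ is also $\ne 1$, so by \eqref{a91} we have $\widehat{\psi}(\mathbf{j})=\psi(\mathbf{j})$ and $\widehat{\psi}(1\mathbf{j})=\psi(1\mathbf{j})$, and the auxiliary inequality applied to $\mathbf{k}=\mathbf{j}$ gives exactly $\widehat{\psi}(1\mathbf{j})\geq p_1\widehat{\psi}(\mathbf{j})$. If $j_m=1$, then $(1\mathbf{j})^-=1\mathbf{j}^-$ and the last letter of $1\mathbf{j}$ equals $1$, so by \eqref{a91}
\[
\widehat{\psi}(1\mathbf{j})=\max\{\psi(1\mathbf{j}),\,\psi(1\mathbf{j}^-0)\},\qquad
\widehat{\psi}(\mathbf{j})=\max\{\psi(\mathbf{j}),\,\psi(\mathbf{j}^-0)\}.
\]
Observing that $\mathbf{j}^-0\in\mathcal{T}_m$ (changing the last digit of a $\mathcal{T}$-word to $0$ creates no forbidden $(0,3)$), I can apply the auxiliary inequality to both $\mathbf{k}=\mathbf{j}$ and $\mathbf{k}=\mathbf{j}^-0$, obtaining
\[
\psi(1\mathbf{j})\geq p_1\psi(\mathbf{j}),\qquad
\psi(1\mathbf{j}^-0)\geq p_1\psi(\mathbf{j}^-0).
\]
Taking the maximum on each side yields $\widehat{\psi}(1\mathbf{j})\geq p_1\widehat{\psi}(\mathbf{j})$, as required.

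There is no serious obstacle here: the whole argument reduces to the one-line observation that $\pmb{\omega}\mapsto 1\pmb{\omega}$ embeds $\mathcal{I}_{\mathbf{k}}$ into $\mathcal{I}_{1\mathbf{k}}$, plus a careful bookkeeping of \eqref{a91} in the case $j_m=1$. The only point that deserves a brief verification is that the auxiliary inequality is applicable to $\mathbf{j}^-0$, which is just the remark that truncation and substitution of the trailing letter preserve membership in $\mathcal{T}$.
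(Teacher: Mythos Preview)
Your proof is correct and rests on the same core observation as the paper's, namely that $\pmb{\omega}\mapsto 1\pmb{\omega}$ injects $\mathcal{I}_{\mathbf{k}}$ into $\mathcal{I}_{1\mathbf{k}}$, giving $\psi(1\mathbf{k})\geq p_1\psi(\mathbf{k})$. The organisation, however, is genuinely tidier than the paper's. The paper works via the $*$-operation and the identity $\widehat{\psi}(\mathbf{i})=\psi(\mathbf{i}^*)$ from \eqref{w75}: it first shows $(1\mathbf{j})^*=1\mathbf{j}^*$ and then applies the injection once, to $\mathbf{j}^*$. Establishing $(1\mathbf{j})^*=1\mathbf{j}^*$ forces a separate treatment of the case $\mathbf{j}=\overline{1}^m$ (itself split into $m<q_0-1$, $m=q_0-1$, $m\geq q_0$) before handling the generic case via $\xi(\mathbf{k})$. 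Your approach bypasses the $*$-machinery entirely: by applying the injection to \emph{both} candidates $\mathbf{j}$ and $\mathbf{j}^-0$ and then taking the maximum, you read off the result directly from the definition \eqref{a91} of $\widehat{\psi}$. This is shorter, avoids the $q_0$ case analysis, and uses only $\Pi$ and \eqref{a91}; the price is one extra application of the injection, which is free.
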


\begin{proof}
 First we consider the case when $\mathbf{j}=\overline{1}^m$. This case can be subdivided into the three cases when $m<q_0-1$, $m=q_0-1$ and $m\geq q_0$. Using formula \eqref{u43}, one can easily point out in each of these three cases that \eqref{u22} holds. So, from now on we may assume that there exists a $p\in\left\{ 1,\dots  ,m \right\}$
 such that $j_p\ne 1$. Using this, one can easily see  in the same way as in Case 2 of the proof of Lemma \ref{u65}
 that
\begin{equation}
\label{u23}
\left( 1\mathbf{j} \right)^*=1 \mathbf{j} ^*.
\end{equation}
Observe that
\begin{equation}
\label{u20}
\pmb{\eta}
 \in \mathcal{I}_{\mathbf{j}^*} \Longrightarrow
1\pmb{\eta}
 \in \mathcal{I}_{1\mathbf{j}^*}.
\end{equation}
Namely,
\begin{multline}
\label{}
\pmb{\eta} \in \mathcal{I}_{\mathbf{j}^*} \Longleftrightarrow
\Pi (\pmb{\eta})=\Pi (\mathbf{j}^*)
\Longrightarrow
S_1(\Pi (\pmb{\eta}))=S_1(\Pi (\mathbf{j}^*))
\\
\Longrightarrow
\Pi (1\pmb{\eta})=\Pi (1\mathbf{j}^*)
\Longrightarrow
1\pmb{\eta}
 \in \mathcal{I}_{1\mathbf{j}^*}.
\end{multline}

By definition
\begin{equation}
\label{u21}
\psi(1\mathbf{j}^*)=\sum_{\pmb{\omega}\in \mathcal{I}_{1\mathbf{j}^*}}
p_{\pmb{\omega}}\geq
\sum_{\pmb{\eta}\in \mathcal{I}_{\mathbf{j}^*}}p_{1 \pmb{\eta}}
= p_1 \sum_{\pmb{\eta}\in \mathcal{I}_{\mathbf{j}^*}}p_{1 \pmb{\eta}}
=p_1 \psi(\mathbf{j}^*).
\end{equation}
\end{proof}

\begin{lemma}\label{u32}
  There exist a $C_{11}>0$ such that
  \begin{equation}
  \label{u31}
 \widehat{\psi}(\mathbf{i}01\mathbf{j}) \geq
 C_{11}\widehat{\psi}(\mathbf{i})\widehat{\psi}(\mathbf{j}), \quad
\forall  \mathbf{i},\mathbf{j}\in \Sigma _{A}^{*}.
  \end{equation}
\end{lemma}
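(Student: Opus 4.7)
The idea is to exploit the $0$ in the bridge $01$ as a clean splitting point for $\psi$. Let $z:=|\mathbf{i}|+1$ and set $\mathbf{w}:=(\mathbf{i}01\mathbf{j})^*\in \Sigma_A^*$, so that $\widehat{\psi}(\mathbf{i}01\mathbf{j})=\psi(\mathbf{w})$. I will apply Lemma \ref{u77} to $\mathbf{w}$ at position $z$. This requires $z\in D_{\text{Good}}(\mathbf{w})$, which is automatic: the letter of $\mathbf{w}$ at position $z$ is $0$, and since every good block has the shape $(1,\dots,1,0)$, either $\mathbf{i}$ ends in a maximal run of $1$s (so $z$ is the right endpoint of a good block, i.e., $z\in B_{\text{Good}}(\mathbf{w})$), or $\mathbf{i}$ ends in $0$ or $3$ (so $z$ lies in no good block, i.e., $z\in A_{\text{Good}}(\mathbf{w})$). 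In either case Lemma \ref{u77} yields
$$
\widehat{\psi}(\mathbf{i}01\mathbf{j})=\psi(\mathbf{w})=\psi(\mathbf{i}0)\cdot \psi(\sigma^z\mathbf{w}).
$$

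Analysing the trailing-$1$ count of $\mathbf{i}01\mathbf{j}$ produces three scenarios for $\sigma^z\mathbf{w}$: \textbf{(a)} if $\mathbf{j}^*=\mathbf{j}$ and $(\mathbf{i}01\mathbf{j})^*=\mathbf{i}01\mathbf{j}$, then $\sigma^z\mathbf{w}=1\mathbf{j}$; \textbf{(b)} if $\mathbf{j}^*=\mathbf{j}^-0$ (so $\mathbf{j}$ ends in at least $q_0$ ones), then $\mathbf{i}01\mathbf{j}$ also ends in at least $q_0$ ones, forcing $(\mathbf{i}01\mathbf{j})^*=\mathbf{i}01\mathbf{j}^-0$ and $\sigma^z\mathbf{w}=1\mathbf{j}^-0$; \textbf{(c)} the borderline case $\mathbf{j}=\overline{1}^{q_0-1}$ (only relevant when $q_0<\infty$), where $\mathbf{j}^*=\mathbf{j}$ but the bridge's $1$ lifts the trailing-$1$ count of $\mathbf{i}01\mathbf{j}$ to exactly $q_0$, giving $(\mathbf{i}01\mathbf{j})^*=\mathbf{i}0\overline{1}^{q_0-1}0$ and $\sigma^z\mathbf{w}=\overline{1}^{q_0-1}0$.

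For the left factor, Claim \ref{u29} supplies $\psi(\mathbf{i}0)\ge p_0\,\widehat{\psi}(\mathbf{i})$ unconditionally. For the right factor in cases \textbf{(a)} and \textbf{(b)}, I use the inclusion $\mathcal{I}_{1\mathbf{u}}\supseteq\{1\pmb{\eta}:\pmb{\eta}\in\mathcal{I}_{\mathbf{u}}\}$ that is established inside the proof of Claim \ref{u24} to deduce $\psi(1\mathbf{u})\ge p_1\,\psi(\mathbf{u})$. Applying this with $\mathbf{u}=\mathbf{j}$ in case \textbf{(a)} and with $\mathbf{u}=\mathbf{j}^-0$ in case \textbf{(b)}, and noting that $\psi(\mathbf{u})=\widehat{\psi}(\mathbf{j})$ in each situation, yields $\psi(\sigma^z\mathbf{w})\ge p_1\,\widehat{\psi}(\mathbf{j})$. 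In the borderline case \textbf{(c)} I compute directly from \eqref{u43}: $\psi(\sigma^z\mathbf{w})=\psi(\overline{1}^{q_0-1}0)=a_{q_0}\ge p_0\, p_1^{q_0-1}=p_0\,\widehat{\psi}(\mathbf{j})$, where the trivial inequality is obtained by retaining only the $\ell=0$ term in the defining sum of $a_{q_0}$.

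Assembling the two factors proves the lemma with $C_{11}:=\min\{p_0p_1,\,p_0^2\}$. The main obstacle is case \textbf{(c)}: the $*$ operation is triggered on $\mathbf{i}01\mathbf{j}$ but not on $\mathbf{j}$ alone, so the expected product $\psi(1\mathbf{j})\widehat{\psi}(\mathbf{j})$ is replaced by $a_{q_0}\widehat{\psi}(\mathbf{j})$; this is resolved by the crude bound $a_w\ge p_0 p_1^{w-1}$, which holds for every $w\ge 1$ directly from the definition in \eqref{u43} and so controls the new factor without any delicate use of the defining inequality of $q_0\in \mathfrak{A}$.
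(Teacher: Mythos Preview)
Your proof is correct and follows essentially the same route as the paper: split $\psi\big((\mathbf{i}01\mathbf{j})^*\big)$ at position $|\mathbf{i}|+1$ via Lemma~\ref{u77}, bound the left factor with Claim~\ref{u29}, and bound the right factor using the content of Claim~\ref{u24}. The paper packages the argument more compactly by first recording the structural identity $(\mathbf{i}01\mathbf{j})^*=\mathbf{i}0(1\mathbf{j})^*$ (formula~\eqref{u30}, which follows from Case~2 of the proof of Lemma~\ref{u65}) and then invoking Claim~\ref{u24} directly to get $\psi((1\mathbf{j})^*)=\widehat{\psi}(1\mathbf{j})\ge p_1\widehat{\psi}(\mathbf{j})$ in one stroke; your cases \textbf{(a)}, \textbf{(b)}, \textbf{(c)} simply unpack this identity and reprove pieces of Claim~\ref{u24} by hand. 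In particular, the ``borderline case'' you flag as the main obstacle is already absorbed into Claim~\ref{u24} (whose proof begins precisely with the case $\mathbf{j}=\overline{1}^m$), so no separate treatment is needed, and the constant can be taken to be $C_{11}=p_0p_1$ rather than $\min\{p_0p_1,p_0^2\}$.
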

\begin{proof}
Given $\mathbf{i}\in\mathcal{T}_n$ and $\mathbf{j}\in\mathcal{T}_m$. Clearly, $\widetilde{\mathbf{k}}:=\mathbf{i}01\mathbf{j}\in\mathcal{T}_{n+m+2}$. Using that $\xi (\widetilde{\mathbf{k}})\leq n+1$. Hence, by the same argument that we used in Case 2, part of the proof of Lemma \ref{u65}, we obtain that
\begin{equation}
\label{u30}
\widetilde{\mathbf{k}}^*=\mathbf{i}0(1\mathbf{j})^*.
\end{equation}
Observe that $n+1\in D_{\text{Good}}(\widetilde{\mathbf{k}}^*)$.
Hence, first using Lemma \ref{u77} and then using
Claims \eqref{u29} and \eqref{u24} we get that
\begin{equation}
\label{u25}
\widehat{\psi}(\widetilde{\mathbf{k}})
=
\psi(\widetilde{\mathbf{k}}^*)=
\psi (\mathbf{i}0)\psi \left( \left( 1\mathbf{j} \right)^* \right)
\geq p_0\widehat{\psi}(\mathbf{i})
p_1 \widehat{\psi}(\mathbf{j}).
\end{equation}
\end{proof}
Putting together Lemmas \ref{u65} and \ref{u32} we obtain that
\begin{corollary}\label{u18}
 For every $t>0$, $\widehat{\psi }^t$ is a quasi-multiplicative potential in the sense of Definition \ref{u19}.
\end{corollary}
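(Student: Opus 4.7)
The plan is to verify the two conditions (a) and (b) of Definition \ref{u19} for the potential $\widehat{\psi}^t$ directly from Lemma \ref{u65} and Lemma \ref{u32}, which have been proved immediately above this corollary. Both inequalities survive exponentiation by $t>0$, with the constants just raised to an appropriate power.

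For condition (a), I would first exhibit a non-empty word $\pmb{\ell}\in \Sigma_A^*$ with $\widehat{\psi}^t(\pmb{\ell})>0$. The natural choice is $\pmb{\ell}:=(0)$: from \eqref{a91} we have $\widehat{\psi}(0)=\psi(0)=p_0>0$, so $\widehat{\psi}^t(0)=p_0^t>0$. For the sub-multiplicativity estimate, Lemma \ref{u65} applied with $n=|\mathbf{i}|$ and $m=|\mathbf{j}|$ yields
\[
\widehat{\psi}(\mathbf{i}\mathbf{j})\;\leq\; C_{10}\,\widehat{\psi}(\mathbf{i})\,\widehat{\psi}(\mathbf{j}),\qquad \forall\,\mathbf{i}\mathbf{j}\in\Sigma_A^*,
\]
and raising both sides to the power $t$ gives condition (a) with $C_1:=C_{10}^{\,t}$.

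For condition (b), I would take the uniform choice $z:=2$ and, for every pair $\mathbf{i},\mathbf{j}\in \Sigma_A^*$, set $\mathbf{k}:=(0,1)\in \mathcal{T}_2$. I must check that $\mathbf{i}\mathbf{k}\mathbf{j}=\mathbf{i}\,01\,\mathbf{j}\in \Sigma_A^*$: the only adjacencies created by the concatenation are $(\mathbf{i}_{|\mathbf{i}|},0)$, $(0,1)$, and $(1,\mathbf{j}_1)$, none of which is the forbidden pair $(0,3)$; since $\mathbf{i}$ and $\mathbf{j}$ themselves already belong to $\Sigma_A^*$, so does the full word. Lemma \ref{u32} then states
\[
\widehat{\psi}(\mathbf{i}\,01\,\mathbf{j})\;\geq\; C_{11}\,\widehat{\psi}(\mathbf{i})\,\widehat{\psi}(\mathbf{j}),
\]
and again raising to the power $t$ produces condition (b) with $C_2:=C_{11}^{-t}$.

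There is no serious obstacle here; the entire work has been absorbed into Lemmas \ref{u65} and \ref{u32}. The only trivial verification is that the single interpolating word $\mathbf{k}=(0,1)$ does not depend on $\mathbf{i}$ or $\mathbf{j}$ and keeps us inside $\Sigma_A^*$, so that $z=2$ suffices uniformly, which is exactly what Definition \ref{u19}(b) requires.
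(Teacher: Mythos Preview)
Your proposal is correct and matches the paper's approach exactly: the paper's proof consists of the single line ``Putting together Lemmas \ref{u65} and \ref{u32}'', and you have simply spelled out the routine details of how those two lemmas yield conditions (a) and (b) of Definition~\ref{u19} after raising to the power~$t$. The choice of connecting word $\mathbf{k}=(0,1)$ and the verification that it keeps the concatenation in $\Sigma_A^*$ are the natural reading of Lemma~\ref{u32}.
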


This proves that Property-\ref{w98} holds.

\bigskip

\textbf{Acknowledgements} We would like to thank Aljoscha Niemann for his numerous valuable comments, and for calling our attention to the fact that putting together \cite[Corollary 1.12]{KNZ}
and our Theorem \ref{x70} yields an expression for the $L^q$-spectrum over $(0,1)$ for the measures studied in this paper.

\end{document}